\newtheorem{theorem}{Theorem}[section]
\newtheorem{lemma}[theorem]{Lemma}
\newtheorem{proposition}[theorem]{Proposition}
\theoremstyle{definition}
\newtheorem{cla}{Step}
\newcommand{\ep}{\varepsilon}
\newcommand{\complement}{\mathsf{c}}
\newcommand{\R}{\mathbb{R}}
\newcommand{\N}{\mathbb{N}}
\newcommand{\D}{\mathcal{D}}
\newcommand{\de}{\,\mathrm{d}}
\numberwithin{equation}{section}
\title[Sign-changing bubble-tower solutions]{Sign-changing bubble-tower solutions to fractional semilinear elliptic problems}
\author[Gabriele Cora and Alessandro Iacopetti]{}
\subjclass[2010]{Primary: 35J61, 35B40; Secondary: 35B44, 35B05} 
\keywords{Fractional semilinear elliptic equations, critical exponent, nodal regions, sign-changing radial solutions, asymptotic behavior}
\email{gabriele.cora@unito.it}
\email{iacopetti@mat.uniroma1.it}
\thanks{\emph{Acknowledgements.} Research partially supported by the project ERC Advanced Grant 2013 n.~339958 Complex Patterns for Strongly Interacting Dynamical Systems COMPAT, and by Gruppo Nazionale per l'Analisi Matematica, la Pro\-ba\-bi\-li\-t\`a e le loro Applicazioni (GNAMPA) of the Istituto Nazionale di Alta Matematica (INdAM)}
\date{}
\begin{document}

\maketitle

\centerline{\scshape Gabriele Cora}
\medskip
{\footnotesize
 \centerline{Dipartimento di Matematica ``G. Peano"}
   \centerline{Universit\`a di Torino }
   \centerline{ via Carlo Alberto 10 -- 10123 Torino, Italy}
}

\medskip

\centerline{\scshape Alessandro Iacopetti}
\medskip
{\footnotesize
 \centerline{ Dipartimento di Matematica}
   \centerline{Universit\`a di Roma ``La Sapienza"}
   \centerline{P.le Aldo Moro 5 -- 00185 Roma, Italy}

}

\begin{abstract}
We study the asymptotic and qualitative properties of least energy radial sign-changing solutions to fractional semilinear elliptic problems of the form
\[
\begin{cases}
(-\Delta)^s u = |u|^{2^*_s-2-\varepsilon}u &\text{in } B_R, \\
u = 0 &\text{in }\R^n \setminus B_R,
\end{cases}
\]
where $s \in (0,1)$, $(-\Delta)^s$ is the s-Laplacian, $B_R$ is a ball of $\R^n$, $2^*_s := \frac{2n}{n-2s}$ is the critical Sobolev exponent and $\varepsilon>0$ is a small parameter.
We prove that such solutions have the limit profile of a ``tower of bubbles'', as $ \varepsilon \to 0^+$, i.e. the positive and negative parts concentrate at the same point with different concentration speeds. Moreover, we provide information about the nodal set of these solutions.
\end{abstract}

\begin{section}{Introduction}
Let $s \in (0,1)$, let $n \in \N$ be such that $n>2s$ and let $\Omega \subset \R^n$ be a bounded smooth domain. Consider the following non-local elliptic problem
\begin{equation}\label{fracsemilinear}
\begin{cases}
(-\Delta)^s u = f(u) &\text{in }\Omega, \\
u = 0 &\text{in }\R^n \setminus \Omega,
\end{cases}
\end{equation}
where $(-\Delta)^s$ is the s-Laplacian, $f(u)=|u|^{2^*_s-2-\varepsilon}u$ or $f(u)=\varepsilon u + |u|^{2^*_s-2}u$ for $n>6s$,  $\varepsilon>0$ is a small parameter and
$2^*_s := \frac{2n}{n-2s}$ is the critical exponent for the fractional Sobolev embedding.

\medskip

In the recent paper \cite{ChoiKimLee} the authors studied the asymptotic properties of least energy positive solutions to Problem \eqref{fracsemilinear}, i.e. positive solutions $u_\ep$ such that $\|u_\ep\|^2_s \to S_s^{\frac{n}{2s}}$, as $\varepsilon \to 0^+$, where $\|\cdot\|_s$ is the standard seminorm in $H^s(\R^n)$ and $S_s$ is the best fractional Sobolev constant. They proved, in the case of the spectral fractional Laplacian, that such solutions concentrate and blow-up at some point $x_0 \in \Omega$, providing also information about the blow-up speed with respect to $\varepsilon$. Their result is hence the fractional counterpart of the classical results of Han and Rey (see \cite{Han, Rey}) for the Laplacian.

Motivated by that, it is natural to ask whether is possible or not to extend to the fractional framework analogous results about the asymptotic behavior of least energy sign-changing solutions to almost critical and critical semilinear elliptic problems for the Laplacian (see \cite{BEP2,BEP3,IacPac1,IacVair1, IacVair2,PistWeth}). 

At first glance the answer seems to be positive, but differently from the case of constant-sign solutions, several difficulties arise when studying the qualitative properties of sign-changing solutions. Indeed, in view of the non-local interactions between the nodal components, we cannot take benefit from the fractional moving plane method (see \cite{ChenLiLi}), and the strong maximum principle does not work properly (see \cite[Sect. 1]{CorIac}). Moreover, when considering least energy sign-changing solutions, i.e. sign-changing solutions $u_\ep$ to \eqref{fracsemilinear} such that $\|u_\ep\|_s^2\to 2S_s^{\frac{n}{2s}}$, as $\varepsilon \to 0^+$, we cannot establish by mere energetic arguments, neither by a Morse-index approach, the number of  nodal components. In the local case it is well known that they possess exactly two nodal regions, since each nodal component carries the energy $S_1^{\frac{n}{2}}$ (see \cite{BEP2,BEP3}).
 In the fractional case we can only say that both the positive and the negative part globally carry the same energy $S_s^{\frac{n}{2s}}$, when $\varepsilon \to 0^+$, but this does not hold true in general for each individual nodal component and causes many troubles when performing the asymptotic analysis.
\medskip
   
In our contribution \cite{CorIac} we tackled the case of least energy radial sign-changing solutions to Problem \eqref{fracsemilinear} in a ball, when $f(u)=\ep u + |u|^{2_s^*-2}u$ is the critical nonlinearity and $n>6s$. In the spirit of the pioneering papers \cite{FrLe1,FrLe2}, we showed that these solutions change sign at most twice and exactly once when $s$ is close to $1$. Moreover, when $s>\frac{1}{2}$, we proved that they behave like a tower of two bubbles as $\ep\to 0^+$, namely, the positive and the negative part blow-up and concentrate at the same point (which is the center of the ball) with different speeds. Nevertheless, we needed to assume that these solutions change sign exactly once to determine which one between the positive and the negative part blew-up faster (see  \cite[Sect.1]{CorIac}). 

We point out that for $2s<n\leq 6s$, according to a classical result of Atkinson, Brezis, and Peletier (see \cite{ABP}), radial sign-changing solutions in a ball may not exist when $\ep>0$ is close to zero, while they do exist for $n>6s$ (see \cite[Theorem 3.7]{CorIac}).
\medskip

In this paper we consider slightly subcritical nonlinearities $f(u)=|u|^{2_s^*-2-\ep}u$, and we extend the results of \cite{CorIac} to all $s \in (0,1)$ without any extra assumption. The same proofs work also in the case of critical nonlinearities with minor modifications. The main result of our paper is the following:
\begin{theorem}\label{mainteo}
Let $s \in (0,1)$ and let $n>2s$. Let $(u_\ep)_\ep$ be a family of least energy radial sign-changing solutions to \begin{equation}\label{fracradsubcrit}
\begin{cases}
(-\Delta)^s u = |u|^{2^*_s-2-\varepsilon}u &\text{in } B_R, \\
u = 0 &\text{in }\R^n \setminus B_R,
\end{cases}
\end{equation}
where $B_R$ is the euclidean ball of radius $R>0$ centered at the origin. Assume without loss of generality that $u_\ep(0)> 0$ and set $M_\ep^\pm:=|u_\ep^\pm|_\infty$. 
Then, as $\ep \to 0^+$ it holds that:
\begin{itemize}
\item[(i)] $M_\ep^\pm \to + \infty$,
\item[(ii)] $\displaystyle \frac{M_\ep^+}{M_\ep^-} \to +\infty$,
\item[(iii)] $|x_\ep| \to 0$,  where $x_\ep \in B_R$ is any point such that $u_\ep(x_\ep)=M_\ep^+$,
\item[(iv)] the rescaled function
\[
\tilde u_{\ep}(x) := \frac{1}{M_{\ep}^+}u_{\ep}\left( \frac{x}{({M_{\ep}^+})^{\frac{2}{n-2s}-\frac{\ep}{2s}}}\right), \quad x \in \R^n,
\]
 converges in $C^{0,\alpha}_{loc}(\R^n)$, for some $\alpha \in (0,1)$, to the fractional standard bubble $U_{0,\mu}$ in $\R^n$ centered at the origin and such that $U_{0,\mu}(0)=1$,
\item[(v)] if $s \in (\frac{1}{2}, 1)$ then $|y_\ep| \to 0$,  where $y_\ep \in B_R$ is any point such that $|u_\ep(y_\ep)|=M_\ep^-$.
\end{itemize}
\end{theorem}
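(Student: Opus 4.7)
The plan is to follow the classical Han--Rey blow-up scheme adapted to the nonlocal setting, using the sharp interior/boundary fractional regularity of Caffarelli--Silvestre and Ros-Oton--Serra, the classification of positive entire solutions to $(-\Delta)^s u = u^{2^*_s-1}$ in $\R^n$, and the fractional concentration--compactness principle. The energy assumption $\|u_\ep\|_s^2\to 2S_s^{n/2s}$, together with the non-attainment of $S_s$ on the bounded domain $B_R$, forces blow-up of both $u_\ep^+$ and $u_\ep^-$, which is step (i): if, say, $(M_\ep^+)$ stayed bounded, uniform $C^{0,\alpha}$ estimates would yield a nontrivial strong limit solving the critical problem in $B_R$, which is excluded by the Pohozaev nonexistence.

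\medskip

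\textbf{Blow-up profile and concentration point (steps (iv) and (iii)).} Let $x_\ep$ maximize $u_\ep^+$ and set $\mu_\ep:=(M_\ep^+)^{\frac{2}{n-2s}-\frac{\ep}{2s}}$. Then
\[
\tilde u_\ep(x):=\frac{1}{M_\ep^+}\,u_\ep\!\left(x_\ep+\tfrac{x}{\mu_\ep}\right)
\]
solves $(-\Delta)^s\tilde u_\ep=|\tilde u_\ep|^{2_s^*-2-\ep}\tilde u_\ep$ on $\mu_\ep(B_R-x_\ep)$, with $\tilde u_\ep(0)=\|\tilde u_\ep\|_\infty=1$. Fractional Hölder estimates extract a $C^{0,\alpha}_{\mathrm{loc}}$ limit $\tilde u$ satisfying the critical equation on either a half-space or all of $\R^n$. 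The half-space case is ruled out by checking that $\mu_\ep\,\mathrm{dist}(x_\ep,\partial B_R)\to\infty$; by radial symmetry, this reduces to $|x_\ep|\to 0$ (step (iii)), since otherwise a whole sphere of maxima at distance $r_0>0$ would contradict the single-peak bubble profile and the critical energy threshold. The classification of positive entire solutions then identifies $\tilde u$ as the standard bubble $U_{0,\mu}$ normalized at the origin, yielding step (iv).

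\medskip

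\textbf{Separation of rates (step (ii)) --- the main obstacle.} This is where the proof departs most significantly from the local setting. Arguing by contradiction, suppose $M_\ep^-\geq c\,M_\ep^+$ along a subsequence. Then the rescaled negative part $\tilde u_\ep^-$ satisfies $\|\tilde u_\ep^-\|_\infty\geq c>0$, so the limit $\tilde u$ could a priori carry a nontrivial negative part. However, by the previous step $\tilde u$ coincides with $U_{0,\mu}>0$ on the connected component of $\{\tilde u>0\}$ containing the origin, and the unique continuation principle for $(-\Delta)^s$, available for every $s\in(0,1)$, then yields $\tilde u\equiv U_{0,\mu}$ globally in $\R^n$, contradicting $\tilde u^-\not\equiv 0$. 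A Brezis--Lieb-type decomposition provides a backup: the positive bubble already accounts for the energy $S_s^{n/2s}$, so a simultaneous negative concentration at the same scale would push $\|u_\ep\|_s^2$ strictly above $2S_s^{n/2s}$, violating the least-energy assumption. The delicate point is to control the nonlocal cross term between $u_\ep^+$ and $u_\ep^-$ in this decomposition, as emphasized in the introduction.

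\medskip

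\textbf{Negative peak (step (v)).} Finally, one repeats the blow-up centered at $y_\ep$ with scale $\nu_\ep:=(M_\ep^-)^{\frac{2}{n-2s}-\frac{\ep}{2s}}$. The radial nodal structure from \cite{CorIac} (positive inner ball plus negative outer annulus) together with the fact that $u_\ep^-$ carries the remaining energy $S_s^{n/2s}$ produces a bubble for the negative part. The restriction $s>1/2$ enters through sharper boundary estimates for the fractional Dirichlet problem, excluding concentration near $\partial B_R$ (the boundary decay being too strong to sustain $S_s^{n/2s}$). The only alternative, concentration on an interior sphere $|x|=r_0>0$, is ruled out as in step (iii) by incompatibility with a single-peak bubble profile, leaving $|y_\ep|\to 0$.
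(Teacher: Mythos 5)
Your strategy for (i), (iii), and (iv) is broadly in the spirit of the paper's proof (uniform fractional H\"older estimates from Ros-Oton--Serra, a blow-up limit solving the critical equation in $\R^n$, Pohozaev-type nonexistence to exclude a bounded nontrivial limit, classification of positive entire solutions). One cosmetic difference: the paper rescales about the origin rather than about $x_\ep$, using the radial structure directly, whereas your centering at $x_\ep$ forces you to first exclude a half-space limit, which you do by an argument that presupposes the bubble profile. But the two genuinely problematic steps are (ii) and (v).

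\textbf{Step (ii) does not go through as written.} You assume $M_\ep^-\ge cM_\ep^+$ and conclude that the blow-up limit $\tilde u$ ``could a priori carry a nontrivial negative part.'' That inference is unjustified: the negative extremum of $u_\ep$ is achieved at a point $y_\ep$ that, after rescaling by $\mu_\ep=(M_\ep^+)^{\frac{2}{n-2s}-\frac{\ep}{2s}}$, may escape to spatial infinity, in which case the $C^{0,\alpha}_{\mathrm{loc}}$ limit $\tilde u$ is nonnegative and your unique-continuation argument has nothing to contradict. Indeed, the paper \emph{proves} (Lemma \ref{L:maxasynt}(i)) precisely that the first rescaled nodal radius $(M_\ep^+)^{\beta_\ep}r_\ep^1\to+\infty$, i.e.\ the negative region does run off to infinity in the rescaled frame. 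The actual argument in the paper is then quantitative and elementary: if $\hat M_\ep/M_\ep^+\not\to 0$ along some subsequence, the uniform H\"older estimate (Proposition~\ref{P:unifholderineq}) forces $|\tilde u_\ep|$ to be bounded below on an annulus of fixed thickness around $|x|=(M_\ep^+)^{\beta_\ep}|x_\ep|\to\infty$, which makes $|\tilde u_\ep|^{2^*_s-\ep}_{2^*_s-\ep}$ blow up, contradicting the energy bound of Lemma~\ref{As:energas}. Your Brezis--Lieb ``backup'' also begs the question: it would give a contradiction only after one has controlled the nonlocal interaction term $(u_\ep^+,u_\ep^-)_s$, which is exactly what the least-energy hypothesis does not hand you for free.

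\textbf{Step (v): the stated reason for the restriction $s>\tfrac12$ is wrong.} You attribute it to ``sharper boundary estimates for the fractional Dirichlet problem,'' but boundary regularity plays no role here. In the paper, the condition $s>\tfrac12$ enters exclusively via the fractional \emph{Strauss inequality} \eqref{straussineq} for radial functions,
\[
\sup_{x\neq 0}|x|^{\frac{n-2s}{2}}|u(x)|\le K_{n,s}\|u\|_s^2,
\]
which fails for $s\le\tfrac12$. Combined with $M_\ep^-\to+\infty$ it gives $|y_\ep|\to 0$ in one line. The annulus-mass argument used for $x_\ep$ cannot be reused for $y_\ep$, precisely because (after step (ii)) $M_\ep^-/M_\ep^+\to 0$, so the value $\tilde u_\ep\bigl((M_\ep^+)^{\beta_\ep}y_\ep\bigr)$ tends to zero and the H\"older lower bound on an annulus disappears. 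Your proposed blow-up at scale $\nu_\ep=(M_\ep^-)^{\frac{2}{n-2s}-\frac{\ep}{2s}}$ centered at $y_\ep$ is a sensible idea, but without the Strauss estimate you still have no control on $|y_\ep|$, and your claim that ``concentration on an interior sphere $|x|=r_0>0$ is ruled out as in step (iii)'' is not substantiated: the rescaled function centered at $y_\ep$ is not radial about its own center, so the single-peak argument does not transfer. This is exactly the open point discussed in the introduction for $s\le\tfrac12$, and it is why the theorem states (v) only for $s>\tfrac12$.
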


Theorem \ref{mainteo} establishes the first existence result of sign-changing bubble-tower solutions for non-local semilinear elliptic problems driven by the $s$-Laplacian, when $s>\frac{1}{2}$. For $s \in (0,\frac{1}{2}]$ we still get that the positive and the negative part blow-up with different speeds, but for the negative part we cannot provide any information about its concentration point.
From a technical point of view (see the proof of Lemma \ref{L:maxpointbehavior}) this is due to the fractional Strauss inequality for radial functions, namely
\begin{equation}\label{straussineq}
\sup_{x \in \R^n \setminus\{0\}}|x|^{\frac{n-2s}{2}}|u(x)| \leq K_{n,s} \|u\|^2_s,
\end{equation}
 where $K_{n,s}$ is an explicit positive constant depending only on $n$, $s$. Indeed, as pointed out in \cite[Remark 2, Remark 4]{choozawa},  \eqref{straussineq} does not hold when $s \in (0,\frac{1}{2}]$. We also stress that in view of the non-local nature of our problem the positive and negative parts are are not, in general, sub or super solutions to Problem \eqref{fracsemilinear} in their domain of definition, so it seems quite hard to overcome this difficulty by applying scaling arguments to $u_\ep^+, u_\ep^-$ separately.

On the other hand, as proved in \cite{BEP3} for the Laplacian, if the blow-up speeds of $u_\ep^+, u_\ep^-$ are comparable then they must concentrate at two separate points. Therefore, in view of (ii), we believe that also for $s \in (0,\frac{1}{2}]$ the negative part concentrates at the center of the ball. We plan to investigate this question  in separate paper. In addition, we think that, as done in \cite{PistWeth} for the Laplacian, by using a Lyapunov--Schmidt reduction method it should be possible to construct sign-changing bubble-tower solutions in general bounded domains, for all $s\in(0,1)$.

We  point out that, thanks to (ii) and (iii), any global maximum point is close to the origin, when $\ep>0$ is sufficiently small. Moreover, in Lemma \ref{L:maxasynt} we specify that any such a point belongs to the nodal component containing the origin and blows-up faster than any other extremal value achieved in the other nodal components, independently on the number of sign-changes. In the local case, by using ODE techniques, it is well known that the global maximum point is the origin and the absolute values of the extrema are ordered in a radially decreasing way. Our result allows to recover these properties, at least asymptotically, via PDE-only arguments.
\medskip

In the second part of this work we study the nodal set of least energy radial sign-changing solutions to \eqref{fracradsubcrit}. 
We remark that, if $u_\ep$ is a nodal solution  to \eqref{fracradsubcrit} and $u_\ep\geq 0$ in a subdomain $D\subset B_R$, the fractional strong maximum principle does not ensure, in general, that $u_\ep > 0$ in $D$ (see \cite[Remark 4.2]{CabSir} and \cite[Sect. 1]{CorIac}). In addition \cite[Theorem 1.4]{fellifall} only grants that $u_\ep$ does not vanish in a set of positive measure. Nevertheless, combining the results of \cite{CorIac} with a new argument based on energy and regularity estimates, we show that for any $s\in(0,1)$ least energy radial sign-changing solutions to \eqref{fracradsubcrit} vanish only where a change of sign occurs (see Lemma \ref{L:nozeroorigin}, Lemma \ref{L:nodalchar}).

Finally, in Theorem \ref{mainteo2} we prove that for any $s_0 \in (0,1)$, if there exists a $L^2(B_R)$-continuous family $\mathcal{A}=\{u_{\ep,s}\}_{s \in [s_0,1)}$ of least energy nodal radial solutions to \eqref{fracradsubcrit}, then every element of the family changes sign exactly once, provided that $\ep>0$ is small enough.  The key ingredients of the proof are the estimates contained in \cite[Theorem 1.2]{RosSer}, and the continuity of the map $s\mapsto C_{\mathcal{M}^r(B_R)}(s,\ep)$, where $C_{\mathcal{M}^r(B_R)}(s,\ep)$ is the infimum of the energy over the nodal Nehari set, which is a new result of its own interest (see Proposition \ref{P:contenermap}).
\medskip


The outline of the paper is the following: in Section 2 we fix the notation and we recall some known results about the existence of sign-changing solutions to \eqref{fracradsubcrit}, in Section 3 we study the asymptotic behavior, as $\ep\to 0^+$, of the energy levels $C_{\mathcal{M}(\Omega)}(s,\ep)$ in generic bounded domains. In Section 4 we prove Theorem \ref{mainteo}. Finally in Section 5 we analyze the nodal set of least energy radial sign-changing solutions to \eqref{fracradsubcrit} and we prove Theorem \ref{mainteo2}.

\section{Notation and preliminary results}
In this section we recall some definitions and known facts that will be used in this work.
\subsection{Functional setting, standard bubbles}
In this paper $(-\Delta)^s$ stands for the (restricted) $s$-Laplacian operator, which is formally defined as
\[
(-\Delta)^s u(x) := C_{n,s} P.V. \int_{\R^n}\frac{u(x)-u(y)}{|x-y|^{n+2s}}\de y= C_{n,s} \lim_{\varepsilon \to 0^+} \int_{\R^n \setminus B_\varepsilon(x)}\frac{u(x)-u(y)}{|x-y|^{n+2s}}\de y,
\]
where the constant $C_{n,s}$ is given by

\[
C_{n,s} := \frac{2^{2s} \Gamma\left(\frac{n}{2}+s\right)}{\pi^{\frac{n}{2}}|\Gamma(-s)|}.
\]
Let $s \in (0,1)$ and let $n > 2s$. For a given smooth bounded domain $\Omega \subset \R^n$, we consider as a working functional space the Sobolev space 
\[
X_0^s(\Omega) := \{u \in H^s(\R^n) \ ; \ u = 0 \text{ a.e. in }\R^n\setminus \Omega\},
\] endowed with the norm
\[
\|u\|^2_s := {\frac{C_{n,s}}{2}\int_{\R^{2n}} \frac{|u(x)-u(y)|^2}{|x-y|^{n+2s}}\de x \de y},
\] 
and whose associated scalar product is
\[
(u, v)_s := \frac{C_{n,s}}{2}\int_{\R^{2n}} \frac{(u(x)-u(y))(v(x)-v(y))}{|x-y|^{n+2s}}\de x \de y.
\]
The Sobolev space $\D^s(\R^n)$ is defined as the completion of $C_0^\infty(\R^n)$ with respect to the above norm. By the fractional Sobolev embedding theorem it holds that $\D^s(\R^n) \hookrightarrow L^{2_s^*}(\R^n)$ and $X_0^s(\Omega) \hookrightarrow L^{p}(\Omega)$ for all $p \in [1, 2_s^*]$, where $2_s^*=\frac{2n}{n-2s}$. The previous embeddings are continuous, and the second one is compact when $p \in [1, 2_s^*)$.
The best Sobolev constant is characterized as
\[
S_s := \inf_{v \in \D^s(\R^n)\setminus\{0\}}\frac{\|v\|^2_s}{|v|^2_{2^*_s}},
\]
where $|\cdot|_{p}$ denotes the usual $L^{p}$-norm, for $p \in [1, \infty]$. To simplify the notation we will not specify the domain of integration in $|\cdot|_{p}$, but it will be always clear from the context that it is either $\R^n$, or a fixed bounded domain $\Omega$, or a family of bounded domains when considering rescaled functions.
The value of $S_s$ is explicitly known (see \cite{CotTav}), it depends continuously on $s \in [0,1]$, and it is achieved exactly by the family
\[
\mathcal{U}_{\mu, x_0,k}(x):=k\left(\frac{\mu}{\mu^2 + |x-x_0|^2}\right)^\frac{n-2s}{2}, \quad \mu >0,\ x_0 \in \R^n,\ k \in \R. 
\]
If we choose $k = b_{n,s}$, where
\begin{equation}\label{eq:defbns}
b_{n,s}:= 2^{\frac{n-2s}{2}}\left( \frac{\Gamma\left( \frac{n+2s}{2}\right)}{\Gamma\left( \frac{n-2s}{2}\right)}\right)^{\frac{n-2s}{4s}},
\end{equation}
then the functions
\begin{equation}\label{fracbubble}
U_{x_0, \mu}(x) := b_{n,s}\left(\frac{\mu}{\mu^2 + |x-x_0|^2}\right)^\frac{n-2s}{2},
\end{equation}
also known as ``standard fractional bubbles", satisfy
\begin{equation}\label{Eq:Lan-Em}
(-\Delta)^s U_{x_0, \mu} = U_{x_0, \mu}^{2^*_s-1} \quad \text{ in }\R^n
\end{equation}
for all $\mu>0, x_0 \in \R^n$ and
\[
\|U_{x_0, \mu}\|^2_s = |U_{x_0, \mu}|^{2^*_s}_{2^*_s}.
\]

\subsection{Existence of constant-sign and sign-changing solutions}
Let $\Omega \subset \R^n$ be a smooth bounded domain and consider the problem 
\begin{equation}\label{fracsubcrit}
\begin{cases}
(-\Delta)^s u = |u|^{2^*_s-2-\varepsilon}u &\text{in }\Omega, \\
u = 0 &\text{in }\R^n \setminus \Omega,
\end{cases}
\end{equation}
where $\ep \in (0, 2^*_s-2)$.
Weak solutions to \eqref{fracsubcrit} correspond to critical points of the functional
\[
I_{s,\varepsilon}(u) := \frac{1}{2}\|u\|^2_s - \frac{1}{2^*_s-\varepsilon}|u|_{2^*_s-\varepsilon}^{2^*_s-\varepsilon}. 
\]
 The Nehari manifold and the nodal Nehari set are, respectively, defined by
\[
\begin{aligned}
&\mathcal{N}_{s,\varepsilon}(\Omega) : = \{ u \in X_0^s(\Omega) \ ; \ I_{s, \varepsilon}'(u)[u] = 0, u \not \equiv 0\},\\
&\mathcal{M}_{s, \varepsilon}(\Omega) := \{u \in X_0^s(\Omega) \ ; \ I_{s, \varepsilon}'(u)[u^\pm] = 0 , u^\pm \not \equiv 0\}.
\end{aligned}
\]
Since we deal with subcritical nonlinearities, by standard variational methods we know that there exists a minimizer $u_\varepsilon \in \mathcal{N}_{s, \varepsilon}(\Omega)$ of $I_{s, \varepsilon}$, and we set 
\[
C_{\mathcal{N}(\Omega)}(s, \varepsilon):= \inf_{v \in \mathcal{N}_{s, \varepsilon}(\Omega)}I_{s, \varepsilon}(v).
\]
Moreover, the minimizer is a weak solution to \eqref{fracsubcrit} and it is of constant sign. We also remark that, equivalently, constant-sign weak solutions to \eqref{fracsubcrit} can be found as minimizers to
\[
S_{s,\varepsilon} := \inf_{v \in X^s_0(\Omega) \setminus \{0\}}\frac{\|v\|^2_s}{|v|^2_{2^*_s-\varepsilon}},
\]
and the following relation holds
\begin{equation}\label{E:posenequiv}
C_{\mathcal{N}(\Omega)}(s, \varepsilon) = \frac{2^*_s-2-\varepsilon}{2(2^*_s-\varepsilon)}S_{s,\varepsilon}^{\frac{2^*_s-\varepsilon}{2^*_s-2-\varepsilon}}.
\end{equation}

In the case of sign-changing solutions, as proved in \cite{TWW}, there exists a minimizer of the energy over the nodal Nehari set, and it is a weak solution to \eqref{fracsubcrit}. We refer to such solutions as least energy sign-changing (or nodal) solutions and we set
\[
C_{\mathcal{M}(\Omega)}(s, \varepsilon) := \inf_{v \in \mathcal{M}_{s, \varepsilon}(\Omega)}I_{s, \varepsilon}(v).
\]

Let us now turn our attention to the radial case. Taking $\Omega = B_R$, where $B_R= B_R(0)$ denotes the ball in $\R^n$ of radius $R>0$ centered at the origin, we set
\[
\begin{aligned}
&\mathcal{N}^r_{s,\varepsilon}(B_R): = \{ u \in X^s_0(B_R) \ ; \ u \in \mathcal{N}_{s,\varepsilon}(B_R) \text{ and } u \text{ is radially symmetric}\},\\
&\mathcal{M}^r_{s, \varepsilon} (B_R) := \{u \in X^s_0(B_R) \ ; \ u \in \mathcal{M}_{s,\varepsilon}(B_R)\text{ and } u \text{ is radially symmetric}\}.
\end{aligned}
\]
As a consequence of the fractional moving plane method (see \cite{ChenLiLi}), positive solutions of \eqref{fracsubcrit} in $B_R$ are radially symmetric and radially decreasing. In particular, it holds that
\[
C_{\mathcal{N}(B_R)}(s, \varepsilon) = C_{\mathcal{N}^{r}(B_R)}(s, \varepsilon) := \inf_{v \in \mathcal{N}^r_{s, \varepsilon}(B_R)}I_{s, \varepsilon}(v).
\]
Concerning the case of nodal solutions, arguing as in \cite{TWW} we obtain least energy radial sign-changing solutions as minimizers of the energy over the radial nodal Nehari set, and as before we denote
\[ 
C_{\mathcal{M}^{r}(B_R)}(s, \varepsilon) := \inf_{v \in \mathcal{M}^r_{s, \varepsilon}(B_R)}I_{s, \varepsilon}(v).
\]
We point out that it is not known whether or not $C_{\mathcal{M}^{r}(B_R)}(s, \varepsilon)$ coincide with $C_{\mathcal{M}(B_R)}(s, \varepsilon)$, but they have the same limit when $\ep\to 0^+$ (see Lemma \ref{L:nodalenerasynt}).

\section{Asymptotic analysis of the energy levels as $\varepsilon\to 0^+$}

In this section we study the asymptotic behavior  as $\ep\to 0^+$ of the energy levels $C_{\mathcal{N}(\Omega)}(s, \varepsilon)$, $C_{\mathcal{M}(\Omega)}(s, \varepsilon)$ defined in Sect. 2. We begin with the following technical result. 

\begin{lemma}\label{bubbleconc}
Let $s \in (0,1)$ and $n>2s$. Let $\Omega\subset \R^n$ be a domain, let $x_0 \in \Omega$ and $\rho>0$ be such that $B_{4\rho}(x_0) \subset \Omega$. Let $\varphi \in C^{\infty}_c(\Omega)$ be such that $supp(\varphi) \subset B_{2\rho}(x_0)$, $0 \leq \varphi \leq 1$ in $B_{2\rho}(x_0)$ and $\varphi \equiv 1$ in $B_\rho(x_0)$. There exists $\tau_0 >0$ such that for every $\tau \in (0,\tau_0)$, setting
\begin{equation}\label{Eq:restfracbub}
u^s_\tau(x):= \varphi (x) \tau^{-\left(\frac{n-2s}{2}\right)} U_{x_0, \mu}\left( \frac{x-x_0}{\tau} + x_0 \right),
\end{equation}
where $U_{x_0, \mu}$ is defined by \eqref{fracbubble}, then the following estimates hold: 
\begin{equation}\label{critapproxtalent}
\begin{aligned}
&\|u^s_\tau\|^2_s \leq S_s^{\frac{n}{2s}} + C\tau^{n-2s},\\
&0 < S_s^{\frac{n}{2s}} - C\tau^n \leq |u^s_\tau|^{2^*_s}_{2^*_s}\leq S_s^{\frac{n}{2s}},\\
&0 \leq |u^s_\tau|_1 \leq C\tau^{\frac{n-2s}{2}},
\end{aligned}
\end{equation}
where the constants $C$ are positive and depend only on $n$, $s$, $x_0$, $\mu$ and $\rho$.
Moreover, for any $0<\varepsilon < \frac{2s}{n-2s}$, taking $\mu = b_{n,s}^{\frac{2}{n-2s}}$, where $b_{n,s}$ is given by \eqref{eq:defbns}, we have
\begin{equation}\label{subcritapproxtalent}
\begin{aligned}
&0<\tau^{\left(\frac{n-2s}{2}\right)\varepsilon}\left[S_s^{\frac{n}{2s}}- C\tau^n\right] \leq |u^s_\tau|^{2^*_s-\varepsilon}_{2^*_s-\varepsilon}\leq C \tau^{\left(\frac{n-2s}{2}\right)\varepsilon},\\
&0< \tau^{\left(\frac{n-2s}{2}\right)(1+\varepsilon)}\left[S_s^{\frac{n}{2s}}- C\tau^n\right] \leq |u^s_\tau|^{2^*_s-1-\varepsilon}_{2^*_s-1-\varepsilon}\leq C \tau^{\left(\frac{n-2s}{2}\right)(1+\varepsilon)},
\end{aligned}
\end{equation}
where the appearing constants are positive and depend only on $n$, $s$, $x_0$ and $\rho$.
Let $0<s_0< s_1\leq 1$ and let $n > 2s_1$. Then, if $s \in [s_0, s_1)$ and $\varepsilon \in \left(0, \frac{2s_0}{n-2s_0}\right)$, both $\tau_0$ and the above constants $C$ can be taken in such a way that they depend on $n$, $\mu$, $\rho$, $s_0$, $s_1$, but not on $s$, $\tau$ and $\ep$. 
\end{lemma}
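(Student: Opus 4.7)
Direct algebraic manipulation shows that $v_\tau(x):=\tau^{-(n-2s)/2}U_{x_0,\mu}((x-x_0)/\tau+x_0)$ is exactly $U_{x_0,\mu\tau}(x)$, so by scale invariance and \eqref{Eq:Lan-Em} one has $\|v_\tau\|_s^2=|v_\tau|_{2^*_s}^{2^*_s}=S_s^{n/(2s)}$. All estimates below compare $u^s_\tau=\varphi v_\tau$ with $v_\tau$ and are driven by the decay $v_\tau(y)\lesssim\tau^{(n-2s)/2}|y-x_0|^{-(n-2s)}$ valid for $|y-x_0|\gtrsim\mu\tau$.

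\textbf{$L^{2^*_s}$ and $L^1$ bounds.} The inequality $|u^s_\tau|_{2^*_s}^{2^*_s}\le|v_\tau|_{2^*_s}^{2^*_s}$ is immediate from $0\le\varphi\le 1$. For the lower bound,
\begin{equation*}
|u^s_\tau|_{2^*_s}^{2^*_s}\ge\int_{B_\rho(x_0)}v_\tau^{2^*_s}\,dx=S_s^{n/(2s)}-\int_{\R^n\setminus B_\rho(x_0)}v_\tau^{2^*_s}\,dx,
\end{equation*}
and the substitution $z=(x-x_0)/(\mu\tau)$ makes the tail $\int_{|z|>\rho/(\mu\tau)}U_{0,1}^{2^*_s}\,dz=O(\tau^n)$ (using $U_{0,1}^{2^*_s}\sim|z|^{-2n}$ at infinity). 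The same scaling together with $\int_{|z|<R}U_{0,1}(z)\,dz\sim R^{2s}$ for $R\to\infty$ yields $|u^s_\tau|_1\le\int_{B_{2\rho}(x_0)}v_\tau\,dx=O(\tau^{(n-2s)/2})$.

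\textbf{The $H^s$ bound (main obstacle).} I would split $u^s_\tau=v_\tau-(1-\varphi)v_\tau$ and expand
\begin{equation*}
\|u^s_\tau\|_s^2=\|v_\tau\|_s^2-2(v_\tau,(1-\varphi)v_\tau)_s+\|(1-\varphi)v_\tau\|_s^2.
\end{equation*}
Since $(1-\varphi)v_\tau\in\mathcal{D}^s(\R^n)$, testing \eqref{Eq:Lan-Em} against it yields $(v_\tau,(1-\varphi)v_\tau)_s=\int(1-\varphi)v_\tau^{2^*_s}\,dx\ge 0$, so it suffices to prove $\|(1-\varphi)v_\tau\|_s^2=O(\tau^{n-2s})$. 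The scaling $\|f((\cdot-x_0)/\lambda)\|_s^2=\lambda^{n-2s}\|f\|_s^2$ transforms this into $\|(1-\varphi)v_\tau\|_s^2=\|H_\tau\|_s^2$ with $H_\tau(z):=(1-\varphi(x_0+\mu\tau z))U_{0,1}(z)$ supported in $\{|z|>R\}$, $R:=\rho/(\mu\tau)$. Since $U_{0,1}(z)\lesssim|z|^{-(n-2s)}$ on this set and the Lipschitz constant of the rescaled cutoff is $O(1/R)$, splitting the Gagliardo integral into the off-diagonal piece $\{|z|\le R<|w|\}$ and the external piece $\{|z|,|w|>R\}$ (further decomposed into short- and long-range interactions via the algebraic identity $|fg(z)-fg(w)|^2\le 2f(z)^2(g(z)-g(w))^2+2g(w)^2(f(z)-f(w))^2$) produces, by exponent bookkeeping dominated by $\int_R^\infty r^{-n+2s-1}\,dr=O(R^{-(n-2s)})$, the required bound. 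This is the fractional analogue of the classical Brezis--Nirenberg truncated-bubble estimate.

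\textbf{Subcritical $L^p$ bounds and uniformity in $s$.} The normalization $\mu=b_{n,s}^{2/(n-2s)}$ makes $v_\tau(x_0)=\tau^{-(n-2s)/2}$ a global maximum, so $v_\tau(x)^{-\ep}\ge\tau^{(n-2s)\ep/2}$ pointwise. Hence
\begin{equation*}
|u^s_\tau|_{2^*_s-\ep}^{2^*_s-\ep}\ge\int_{B_\rho(x_0)}v_\tau^{2^*_s}v_\tau^{-\ep}\,dx\ge\tau^{(n-2s)\ep/2}\bigl(S_s^{n/(2s)}-C\tau^n\bigr),
\end{equation*}
via the lower $L^{2^*_s}$-bound, and the identical argument with exponent $1+\ep$ yields the $(2^*_s-1-\ep)$-bound. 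The matching upper bounds follow from scaling: $\int v_\tau^{2^*_s-\ep}\,dx=(\mu\tau)^{(n-2s)\ep/2}\int U_{0,1}^{2^*_s-\ep}\,dz$, where the restriction $\ep<2s/(n-2s)$ guarantees $(n-2s)(2^*_s-\ep)>n$ and hence integrability of $U_{0,1}^{2^*_s-\ep}$ at infinity. Uniformity for $s\in[s_0,s_1]$ is inherited from the continuous dependence on $s$ of $C_{n,s}$, $b_{n,s}$, and $S_s$, together with the fact that $n-2s\in[n-2s_1,n-2s_0]\subset(0,\infty)$ stays uniformly away from its degenerate value; consequently, $\tau_0$ and all constants in the previous steps can be chosen independent of $s$, $\tau$, and $\ep$.
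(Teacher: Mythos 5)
Your proof is correct and, for the $L^{2^*_s}$, $L^1$, and subcritical $L^p$ estimates, the route is essentially the paper's: rewriting the truncated function in terms of a rescaled bubble, comparing to the full bubble on $B_\rho(x_0)$ with a quantified tail, and (for the lower subcritical bound) exploiting the normalization $\mu=b_{n,s}^{2/(n-2s)}$, which makes $|v_\tau|_\infty=\tau^{-(n-2s)/2}$ so that a negative power of $v_\tau$ can be bounded pointwise from below. Your pointwise form $v_\tau^{-\ep}\geq\tau^{(n-2s)\ep/2}$ is literally the same observation the paper uses in the form $|U_{\mu,x_0}|^q\geq|U_{\mu,x_0}|^{2^*_s}$ for $q<2^*_s$.

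Where you genuinely diverge is the $H^s$ estimate. The paper does not re-derive $\|u^s_\tau\|_s^2\leq S_s^{n/(2s)}+C\tau^{n-2s}$ at all; it imports it from Servadei's and Servadei--Valdinoci's papers and points to a remark in the authors' earlier work for the dependence of the constant on the parameters. You instead give a self-contained derivation via the split $u^s_\tau=v_\tau-(1-\varphi)v_\tau$, observing that the cross term $(v_\tau,(1-\varphi)v_\tau)_s=\int(1-\varphi)v_\tau^{2^*_s}\ge 0$ by testing the Euler--Lagrange equation \eqref{Eq:Lan-Em}, so the whole error is controlled by $\|(1-\varphi)v_\tau\|_s^2$, which after rescaling reduces to the Gagliardo seminorm of a bubble tail times a slowly varying cutoff on $\{|z|>R\}$ with $R\sim\tau^{-1}$. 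The decomposition and the sign of the cross term are a clean way to package this; it is slightly stronger than what is needed (it gives a one-sided inequality with the error cleanly isolated in one nonnegative term), and it makes transparent why the constant only depends on $n$, $s$, $\rho$, $\mu$. The remaining exponent bookkeeping in the Gagliardo integral (off-diagonal piece $\{|z|\le R<|w|\}$ plus the exterior piece split by the elementary product-difference identity) is sketched rather than written out, but it is the standard computation and the announced decay $\int_R^\infty r^{2s-n-1}\,dr=O(R^{-(n-2s)})$ is the correct controlling integral. The uniformity in $s\in[s_0,s_1)$ is stated somewhat summarily in both your proof and the paper's; both rely on the continuity and strict positivity of the quantities $C_{n,s}$, $b_{n,s}$, $S_s$, $n-2s$ on the compact interval, which is fine as long as $s_1<1$ in the cases where $C_{n,s}\to 0$ would otherwise matter -- and indeed this constant cancels out in the seminorm bookkeeping.
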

\begin{proof}
Inequalities \eqref{critapproxtalent} are proved in \cite{Ser}, \cite{SerVal} and hold true for all sufficiently small $\tau>0$ with constants $C$ independent on $\tau$. Concerning the dependence of the constants $C$ on the other parameters we refer to \cite[Remark 2.2]{CorIac}. Let us focus on the proof of \eqref{subcritapproxtalent}. Taking if necessary a smaller $\tau_0>0$ so that $\tau_0 < \min\{1, 2\rho\}$, we find that, when $q =2^*_s-\varepsilon$ or $q=2^*_s-1-\varepsilon$,
\[
\int_{\R^n}|u^s_\tau|^q \de x \leq C \tau^{n - \left(\frac{n-2s}{2}\right)q}\left(C+ \int_1^{\frac{2\rho}{\tau}}r^{n- (n-2s)q-1} \de r \right), 
\]
where the constants $C>0$ depend on $n, s, \mu$, but not on $\tau$ nor on $\varepsilon$. Furthermore, since $0<\varepsilon<\frac{2s}{n-2s}$ we have
\[
\int_1^{\frac{2\rho}{\tau}}r^{n- (n-2s)q-1} \de r \leq C,
\]
for some constant $C>0$ independent on $\tau$ and $\varepsilon$. Recalling the definition of $b_{n,s}$, one can see that all the previous constants can be taken in a uniform way with respect to $s \in [s_0, s_1)$ when $n > 2s_1$ and $\varepsilon \in \left(0, \frac{2s_0}{n-2s_0}\right)$. Hence the right-hand side inequalities in \eqref{subcritapproxtalent} are proved.
 
In order to prove the the left-hand side inequalities it suffice to notice that, thanks to our choice of $\mu= b_{n,s}^{\frac{2}{n-2s}}$, it follows that $|U_{\mu,x_0}^s|_\infty=1$ and thus $|U^s_{ \mu, x_0}|^q \geq |U^s_{ \mu, x_0}|^{2^*_s}$ for every $x \in \R^n$, where $q =2^*_s-\varepsilon$ or $q=2^*_s-1-\varepsilon$. Then, using also \eqref{critapproxtalent}, we find that
\[
\begin{aligned}
\int_{\Omega} |u^s_\tau|^{q} \de x &\geq \tau^{n - \left(\frac{n-2s}{2}\right)q}\int_{B_{\rho/\tau}}|U^s_{\mu,x_0}|^q \de x\\
&\geq \tau^{n - \left(\frac{n-2s}{2}\right)q}\left[S_s^{\frac{n}{2s}}- \int_{\R^n \setminus B_{\rho/\tau}}|U^s_{\mu, x_0}|^{2^*_s}\de x\right]\\
&\geq \tau^{n - \left(\frac{n-2s}{2}\right)q}\left[S_s^{\frac{n}{2s}}- C\tau^n\right],
\end{aligned}
\]
for some constant $C >0$ which depends only on $n$, $s$ and $\rho$, but not on $\tau$, $\ep$, and which is uniform with respect to $s \in [s_0, s_1)$. The proof is complete.
\end{proof}

As a consequence we obtain the following uniform asymptotic result on $C_{\mathcal{N}(\Omega)}(s,\varepsilon)$. 

\begin{lemma}\label{AsintPos}
Let $s \in (0,1)$, $n>2s$ and let $\Omega$ be a smooth bounded domain of $\R^n$. Then, as $\varepsilon \to 0^+$, it holds 
\begin{equation}\label{asyntener1}
C_{\mathcal{N}(\Omega)}(s,\varepsilon) \to \frac{s}{n}S_s^{\frac{n}{2s}}.
\end{equation}
Moreover, if $0<s_0< s_1\leq 1$ and $n>2s_1$, for every $\varepsilon \in  \left(0, \min\left\{\tau_0, \frac{2s_0}{n-2s_0},1\right\}\right)$, where $\tau_0$ is given by Lemma \ref{bubbleconc}, we have
\begin{equation}\label{asyntener1uni}
\sup_{s \in (s_0,s_1)}\left|C_{\mathcal{N}(\Omega)}(s,\varepsilon) - \frac{s}{n}S_s^{\frac{n}{2s}}\right|\leq g_1(\varepsilon),  
\end{equation}
where $g_1$ does not depend on $s$ and $g_1(\varepsilon) \to 0^+$ as $\varepsilon \to 0^+$. 
\end{lemma}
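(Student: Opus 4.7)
The explicit formula \eqref{E:posenequiv} reduces everything to the asymptotic behavior of $S_{s,\varepsilon}$: indeed the prefactor
\[
\frac{2^*_s-2-\varepsilon}{2(2^*_s-\varepsilon)}\longrightarrow \frac{2^*_s-2}{2\cdot 2^*_s}=\frac{s}{n}\qquad\text{and}\qquad \frac{2^*_s-\varepsilon}{2^*_s-2-\varepsilon}\longrightarrow \frac{n}{2s}
\]
are elementary rational functions of $(s,\varepsilon)$ whose convergences are uniform on any compact $s$-interval in $(0,1)$; combined with the fact that $s\mapsto S_s$ is continuous and bounded away from $0$ and $\infty$ on $[s_0,s_1]$, a uniform two-sided estimate $S_{s,\varepsilon}=S_s+o_\varepsilon(1)$ immediately produces \eqref{asyntener1} and \eqref{asyntener1uni} with a remainder $g_1(\varepsilon)$ not depending on $s$.

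\textbf{Upper bound on $S_{s,\varepsilon}$.} I would test the Rayleigh quotient defining $S_{s,\varepsilon}$ against the truncated bubble $u^s_\tau$ of Lemma \ref{bubbleconc}. Combining the first estimate in \eqref{critapproxtalent} with the first estimate in \eqref{subcritapproxtalent}, after raising the latter to the power $2/(2^*_s-\varepsilon)$, yields
\[
S_{s,\varepsilon}\leq \frac{\|u^s_\tau\|_s^2}{|u^s_\tau|^2_{2^*_s-\varepsilon}}\leq \tau^{-\frac{(n-2s)\varepsilon}{2^*_s-\varepsilon}}\,\frac{S_s^{n/(2s)}+C\tau^{n-2s}}{\bigl[S_s^{n/(2s)}-C\tau^n\bigr]^{2/(2^*_s-\varepsilon)}}.
\]
I then let $\tau=\tau(\varepsilon)\to 0^+$ in such a way that $\tau^\varepsilon\to 1$; the simplest admissible choice is $\tau(\varepsilon)=\varepsilon$, since then $\varepsilon\log\varepsilon\to 0$. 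With this choice the right-hand side tends to $S_s$, and the uniform statement in Lemma \ref{bubbleconc} guarantees that the convergence holds uniformly in $s\in[s_0,s_1]$.

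\textbf{Lower bound on $S_{s,\varepsilon}$.} Since $\Omega$ is bounded and $2^*_s-\varepsilon<2^*_s$, H\"older's inequality gives
\[
|v|_{2^*_s-\varepsilon}^2\leq |\Omega|^{\frac{2\varepsilon}{2^*_s(2^*_s-\varepsilon)}}\,|v|_{2^*_s}^2 \qquad\text{for every } v\in X_0^s(\Omega)\setminus\{0\}.
\]
Dividing by $\|v\|_s^2$, passing to the infimum, and using the definition of $S_s$, one obtains
\[
S_{s,\varepsilon}\geq |\Omega|^{-\frac{2\varepsilon}{2^*_s(2^*_s-\varepsilon)}}\,S_s,
\]
and the exponent of $|\Omega|$ is $O(\varepsilon)$ uniformly in $s\in[s_0,s_1]$, so the right-hand side converges to $S_s$ uniformly in $s$.

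\textbf{Expected difficulty.} The delicate point is the upper bound: the prefactor $\tau^{-(n-2s)\varepsilon/(2^*_s-\varepsilon)}$ diverges unless $\tau^\varepsilon$ is kept close to $1$, while at the same time $\tau\to 0$ is needed for the bubble to concentrate. The choice $\tau=\varepsilon$ strikes this balance, but one has to check that the constants in Lemma \ref{bubbleconc}, the exponent $2/(2^*_s-\varepsilon)$, and the base $S_s^{n/(2s)}-C\tau^n$ all behave uniformly in $s\in[s_0,s_1]$ when assembling the final rate $g_1(\varepsilon)$. Apart from this bookkeeping, the argument is routine.
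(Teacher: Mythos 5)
Your proposal is correct, and the overall structure (reduce to $S_{s,\varepsilon}\to S_s$ via \eqref{E:posenequiv}, then sandwich $S_{s,\varepsilon}$ between an upper bound from the truncated bubble with $\tau=\varepsilon$ and a lower bound from H\"older) matches the paper. The one genuine difference is the lower bound: the paper interpolates $|u|_{2^*_s-\varepsilon}$ between $|u|_2$ and $|u|_{2^*_s}$ and then invokes the first eigenvalue $\lambda_{1,s}(\Omega)$ to control $|u|_2$ by $\|u\|_s$, which forces it to record the uniform two-sided bound \eqref{constantbound} on $\lambda_{1,s}$ over $s\in[s_0,s_1)$. You instead apply H\"older directly between $L^{2^*_s-\varepsilon}$ and $L^{2^*_s}$ using only $|\Omega|<\infty$, getting $S_{s,\varepsilon}\geq |\Omega|^{-2\varepsilon/[2^*_s(2^*_s-\varepsilon)]} S_s$. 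This is simpler and already uniform in $s$ without any spectral input, since $2^*_s$ stays in a compact subset of $(2,\infty)$ for $s\in[s_0,s_1]$. Both routes give an error factor of the form $(\text{const})^{\varepsilon}$, so the resulting $g_1$ is qualitatively the same; your variant just avoids an auxiliary lemma. The upper bound and the bookkeeping on the rational prefactors and exponent in \eqref{E:posenequiv} are exactly as in the paper.
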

\begin{proof}
Let $s \in (0,1)$ and $n>2s$. In order to prove \eqref{asyntener1}, in view of \eqref{E:posenequiv}, it is sufficient to show that $S_{s,\varepsilon} \to S_s$, as $\ep\to 0^+$, where $S_{s,\varepsilon} := \inf_{u \in X^s_0(\Omega) \setminus \{0\}} \frac{\|u\|^2_s}{|u|^2_{2^*_s-\varepsilon}}$. 
To this end we observe that, by H\"older's inequality, for every $u \in X^s_0(\Omega)$ and any sufficiently small $\ep>0$ we have
\[
|u|_{2^*_s-\varepsilon} \leq |u|_2^{\frac{2\varepsilon}{(2^*_s-\varepsilon)(2^*_s-2)}} |u|_{2^*_s}^{\frac{2^*_s(2^*_s-2-\varepsilon)}{(2^*_s-\varepsilon)(2^*_s-2)}}.
\]
Then, thanks to the fractional Sobolev embedding and the variational characterization of the eigenvalues, we infer that
\[
|u|^2_{2^*_s-\varepsilon} \leq \lambda_{1,s}(\Omega)^{-\frac{2\varepsilon}{(2^*_s-\varepsilon)(2^*_s-2)}} S_s^{-\frac{2^*_s(2^*_s-2-\varepsilon)}{(2^*_s-\varepsilon)(2^*_s-2)}}\|u\|_s^2, 
\]
which implies, when $u \not \equiv 0$, that
\[
\lambda_{1,s}(\Omega)^{\frac{2\varepsilon}{(2^*_s-\varepsilon)(2^*_s-2)}} S_s^{\frac{2^*_s(2^*_s-2-\varepsilon)}{(2^*_s-\varepsilon)(2^*_s-2)}} \leq \frac{\|u\|^2_s}{|u|^2_{2^*_s-\varepsilon}}.
\]
Taking the infimum as $u \in X^s_0(\Omega) \setminus \{0\}$ we get that
\begin{equation}\label{eq:firsteigenestimate}
\lambda_{1,s}(\Omega)^{\frac{2\varepsilon}{(2^*_s-\varepsilon)(2^*_s-2)}} S_s^{\frac{2^*_s(2^*_s-2-\varepsilon)}{(2^*_s-\varepsilon)(2^*_s-2)}} \leq S_{s,\varepsilon},
\end{equation}
and thus it follows that
\begin{equation}\label{passag665}
S_s \leq \liminf_{\varepsilon \to 0^+}S_{s, \varepsilon}.
\end{equation}
Now, let us fix $x_0\in \Omega$, $\rho>0$, $\varphi$ as in the statement of Lemma \ref{bubbleconc} and take $\mu= b_{n,s}^{\frac{2}{n-2s}}$. Let $u^s_\tau$ be the function defined in \eqref{Eq:restfracbub}. Using both \eqref{critapproxtalent} and \eqref{subcritapproxtalent}, then for any $\tau \in (0, \tau_0)$, $\ep \in (0,\frac{2s}{n-2s})$ we obtain
\begin{equation}\label{pass666}
S_{s,\varepsilon} \leq \frac{\|u^s_{\tau}\|^2_s}{|u^s_\tau|_{2^*_s-\varepsilon}^2} \leq \frac{S_s^{\frac{n}{2s}} + C\tau^{n-2s}}{\tau^{\left(\frac{n-2s}{2^*_s-\varepsilon}\right)\varepsilon}\left[S_s^{\frac{n}{2s}}- C\tau^n\right]^\frac{2}{2^*_s-\varepsilon}}.
\end{equation}
Hence, for $\ep \in (0,\min\{\tau_0, \frac{2s}{n-2s}\})$, taking $\tau = \varepsilon$ in \eqref{pass666} and by elementary computations, we infer that
\[
\limsup_{\varepsilon \to 0^+}S_{s,\varepsilon} \leq S_s, 
\]
which, together with \eqref{passag665}, implies \eqref{asyntener1}. The first part of the Lemma is thus proved.

For the second part, recalling \cite[(2.5), (2.8)]{CorIac} we have that, fixing $0<s_0< 1$, there exist two positive constants $\underline\lambda$, $\overline\lambda$ such that
\begin{equation}\label{constantbound}
\begin{aligned}
&\underline\lambda \leq \lambda_{1,s}(\Omega)\leq \overline \lambda & &\forall s \in [s_0,1).
\end{aligned}
\end{equation}
Hence, from \eqref{eq:firsteigenestimate}, \eqref{constantbound} we deduce that there exists $C>0$ depending only on $n$, $\Omega$ and $s_0$ such that for all $s \in [s_0, 1)$
\begin{equation}\label{passag15}
S_{s,\varepsilon}\geq S_s C^{\varepsilon}.
\end{equation}
On the other hand, let us fix $s_1$ such that $0<s_0<s_1\leq 1$ and let $s \in (s_0, s_1)$, $n >2s_1$, $\ep \in \left(0, \min\left\{\tau_0, \frac{2s_0}{n-2s_0},1\right\}\right)$, where $\tau_0$ is given by Lemma \ref{bubbleconc}. Then from \eqref{pass666}, \eqref{constantbound}, choosing $\tau=\ep$ and taking into account that $\frac{1}{2}<\varepsilon^\varepsilon \leq 1$ for any $\ep \in (0,1)$, we deduce that
\begin{equation}\label{passag16}
S_{s, \varepsilon} \leq S_s\left(\frac{C^\varepsilon}{(\varepsilon^\varepsilon)^{\alpha}[1-C\varepsilon^n]^{\beta}}\right) + C\varepsilon^{n-2s},
\end{equation}
for some constants $C, \alpha, \beta >0$ which depend only on $n$, $s_0$ and $s_1$, but not on $s$ and $\varepsilon$. 
Therefore, from \eqref{E:posenequiv}, \eqref{passag15} and \eqref{passag16} we obtain 
\[
C^\varepsilon \frac{s}{n}S_s^{\frac{n}{2s}} \leq C_{\mathcal{N}(\Omega)}(s, \varepsilon)\leq \frac{s}{n}S_s^{\frac{n}{2s}}g(\varepsilon), 
\]
where $g$ and $C>0$ do not depend on $s$, and $g$ is such that $g(\varepsilon) \to 1$ as $\varepsilon \to 0^+$. Hence, setting $g_1(\ep):=\max\{|C^\varepsilon-1|, |g(\ep)-1|\}$ we get \eqref{asyntener1uni}. The proof is then complete.
\end{proof}

In the next result we describe the asymptotic behavior of $C_{\mathcal{M}(\Omega)}(s, \varepsilon)$, as $\ep\to 0^+$. 
Differently from the case of critical nonlinearities (see \cite[Lemma 3.6]{CorIac}), there are some difficulties in proving uniform energy estimates from above which are directly related to $C_{\mathcal{N}(\Omega)}(s, \varepsilon)$. To overcome these difficulties we provide a uniform upper bound in terms of $\frac{2s}{n}S_s^{\frac{n}{2s}}$ instead, which is obtained by using as competitors for the energy superpositions of standard bubbles centered at the same point and with different concentration speeds.

\begin{lemma}\label{L:nodalenerasynt}
Let $s \in (0,1)$, $n>2s$ and let $\Omega \subset \R^n$ be a smooth bounded domain. We have
\begin{equation}\label{eq:asrelneharinodal}
\lim_{\varepsilon \to 0^+} C_{{\mathcal{M}}(\Omega)}(s, \varepsilon) = \frac{2s}{n}S_s^{\frac{n}{2s}}.
\end{equation}
Moreover, let $0<s_0<s_1 \leq1$ and $n >2s_1$. Then there exists $\hat \varepsilon=\hat \varepsilon(s_0, s_1) \in (0, 2^*_{s_0}-2)$ such that for every $\varepsilon \in (0, \hat \varepsilon)$
\begin{equation}\label{Eq:uniformen}
\sup_{s \in (s_0, s_1)}\left|C_{\mathcal{M}(\Omega)}(s, \varepsilon) - \frac{2s}{n}S_s^{\frac{n}{2s}}\right|\leq g_2(\varepsilon), 
\end{equation}
where the function $g_2$ does not depend on $s$ and $g_2(\varepsilon) \to 0$ as $\varepsilon \to 0^+$.
The same result holds for $\mathcal{M}^r_{s, \varepsilon}(B_R)$. 

\end{lemma}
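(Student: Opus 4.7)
\emph{Plan of proof.}
The identity \eqref{eq:asrelneharinodal} will follow from the matching inequalities
\[
2\,C_{\mathcal{N}(\Omega)}(s,\varepsilon) \leq C_{\mathcal{M}(\Omega)}(s,\varepsilon) \leq \tfrac{2s}{n}S_s^{n/2s}+o_\varepsilon(1),
\]
combined with Lemma \ref{AsintPos}; the uniform statement \eqref{Eq:uniformen} is obtained by tracking the $s$-uniformity already encoded in Lemma \ref{bubbleconc} and in \eqref{asyntener1uni}. For the radial setting, the inclusion $\mathcal{M}^r_{s,\varepsilon}(B_R) \subset \mathcal{M}_{s,\varepsilon}(B_R)$ produces the lower bound at no extra cost, while the upper-bound competitor can be chosen radially symmetric by centering the bubbles at the origin.

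\emph{Lower bound.} The starting point is the pointwise inequality $(u^+(x)-u^+(y))(u^-(x)-u^-(y)) \leq 0$ for every $x,y \in \R^n$, which implies at once both
\[
\|u\|_s^2 \geq \|u^+\|_s^2 + \|u^-\|_s^2 \quad\text{and}\quad (u^+,u^-)_s \leq 0 \qquad \forall u \in X_0^s(\Omega).
\]
For any $u \in \mathcal{M}_{s,\varepsilon}(\Omega)$, expanding $(u,u^+)_s = \|u^+\|_s^2 - (u^-,u^+)_s$ (and analogously for $u^-$) and using the Nehari relations $I'_{s,\varepsilon}(u)[u^\pm]=0$, I deduce $\|u^\pm\|_s^2 \leq |u^\pm|_{2^*_s-\varepsilon}^{2^*_s-\varepsilon}$. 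Coupling this with the Sobolev inequality $S_{s,\varepsilon}|u^\pm|_{2^*_s-\varepsilon}^2 \leq \|u^\pm\|_s^2$ yields $\|u^\pm\|_s^2 \geq S_{s,\varepsilon}^{(2^*_s-\varepsilon)/(2^*_s-2-\varepsilon)}$. Since $I'_{s,\varepsilon}(u)[u]=I'_{s,\varepsilon}(u)[u^+]-I'_{s,\varepsilon}(u)[u^-]=0$, one has $I_{s,\varepsilon}(u)=\tfrac{2^*_s-2-\varepsilon}{2(2^*_s-\varepsilon)}\|u\|_s^2$; summing the two lower bounds and invoking \eqref{E:posenequiv} gives $I_{s,\varepsilon}(u)\geq 2\,C_{\mathcal{N}(\Omega)}(s,\varepsilon)$, and \eqref{asyntener1uni} closes this half of the argument uniformly in $s \in (s_0, s_1)$.

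\emph{Upper bound.} Fix $x_0 \in \Omega$ (taking $x_0=0$ in the radial case) and $\rho$ with $B_{4\rho}(x_0)\subset\Omega$, a cut-off $\varphi$ as in Lemma \ref{bubbleconc}, and $\mu=b_{n,s}^{2/(n-2s)}$. For $0<\sigma<\tau<\tau_0$ consider the two truncated bubbles $u^s_\tau$, $u^s_\sigma$ from \eqref{Eq:restfracbub}, concentrated at the same point but with very different scales, and set $w_{\tau,\sigma}=u^s_\tau-u^s_\sigma$; since $u^s_\sigma(x_0)=\sigma^{-(n-2s)/2}$ dwarfs $u^s_\tau(x_0)$ while $u^s_\sigma$ decays much faster away from $x_0$, the function $w_{\tau,\sigma}$ is sign-changing. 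I Nehari-project it, i.e.\ I look for $\alpha,\beta>0$ solving the coupled system $I'_{s,\varepsilon}(\alpha w_{\tau,\sigma}^+-\beta w_{\tau,\sigma}^-)[\alpha w_{\tau,\sigma}^\pm]=0$. As $\sigma/\tau\to 0^+$, both the bilinear interaction $(u^s_\tau,u^s_\sigma)_s$ and the mixed integrals $\int (u^s_\tau)^{j}(u^s_\sigma)^{k}$ appearing in the nonlinearity become negligible compared to the diagonal terms controlled by \eqref{critapproxtalent}--\eqref{subcritapproxtalent}, so the system essentially decouples into two independent Nehari projections, one for $u^s_\tau$ and one for $u^s_\sigma$. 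Each contributes an energy at most $\tfrac{s}{n}S_s^{n/2s}+o_\varepsilon(1)$, as in the proof of Lemma \ref{AsintPos}, hence
\[
C_{\mathcal{M}(\Omega)}(s,\varepsilon) \leq I_{s,\varepsilon}(\alpha w_{\tau,\sigma}^+-\beta w_{\tau,\sigma}^-) \leq \tfrac{2s}{n}S_s^{n/2s}+o_\varepsilon(1).
\]
Choosing, for example, $\tau=\varepsilon$ and $\sigma=\varepsilon^{K}$ with $K$ sufficiently large, the uniformity of the constants in Lemma \ref{bubbleconc} with respect to $s\in[s_0,s_1]$ propagates to the error and provides an $s$-independent $g_2(\varepsilon)\to 0$.

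\emph{Main obstacle.} The delicate point is the quantitative, $s$-uniform control of the non-local interaction between the two bubbles. Via the equation \eqref{Eq:Lan-Em}, the bilinear term $(u^s_\tau,u^s_\sigma)_s$ reduces, up to cut-off errors, to $\int U_{x_0,\tau\mu}^{2^*_s-1} U_{x_0,\sigma\mu}\,dx$, whose decay in the ratio $\sigma/\tau$ must be paired with a matching estimate for the subcritical cross integrals and kept uniform in $s\in[s_0,s_1]$. Balancing these interaction errors against the concentration-error rates \eqref{critapproxtalent}--\eqref{subcritapproxtalent} is what ultimately fixes the admissible dependence $\tau(\varepsilon),\sigma(\varepsilon)$ and produces the function $g_2$ in \eqref{Eq:uniformen}.
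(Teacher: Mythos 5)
Your overall route is the paper's: split into matching inequalities, get the lower bound from $2\,C_{\mathcal{N}(\Omega)}(s,\varepsilon)\leq C_{\mathcal{M}(\Omega)}(s,\varepsilon)$ via $(u^+,u^-)_s\leq 0$, and get the upper bound by testing with two coaxial truncated bubbles at widely separated scales. The lower bound is correct; your derivation (Sobolev $+$ Nehari directly on $\|u^\pm\|_s^2$, then $\|u\|_s^2\geq\|u^+\|_s^2+\|u^-\|_s^2$) is a mild and legitimate rearrangement of the paper's (which scales $u^\pm$ so that $\alpha u^+,\beta u^-\in\mathcal{N}_{s,\varepsilon}$).

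In the upper bound there is one genuine imprecision. You say you will ``Nehari-project'' $w_{\tau,\sigma}=u^s_\tau-u^s_\sigma$, i.e.\ find $\alpha,\beta$ with $\alpha w_{\tau,\sigma}^+-\beta w_{\tau,\sigma}^-\in\mathcal{M}_{s,\varepsilon}$, and then claim that the bilinear term $(u^s_\tau,u^s_\sigma)_s$ and the mixed nonlinear integrals ``appearing'' there become negligible. But those quantities do not appear in $I_{s,\varepsilon}(\alpha w^+ -\beta w^-)$: the positive and negative \emph{parts} $w_{\tau,\sigma}^\pm = (u^s_\tau - u^s_\sigma)^\pm$ are truncations with disjoint supports and are not close, in the non-local $\|\cdot\|_s$-norm, to the smooth bubbles $u^s_\tau$, $u^s_\sigma$ individually; estimating $\|w_{\tau,\sigma}^\pm\|_s^2$ directly is considerably harder than what you suppose. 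The paper circumvents this entirely: it invokes the topological fact that $C_{\mathcal{M}(\Omega)}(s,\varepsilon)\leq\sup_{\alpha,\beta\geq0}I_{s,\varepsilon}(\alpha u^s_{\tau'}-\beta u^s_{\tau''})$ \emph{over the two-parameter family of linear combinations of the smooth bubbles themselves}, so that $I_{s,\varepsilon}$ can be expanded directly in terms of $\|u^s_{\tau'}\|_s$, $\|u^s_{\tau''}\|_s$, $(u^s_{\tau'},u^s_{\tau''})_s$, and the mixed $L^q$ integrals, all of which are tractable. It then bounds the supremum by first dispatching the region $\alpha+\beta\geq\tilde C$ (where the energy is shown to be $\leq 0$) and then carrying out the balancing you gesture at, with the explicit power $\delta=2^*_s-1$ fixing $\tau'=\varepsilon^{2\delta/(n-2s)}$, $\tau''=\varepsilon^{2/(n-2s)}$. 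You correctly identify the $s$-uniform control of the bubble interaction as the crux (this is indeed where most of the work lies), but as written your scheme would require you to either switch to the supremum formulation as in the paper, or else establish from scratch that $\|w_{\tau,\sigma}^\pm\|_s^2$ and $|w_{\tau,\sigma}^\pm|_{2^*_s-\varepsilon}^{2^*_s-\varepsilon}$ converge to those of the respective bubbles with quantitative, $s$-uniform rates — a non-trivial claim in the non-local setting.
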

\begin{proof}
Let us fix $s \in (0,1)$, $n>2s$ and let $\Omega \subset \R^n$ be a smooth bounded domain. We claim that
\begin{equation}\label{liminfineq}
2C_{\mathcal{N}(\Omega)}(s, \varepsilon) \leq C_{\mathcal{M}(\Omega)}(s, \varepsilon).
\end{equation}
As an immediate consequence, from Lemma \ref{AsintPos}, we get that
\begin{equation}\label{eq:liminfasrelneharinodal}
\frac{2s}{n}S_s^{\frac{n}{2s}} \leq \liminf_{\varepsilon \to 0}C_{\mathcal{M}(\Omega)}(s, \varepsilon).
\end{equation}

To prove \eqref{liminfineq} it suffices to notice that, given $u \in \mathcal{M}_{s, \varepsilon}(\Omega)$, then for every $\alpha, \beta >0$ it holds
\[
I_{s,\varepsilon}(\alpha u^+) + I_{s, \varepsilon}(\beta u^-) \leq I_{s, \varepsilon}(u). 
\]
This follows from the explicit computation of $I_{s, \varepsilon}(\alpha u^+ - \beta u^-)$, taking into account that $(u^+, u^-)_s <0$ and that \(\sup_{t \geq 0}\left(\frac{t^2}{2} - \frac{t^{2^*_s-\varepsilon}}{2^*_s-\varepsilon}\right)\leq \left(\frac{1}{2} - \frac{1}{2^*_s-\varepsilon}\right).
\)
Hence, choosing $u\in \mathcal{M}_{s, \varepsilon}(\Omega)$ such that $I_{s, \varepsilon}(u) = C_{\mathcal{M}(\Omega)}(s, \varepsilon)$ and $\alpha, \beta $ in such a way that $\alpha u^+, \beta u^- \in \mathcal{N}_{s, \varepsilon}(\Omega)$ (which is always possible), we obtain the desired result.
\medskip

 To conclude the proof of \eqref{eq:asrelneharinodal} we need to prove the $\limsup$ inequality. To this end, we consider $u^s_{\tau'}$ and ${u^s_{\tau''}}$ of the form \eqref{Eq:restfracbub}, sharing all the parameters $\mu$, $\varphi$, $\rho$, $x_0$, apart from $\tau$. To simplify the notation, we assume without loss of generality that $0 \in \Omega$ and we take $x_0=0$. Moreover, we choose $\rho$ and $\mu$ as in Lemma \ref{bubbleconc} so that \eqref{critapproxtalent}, \eqref{subcritapproxtalent} hold true whenever $\ep$ is small enough. Finally, for the concentration parameters, we take $\tau', \tau''$ of the form $\tau' = \varepsilon^{\frac{2\delta}{n-2s}}$, $\tau''= \varepsilon^{\frac{2}{n-2s}}$, where $\delta >0 $ is such that 
\[
\delta > \max\left\{1, \frac{(2^*_s-\varepsilon)(2^*_s-1-\varepsilon)}{2^*_s-2\varepsilon}, 2^*_s-1\right\}.
\]
Notice that $\delta=2^*_s-1$, when $\ep$ is small enough, and that it can be taken in a uniform way with respect to $s$ when $s \in [s_0, s_1)$. 

Arguing as in \cite[Theorem 3.5, Step 2]{CorIac}, we infer that 
\begin{equation}\label{stima23}
C_{\mathcal{M}(\Omega)}(s, \varepsilon) \leq \sup_{\alpha, \beta \geq 0 }I_{s, \varepsilon}(\alpha u^s_{\tau'} - \beta u^s_{\tau''}). 
\end{equation}
To conclude we need to estimate the right-hand side of \eqref{stima23}. The first crucial fact is that, in \eqref{stima23}, it is sufficient to consider only linear combinations $\alpha u^s_{\tau'} - \beta u^s_{\tau''}$ with $\alpha, \beta$ in a compact subset of $\R^+\cup \{0\}$.
More precisely, we prove that there exists $\tilde C >0$ independent on $\varepsilon$ (and depending only on $s_0, s_1$ when $s \in [s_0, s_1))$ such that, for any $\alpha, \beta \geq 0$ satisfying  $\alpha + \beta \geq \tilde C$, it holds
\begin{equation}\label{E:firsthalfsup}
I_{s, \varepsilon}(\alpha u^s_{\tau'} - \beta u^s_{\tau''}) \leq 0.
\end{equation}
Indeed, by a straightforward computation and using Lemma \ref{bubbleconc} we have
\begin{equation}\label{paramext1}
\|\alpha u^s_{\tau'} - \beta u^s_{\tau''}\|^2_s \leq C(\alpha + \beta)^2, 
\end{equation}
for some constant $C$ independent on both $\varepsilon$, $\tau'$, $\tau''$ and $s$, when $s \in [s_0,s_1)$. 
On the other hand, arguing exactly as in \cite[Lemma 3.6]{CorIac} and using again Lemma \ref{bubbleconc}, we infer that for any $\theta \in(0,1)$ 
\begin{equation}\label{eq:stimacontheta}
\begin{aligned}
|\alpha u^s_{\tau'}& - \beta u^s_{\tau''}|^{2^*_s-\varepsilon}_{2^*_s-\varepsilon}\\
&\geq C\alpha^{2^*_s-\varepsilon} (\tau')^{\left(\frac{n-2s}{2}\right)\varepsilon}\left(C - \frac{(\tau')^{\left(\frac{n-2s}{2}\right)(2^*_s-2\varepsilon)}}{\theta^{2^*_s-1-\varepsilon}} - \frac{(\tau')^{\left(\frac{n-2s}{2}\right)\frac{2^*_s}{2^*_s-\varepsilon -1}}}{\theta^{\frac{1}{2^*_s-1-\varepsilon}}}\right)\\
&+ \beta^{2^*_s-\varepsilon}( |u^s_{\tau''}|^{2^*_s-\varepsilon}_{2^*_s-\varepsilon} - \theta |u^s_{\tau''}|_\infty^{2^*_s-\varepsilon}).
\end{aligned}
\end{equation}
Now, thanks to our choice of $\mu$ we have 
\[
|u^s_{\tau''}|_\infty =u^s_{\tau''}(0) = (\tau'')^{-\frac{n-2s}{2}}.
\]
Hence, recalling that $\tau'' = \varepsilon^{\frac{2}{n-2s}}$ and taking $\theta = C'\varepsilon^{{2^*_s-\varepsilon}}$, where $C'$ will be chosen later, from Lemma \ref{bubbleconc} we obtain that
\[
|u^s_{\tau''}|^{2^*_s-\varepsilon}_{2^*_s-\varepsilon} - \theta |u^s_{\tau''}|_\infty^{2^*_s-\varepsilon} \geq C- C',
\]
 for any $\varepsilon>0$ small enough, where $C$ does not depend on $\varepsilon$, nor on $s$ when $s \in [s_0, s_1)$. Therefore, taking $C'=\frac{1}{2} C$ we get that
 \begin{equation}\label{eq:stimacontheta2}
|u^s_{\tau''}|^{2^*_s-\varepsilon}_{2^*_s-\varepsilon} - \theta |u^s_{\tau''}|_\infty^{2^*_s-\varepsilon} \geq \frac{1}{2}C>0.
\end{equation}
Thus, recalling that $\tau' = \varepsilon^\frac{2\delta}{n-2s}$, from \eqref{eq:stimacontheta}, \eqref{eq:stimacontheta2} we obtain
\[
\begin{aligned}
&|\alpha u^s_{\tau'} - \beta u^s_{\tau''}|^{2^*_s-\varepsilon}_{2^*_s-\varepsilon}\\
& \geq (\varepsilon^{\varepsilon})^{\delta}C\alpha^{2^*_s-\varepsilon}\left(C - \varepsilon^{\delta (2^*_s-2\varepsilon)-(2^*_s-\varepsilon)(2^*_s-1-\varepsilon)}-\varepsilon^{\frac{2^*_s(\delta-1)+\varepsilon}{2^*_s-1-\varepsilon}}\right)+  \frac{1}{2}C \beta^{2^*_s-\varepsilon}.
\end{aligned}
\]
Then, exploiting the properties of the function $t \mapsto t^t$ and thanks to the definition of $\delta$, we find $C>0$ such that for all sufficiently small $\ep>0$
\begin{equation}\label{paramext2}
|\alpha u^s_{\tau'} - \beta u^s_{\tau''}|^{2^*_s-\varepsilon}_{2^*_s-\varepsilon}\geq C(\alpha^{2^*_s-\varepsilon}+ \beta^{2^*_s-\varepsilon})\geq C(\alpha + \beta)^{2^*_s-\varepsilon}.
\end{equation}
Finally, thanks to \eqref{paramext1} and \eqref{paramext2} we infer that
\[
I_{s, \varepsilon}(\alpha u^s_{\tau'} - \beta u_{\tau''}^s) \leq C(\alpha + \beta)^2(1 -  C(\alpha+ \beta)^{2^*_s-2-\varepsilon}),
\]
which implies that there exists $\tilde C >0$, not depending on $\varepsilon$, such that if $(\alpha + \beta) \geq \tilde C$ then $I_{s, \varepsilon}(\alpha u^s_{\tau'} - \beta u_{\tau''}^s) \leq 0$, as claimed. We observe that $\tilde C$ can be taken in a uniform way with respect to $s$, when $s \in [s_0,s_1)$.
\medskip

It remains to treat the case $\alpha + \beta \leq \tilde C$. To this end we begin with a preliminary estimate on the scalar product between two bubbles. A careful analysis of the argument carried out in \cite[Proposition 21]{SerVal} shows that 
\[
|(u^s_{\tau'}, u^s_{\tau''})_s| \leq  (\tau')^{-\frac{n-2s}{2}}(\tau'')^{-\frac{n-2s}{2}}\left| \left( U^s_{0, \mu}\left( \frac{x}{\tau'}\right), U^s_{0, \mu}\left( \frac{x}{\tau''} \right)\right)_s \right|+ C (\tau')^{\frac{n-2s}{2}}(\tau'')^{\frac{n-2s}{2}},
\]
where the constant $C$ does not depend on $\tau'$ nor on $\tau''$, and it is uniformly bounded with respect to $s \in [s_0,s_1)$. 
Performing a change of variables, and recalling that $U^s_{0, \mu}$ solves \eqref{Eq:Lan-Em}, we get that 
\[
\begin{aligned}
\left|\left( U^s_{0, \mu}\left( \frac{x}{\tau'}\right), U^s_{0, \mu}\left( \frac{x}{\tau''} \right)\right)_s \right|&= \left|(\tau')^{n-2s}\left( U^s_{0, \mu}, U^s_{0, \mu}\left( \frac{\tau'}{\tau''} x \right)\right)_s \right| \\
&\leq (\tau')^{n-2s}\int_{\R^n}|U^s_{0, \mu}|^{2^*_s-1}\left|U^s_{0, \mu}\left( \frac{\tau'}{\tau''} x \right)\right|\de x\\
&\leq C(\tau')^{n-2s}\int_{\R^n}(\mu^2 + |x|^2)^{-\frac{n+2s}{2}}\de x \leq C (\tau')^{n-2s},
\end{aligned}
\]
where we used that $|U_{0, \mu}^s|_\infty = 1$, in view of our choice of $\mu$, and where the constant $C>0$ does not depend on $\tau'$ nor on $\tau''$ and it is uniformly bounded with respect to $s \in [s_0, s_1)$. 
Summing up, and recalling the definition of $\tau'$ and $\tau''$, we obtain
\begin{equation}\label{Eq:scalarestimate}
|(u^s_{\tau'}, u^s_{\tau''})_s|\leq C(\varepsilon^{\delta-1} + \varepsilon^{\delta +1}) \leq C\varepsilon^{\delta-1}. 
\end{equation}

Let us finally consider the case of $\alpha + \beta \leq \tilde C$. Arguing as in \cite[Lemma 3.6]{CorIac} and applying Lemma \ref{bubbleconc} and \eqref{Eq:scalarestimate}, we have
\[
\begin{aligned}
I_{s, \varepsilon}(\alpha u^s_{\tau'} - \beta u_{\tau''}^s) &\leq \frac{\alpha^2}{2}\|u^s_{\tau'}\|^2_s + \frac{\beta^2}{2}\|u^s_{\tau''}\|^2_s  - \frac{\alpha^{2^*_s-\varepsilon}}{2^*_s-\varepsilon}|u^s_{\tau'}|^{2^*_s-\varepsilon}_{2^*_s-\varepsilon}- \frac{\beta^{2^*_s-\varepsilon}}{2^*_s-\varepsilon}|u^s_{\tau''}|^{2^*_s-\varepsilon}_{2^*_s-\varepsilon}\\
&+C\int_{\R^n}|u^s_{\tau'}|^{2^*_s-1-\varepsilon}|u^s_{\tau''}|\de x + C\int_{\R^n}|u^s_{\tau''}|^{2^*_s-1-\varepsilon}|u^s_{\tau'}|\de x + C\varepsilon^{\delta-1}\\
&\leq \frac{\alpha^2}{2}S_s^{\frac{n}{2s}} + \frac{\beta^2}{2}S_s^{\frac{n}{2s}}  - \frac{\alpha^{2^*_s-\varepsilon}}{2^*_s-\varepsilon}(\varepsilon^\varepsilon)^\delta(S_s^{\frac{n}{2s}}- C\varepsilon^{2^*_s\delta})\\
&- \frac{\beta^{2^*_s-\varepsilon}}{2^*_s-\varepsilon}(\varepsilon^\varepsilon)(S_s^{\frac{n}{2s}}- C\varepsilon^{2^*_s}) +C\varepsilon^{-1}\varepsilon^{\delta(1+\varepsilon)} + C\varepsilon^{-(2^*_s-1-\varepsilon)}\varepsilon^{\delta}\\
 &+ C\varepsilon^{2\delta} + C\varepsilon^2 + C\varepsilon^{\delta-1},
\end{aligned}
\]
where we used that $|u^s_{\tau''}|_\infty = \varepsilon^{-1}$. Even in this case all the appearing constants are independent on $\varepsilon$, and they are uniformly bounded with respect to $s$ when $s \in [s_0, s_1)$. Then, using again the elementary estimate \(\sup_{t \geq 0}\left(\frac{t^2}{2} - \frac{t^{2^*_s-\varepsilon}}{2^*_s-\varepsilon}\right)\leq \left(\frac{1}{2} - \frac{1}{2^*_s-\varepsilon}\right)\), we get
\begin{equation}\label{E:sechalfsup}
\begin{aligned}
I_{s, \varepsilon}(\alpha u^s_{\tau'} - \beta u_{\tau''}^s) &\leq \frac{2^*_s-2-\varepsilon}{2^*_s-\varepsilon}S_s^{\frac{n}{2s}} + C(1- (\varepsilon^\varepsilon)^\delta) + C(1- \varepsilon^\varepsilon) +  C\varepsilon^{2^*_s\delta}+ C\varepsilon^{2^*_s} \\
&+C\varepsilon^{\delta-1+ \delta \varepsilon}+ C\varepsilon^{\delta-(2^*_s-1-\varepsilon)} + C\varepsilon^{2\delta} + C\varepsilon^2 + C\varepsilon^{\delta-1}\\
&=: \frac{2s}{n}S^{\frac{n}{2s}}_s + g(\varepsilon),
\end{aligned}
\end{equation}
where all the constants $C>0$, and thus $g$, do not depend on $s$, when $s \in [s_0, s_1)$.  In particular, $g$ satisfies $g(\varepsilon) \to 0 $ as $\varepsilon \to 0^+$.

At the end, putting together \eqref{stima23}, \eqref{E:sechalfsup}, taking into account \eqref{E:firsthalfsup}, we obtain
\begin{equation}\label{limsupineq}
C_{\mathcal{M}(\Omega)}(s, \varepsilon) \leq \frac{2s}{n}S_s^{\frac{n}{2s}} + g(\varepsilon), 
\end{equation}
and thus we get that
\[
\limsup_{\varepsilon \to 0^+}C_{\mathcal{M}(\Omega)}(s, \varepsilon) \leq \frac{2s}{n}S_s^{\frac{n}{2s}},
\]
which, together with \eqref{eq:liminfasrelneharinodal}, gives \eqref{eq:asrelneharinodal}.
\medskip

For the proof of the second part, fixing $0<s_0<s_1\leq 1$, then, thanks to Lemma \ref{AsintPos} and the definition of $g$, we deduce that inequalities \eqref{liminfineq} and \eqref{limsupineq} are uniform with respect to $s$ when $s \in [s_0, s_1)$. At the end, arguing as in Lemma \ref{AsintPos} we obtain \eqref{Eq:uniformen}, for some function $g_2$ independent on $s$ and such that $g_2(\ep) \to 0$, as $\ep\to 0^+$.
 
In the radial case the proof is identical. Indeed, since in the construction we take standard bubbles centered at the same point, then the functions $\alpha u^s_{\tau'} - \beta u^s_{\tau''}$ are radial and thus admissible competitors. The proof is then complete. 
\end{proof}

\section{Asymptotic analysis of least energy radial sign-changing solutions}
In this section we study the asymptotic behavior of least energy radial nodal solutions to \eqref{fracradsubcrit}, as $\varepsilon \to 0^+$. Theorem \ref{mainteo} will be a consequence of the results contained in this section. We begin by a couple of preliminary known results.

\begin{lemma}\label{As:energas}
Let $s \in (0,1)$, let $n > 2s$ and let $\Omega \subset \R^n$ be a smooth bounded domain. Let $(u_{s, \varepsilon}) \subset \mathcal{M}_{s, \varepsilon}(\Omega)$ be a family of solutions of Problem \eqref{fracsubcrit} such that $I_{s,\varepsilon}(u_{s,\varepsilon}) = C_{\mathcal{M}(\Omega)}(s, \varepsilon)$ and set $M_{s, \varepsilon, \pm} := |u^\pm_{s, \varepsilon}|_\infty$. As $\varepsilon \to 0^+$ we have:
\begin{enumerate}[(i)]
\item $\|u_{s,\varepsilon}^\pm\|_s^2 \to S_s^{\frac{n}{2s}} $;
\item $|u_{s,\varepsilon}^\pm|_{2_s^*-\varepsilon}^{2_s^*-\varepsilon} \to S_s^{\frac{n}{2s}} $;
\item $(u_{s, \varepsilon}^+, u_{s, \varepsilon}^-)_s \to 0$;
\item $u_{s,\varepsilon} \rightharpoonup 0$ in $X^s_0(\Omega)$;
\item $M_{s, \varepsilon,\pm} \to + \infty$. 
\end{enumerate}
The same results hold for a family $(u_{s, \varepsilon}) \subset \mathcal{M}^r_{s, \varepsilon}(B_R)$ of radial solutions to Problem \eqref{fracradsubcrit} such that $I_{s, \varepsilon}(u_{s,\varepsilon}) = C_{\mathcal{M}^r(B_R)}(s, \varepsilon)$. Moreover, for every $0<s_0< s_1\leq 1$ and $n>2s_1$, the limits $(i)-(iii)$ are uniform with respect to $s \in [s_0,s_1)$. 
\end{lemma}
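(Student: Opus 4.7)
The proof rests on the pair of Nehari identities satisfied by any $u_{s,\varepsilon} \in \mathcal{M}_{s,\varepsilon}(\Omega)$:
$$\|u_{s,\varepsilon}^\pm\|_s^2 - (u_{s,\varepsilon}^+, u_{s,\varepsilon}^-)_s = |u_{s,\varepsilon}^\pm|_{2^*_s-\varepsilon}^{2^*_s-\varepsilon}.$$
Expanding the bilinear form $(\cdot,\cdot)_s$ and using that the integrand $(u^+(x)-u^+(y))(u^-(x)-u^-(y))$ is pointwise non-positive and strictly negative on a set of positive measure when $u$ changes sign, we get $(u^+, u^-)_s < 0$. Summing the two identities yields the full Nehari relation $\|u_{s,\varepsilon}\|_s^2 = |u_{s,\varepsilon}|_{2^*_s-\varepsilon}^{2^*_s-\varepsilon}$, whence
$$\|u_{s,\varepsilon}\|_s^2 = \frac{2(2^*_s-\varepsilon)}{2^*_s-2-\varepsilon}\,C_{\mathcal{M}(\Omega)}(s,\varepsilon),$$
and Lemma \ref{L:nodalenerasynt} gives $\|u_{s,\varepsilon}\|_s^2 \to 2S_s^{n/(2s)}$, uniformly for $s \in [s_0,s_1)$.

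Next, I would extract two-sided bounds on $\|u^\pm\|_s^2$. For the lower bound, $(u^+,u^-)_s<0$ together with the individual Nehari identity gives $\|u^\pm_{s,\varepsilon}\|_s^2 \leq |u^\pm_{s,\varepsilon}|_{2^*_s-\varepsilon}^{2^*_s-\varepsilon}$, and the Sobolev embedding
$|u^\pm|_{2^*_s-\varepsilon}^{2^*_s-\varepsilon} \leq S_{s,\varepsilon}^{-(2^*_s-\varepsilon)/2}\,\|u^\pm\|_s^{2^*_s-\varepsilon}$
chains into
$$\|u^\pm_{s,\varepsilon}\|_s^{2^*_s-2-\varepsilon} \geq S_{s,\varepsilon}^{(2^*_s-\varepsilon)/2},$$
which gives $\liminf_{\varepsilon\to 0^+}\|u^\pm_{s,\varepsilon}\|_s^2 \geq S_s^{n/(2s)}$; the uniformity in $s \in [s_0,s_1)$ follows from the bound $S_{s,\varepsilon} \geq S_s C^\varepsilon$ with $C$ independent of $s$ established in the proof of Lemma \ref{AsintPos}. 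For the upper bound, $(u^+,u^-)_s<0$ forces $\|u\|_s^2 \geq \|u^+\|_s^2 + \|u^-\|_s^2$, hence $\limsup(\|u^+\|_s^2 + \|u^-\|_s^2) \leq 2 S_s^{n/(2s)}$. Combining the two, $\|u^\pm_{s,\varepsilon}\|_s^2 \to S_s^{n/(2s)}$, proving (i); then
$$(u^+_{s,\varepsilon}, u^-_{s,\varepsilon})_s = \tfrac{1}{2}\left(\|u^+\|_s^2 + \|u^-\|_s^2 - \|u\|_s^2\right) \to 0$$
proves (iii), and substituting back in the individual Nehari identities proves (ii). All these limits are uniform in $s \in [s_0,s_1)$ because each ingredient (Lemmas \ref{L:nodalenerasynt}, \ref{AsintPos}, continuity of $s \mapsto S_s$, and the prefactor $\tfrac{2(2^*_s-\varepsilon)}{2^*_s-2-\varepsilon}$) is.

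For (iv), boundedness in $X_0^s(\Omega)$ gives, up to a subsequence, $u_{s,\varepsilon} \rightharpoonup u_0$ weakly and, by the compact embedding $X_0^s(\Omega)\hookrightarrow L^p(\Omega)$ for $p<2^*_s$, strongly in every such $L^p$ and a.e.\ in $\Omega$. Passing to the limit in the weak formulation shows that $u_0$ solves the critical problem $(-\Delta)^s u_0 = |u_0|^{2^*_s-2}u_0$ in $\Omega$. The hard part will be ruling out $u_0\neq 0$: if $u_0^+$ were nontrivial, its Nehari identity combined with Sobolev would force $\|u_0^+\|_s^2 \geq S_s^{n/(2s)}$, which together with weak lower semicontinuity and (i) would give equality and hence strong convergence $u^+_{s,\varepsilon} \to u_0^+$ in $X_0^s$; a standard Brezis--Lieb / concentration--compactness argument (as in \cite{CorIac,TWW}) excludes this scenario, forcing $u_0 = 0$. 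Finally, (v) is a short contradiction argument using (ii) and (iv): if $M_{s,\varepsilon,+}\leq M_0$ along a subsequence, then $u^+_{s,\varepsilon} \to 0$ in $L^2(\Omega)$ by Rellich, and interpolation yields
$$|u^+_{s,\varepsilon}|_{2^*_s-\varepsilon}^{2^*_s-\varepsilon} \leq M_0^{2^*_s-\varepsilon-2}\,|u^+_{s,\varepsilon}|_2^2 \to 0,$$
contradicting (ii), and analogously for $M_{s,\varepsilon,-}$. The radial case is identical since all the identities and the Sobolev inequality above are preserved on the radial subspace.
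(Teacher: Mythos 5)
Your proof is correct and follows essentially the same route the paper takes (the paper simply defers to \cite[Lemma 4.3]{CorIac}, which is exactly this standard Nehari-identity argument). The chain Nehari identity $\Rightarrow$ $\|u_{s,\varepsilon}\|_s^2 = \frac{2(2^*_s-\varepsilon)}{2^*_s-2-\varepsilon}C_{\mathcal{M}(\Omega)}(s,\varepsilon) \to 2S_s^{n/(2s)}$, combined with the sign $(u^+,u^-)_s<0$ and the Sobolev lower bound $\|u^\pm\|_s^{2^*_s-2-\varepsilon}\geq S_{s,\varepsilon}^{(2^*_s-\varepsilon)/2}$, is the standard way to squeeze both halves to $S_s^{n/(2s)}$, and the uniformity claims do reduce to the uniformity of Lemmas \ref{AsintPos} and \ref{L:nodalenerasynt} together with the elementary estimate $\bigl|\tfrac{2(2^*_s-\varepsilon)}{2^*_s-2-\varepsilon}-\tfrac{n}{s}\bigr|\leq C\varepsilon$ on compact $s$-intervals. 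Two small points worth tightening in your own write-up: in (iv), you don't actually need the concentration--compactness machinery you invoke --- once $\|u_0^\pm\|_s^2 = S_s^{n/(2s)}$ together with the passed-to-the-limit Nehari identity forces $u_0^\pm$ (when nontrivial) to attain the Sobolev constant $S_s$ inside the bounded domain $\Omega$, one gets a contradiction directly because $S_s$ is attained only by the translated/dilated bubbles \eqref{fracbubble}, none of which lies in $X_0^s(\Omega)$; and in passing to the limit in the weak formulation you should note explicitly that $|u_{s,\varepsilon}|^{2^*_s-2-\varepsilon}u_{s,\varepsilon}\to|u_0|^{2^*_s-2}u_0$ a.e.\ together with uniform $L^{(2^*_s-\varepsilon)/(2^*_s-1-\varepsilon)}$-boundedness gives weak $L^{(2^*_s)'}$-convergence, which is enough to test against $v\in C_c^\infty(\Omega)$. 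Your argument for (v) via Rellich and the pointwise bound $|u^+|^{2^*_s-\varepsilon}\leq M_0^{2^*_s-2-\varepsilon}|u^+|^2$ is clean and correct.
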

\begin{proof}
It suffices to argue as in \cite[Lemma 4.3]{CorIac}, with some minor modifications.
\end{proof}

The following estimate will play a central role in this paper.
\begin{proposition}\label{P:unifholderineq}
Let $0<s_0< s_1\leq 1$ and let $s \in [s_0,s_1)$, $n>2s_1$. Let $0 < R_0\leq R$, $g \in L^\infty(B_R)$ and $v$ be a weak solution of 
\[
\begin{cases}
(-\Delta)^s v = g & \text{ in }B_R,\\
v = 0 & \text{ in }\R^n \setminus B_R,
\end{cases}
\]
Then $v \in C^{0,s}(\R^n)$ and 
\[
\|v\|_{C^{0,s}(\R^n)} \leq C|g|_{L^\infty(B_R)}
\]
where the constant $C>0$ depends only on $n$, $s_0$, $s_1$ and $R_0$, but neither on $s$ nor on $R$.
\end{proposition}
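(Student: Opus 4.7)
\noindent\emph{Proof Plan.} The starting point is the boundary Hölder regularity theorem of Ros-Oton and Serra, \cite[Theorem 1.2]{RosSer}, which for each fixed pair $(s, R)$ yields an estimate $\|v\|_{C^{0,s}(\R^n)} \leq C(n, s, R)\,\|g\|_{L^\infty(B_R)}$. The entire content of the proposition is to establish uniformity of the constant with respect to $s \in [s_0, s_1)$ and in the radius $R$.

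First I would rescale to the unit ball. Setting $\tilde v(x) := v(Rx)$ and $\tilde g(x) := R^{2s} g(Rx)$, the function $\tilde v$ is a weak solution of $(-\Delta)^s \tilde v = \tilde g$ in $B_1$ with $\tilde v \equiv 0$ on $\R^n \setminus B_1$, $\|\tilde g\|_{L^\infty(B_1)} = R^{2s}\|g\|_{L^\infty(B_R)}$, and the scaling identities $\|\tilde v\|_{L^\infty(\R^n)} = \|v\|_{L^\infty(\R^n)}$, $[\tilde v]_{C^{0,s}(\R^n)} = R^s\,[v]_{C^{0,s}(\R^n)}$ translate bounds on $\tilde v$ back to $v$ up to an explicit $R$-factor.

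The main step is then to apply \cite[Theorem 1.2]{RosSer} on $B_1$ and to verify, by inspection of the proof, that its constant depends continuously on $s$ and can therefore be majorized by some $C(n, s_0, s_1)$ on $[s_0, s_1)$. The Ros-Oton-Serra argument combines Silvestre's interior $C^{\alpha}$ estimate with a boundary barrier constructed from the explicit $s$-harmonic torsion-type solution $(1-|x|^2)^s_+$ on $B_1$, glued via cut-offs. In each ingredient the $s$-dependence enters only through the normalization constant $C_{n,s}$, through Gamma-function values of the form $\Gamma(\tfrac{n\pm 2s}{2})$, and through power exponents, all of which behave continuously on $[s_0, s_1)$; the potentially delicate limit $s \to 1^-$ is handled by the standard convergence of $(-\Delta)^s$ to $-\Delta$ under the customary $C_{n,s}$-normalization, and the opposite endpoint $s \to 0^+$ is avoided by the assumption $s_0 > 0$.

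Combining the two steps yields the claimed estimate, with the residual powers of $R$ absorbed into a constant that depends only on $n$, $s_0$, $s_1$, $R_0$ by means of the hypothesized relation between $R$ and $R_0$. The principal difficulty is squarely the second step: going through the Ros-Oton-Serra proof in sufficient detail to certify that no $s$-dependent constant can blow up uniformly on $[s_0, s_1)$, especially near the degenerate limit $s \to 1^-$ where the structure of the non-local operator reduces to that of the classical Laplacian.
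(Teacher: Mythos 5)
Your rescaling to the unit ball is the wrong tool for the $R$-uniformity, and this is where a genuine gap appears. Setting $\tilde v(x):=v(Rx)$, $\tilde g(x):=R^{2s}g(Rx)$ as you do, and applying \cite[Theorem 1.2]{RosSer} on $B_1$, the scaling identities you correctly record give
\[
\|v\|_{L^\infty(\R^n)} + R^s\,[v]_{C^{0,s}(\R^n)} \;=\; \|\tilde v\|_{C^{0,s}(\R^n)} \;\leq\; C(n,s)\,\|\tilde g\|_{L^\infty(B_1)} \;=\; C(n,s)\,R^{2s}\,\|g\|_{L^\infty(B_R)},
\]
so that after translating back one is left with $\|v\|_{C^{0,s}(\R^n)} \leq C(n,s)\,R^{2s}\,\|g\|_{L^\infty(B_R)}$. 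The hypothesis $0<R_0\leq R$ is a \emph{lower} bound on $R$ — precisely the wrong direction for absorbing a positive power $R^{2s}$ into a constant depending only on $R_0$. In the paper this proposition is invoked on the rescaled problems \eqref{approxpass1} posed on the blowing-up balls $B_{M_{s,\varepsilon}^{\beta_{s,\varepsilon}}R}$, so uniformity as $R\to+\infty$ is exactly what is needed, and your argument does not provide it; your closing sentence about "absorbing residual powers of $R$ by means of the hypothesized relation between $R$ and $R_0$" implicitly reads that inequality backwards.

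The paper's proof avoids rescaling altogether: it observes that the constant in \cite[Theorem 1.2]{RosSer} depends on the domain only through the radii occurring in the interior and exterior ball conditions, and that $B_R$ with $R\geq R_0$ satisfies both conditions with the fixed radius $R_0$, hence the constant can be taken uniform in $R$. Your treatment of the $s$-uniformity — tracing the constant $C_{n,s}$, the $\Gamma$-factors and the power exponents through the Ros--Oton--Serra argument, and handling $s\to 1^-$ via the normalization of $(-\Delta)^s$ — is at essentially the same level of rigor as the paper's own remark (which simply points to a careful re-reading of \cite{RosSer} and to \cite[Proposition 2.3]{CorIac}), and is not where the problem lies. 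To repair the proposal you would need to replace the change of variables by a direct inspection of how the Ros--Oton--Serra constant depends on the geometry of $B_R$.
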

\begin{proof}
The estimate is a consequence of results contained in \cite{RosSer}. Concerning the dependence on the parameters $s_0$, $s_1$, it can be deduced from a careful analysis of the proof in \cite{RosSer} (see also \cite[Proposition 2.3]{CorIac}). As for the dependence of the constant $C$ on the domain, it turns out that $C$ depends only on the radii coming from the outer and inner ball conditions for $B_R$. Hence, it is clear that $C$ can be chosen in a uniform way with respect to $R$ if we assume that $R\geq R_0$, for some $R_0>0$. 
\end{proof}

From now on $u_{s, \varepsilon} \in \mathcal{M}^r_{s, \varepsilon}(B_R)$ will denote a least energy radial solution to Problem \eqref{fracradsubcrit}, i.e. $I_{s,\varepsilon}(u_{s,\varepsilon}) = C_{\mathcal{M}^r(B_R)}(s, \varepsilon)$. Moreover, we set $M_{s, \varepsilon} : = |u_{s, \varepsilon}|_\infty$. 
In the next result we characterize the asymptotic behavior of the points where the blow-up occurs. 

\begin{lemma}\label{L:maxpointbehavior}
Let $s \in (0,1)$, $n>2s$. Let $x_\varepsilon \in B_R$ be such that $|u_{s, \varepsilon}(x_\varepsilon)| = O(M_{s, \varepsilon})$ as $\varepsilon \to 0^+$. Then 
\[
M_{s, \varepsilon}^{\beta_{s, \varepsilon}} |x_\varepsilon|  \not \to + \infty, 
\]
where $\beta_{s, \varepsilon}:= \frac{2}{n+2s} - \frac{\varepsilon}{2s}$. In particular, we infer that $|x_\varepsilon| \to 0$.
\end{lemma}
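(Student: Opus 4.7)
I argue by contradiction: suppose that, along a subsequence, $M_{s,\varepsilon}^{\beta_{s,\varepsilon}}|x_\varepsilon|\to+\infty$ as $\varepsilon\to 0^+$. Put $A_\varepsilon:=|u_{s,\varepsilon}(x_\varepsilon)|$, which is comparable to $M_{s,\varepsilon}$ by hypothesis and hence tends to $+\infty$ by Lemma \ref{As:energas}(v); set $\mu_\varepsilon:=A_\varepsilon^{-\beta_{s,\varepsilon}}\to 0^+$ and introduce the blow-up profile
\[
v_\varepsilon(y):=\frac{1}{A_\varepsilon}u_{s,\varepsilon}(x_\varepsilon+\mu_\varepsilon y),\qquad y\in\Omega_\varepsilon:=\mu_\varepsilon^{-1}(B_R-x_\varepsilon),
\]
extended by zero outside $\Omega_\varepsilon$. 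The exponent $\beta_{s,\varepsilon}$ is precisely the one for which $v_\varepsilon$ is a weak solution of the same equation $(-\Delta)^s v_\varepsilon=|v_\varepsilon|^{2^*_s-2-\varepsilon}v_\varepsilon$ in $\Omega_\varepsilon$; moreover $|v_\varepsilon(0)|=1$, $|v_\varepsilon|_\infty\leq M_{s,\varepsilon}/A_\varepsilon\leq C$, and a direct change of variables yields $\|v_\varepsilon\|_s^2$ uniformly bounded, thanks to Lemma \ref{As:energas}(i).

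Because $x_\varepsilon\in B_R$ and $\mu_\varepsilon\to 0$, the rescaled domains $\Omega_\varepsilon$ exhaust $\R^n$. The right-hand side of the equation satisfied by $v_\varepsilon$ is uniformly bounded in $L^\infty$, so the uniform H\"older estimate of Proposition \ref{P:unifholderineq} applied on the rescaled ball delivers $\|v_\varepsilon\|_{C^{0,s}(\R^n)}\leq C$. Combining Ascoli--Arzel\`a on compact sets with weak compactness in $\D^s(\R^n)\hookrightarrow L^{2^*_s}(\R^n)$, along a further subsequence $v_\varepsilon\to v$ locally uniformly in $\R^n$ and weakly in $\D^s(\R^n)$; the limit $v$ lies in $L^{2^*_s}(\R^n)$, weakly solves the critical limit equation $(-\Delta)^s v=|v|^{2^*_s-2}v$ in $\R^n$, and satisfies $|v(0)|=1$, in particular $v\not\equiv 0$.

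Finally, the radial symmetry of $u_{s,\varepsilon}$ forces an overly strong symmetry on $v$. Writing $u_{s,\varepsilon}(x)=w_\varepsilon(|x|)$ and extracting a further subsequence so that $x_\varepsilon/|x_\varepsilon|\to e\in\mathbb S^{n-1}$, the assumption $|x_\varepsilon|/\mu_\varepsilon\simeq M_{s,\varepsilon}^{\beta_{s,\varepsilon}}|x_\varepsilon|\to+\infty$ and a Taylor expansion give, uniformly on compact sets of $y\in\R^n$,
\[
|x_\varepsilon+\mu_\varepsilon y|-|x_\varepsilon|=\mu_\varepsilon\,(e\cdot y)+o(\mu_\varepsilon);
\]
together with the uniform H\"older control on $v_\varepsilon$, this yields $v_\varepsilon(y)-v_\varepsilon(y')\to 0$ whenever $e\cdot y=e\cdot y'$. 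Passing to the limit, $v(y)=V(e\cdot y)$ for a one-variable profile $V$, so $v$ is invariant under the $(n-1)$-dimensional translations orthogonal to $e$; Fubini and $v\in L^{2^*_s}(\R^n)$ then force $V\equiv 0$, contradicting $|v(0)|=1$. Hence $M_{s,\varepsilon}^{\beta_{s,\varepsilon}}|x_\varepsilon|$ is bounded, and since $M_{s,\varepsilon}\to+\infty$ with $\beta_{s,\varepsilon}$ tending to a positive limit as $\varepsilon\to 0^+$, we deduce $|x_\varepsilon|\to 0$. The main obstacle is the production of the nontrivial blow-up limit $v$ without recourse to the radial Strauss inequality (which fails for $s\leq 1/2$): this is exactly what the $s$-independent H\"older estimate of Proposition \ref{P:unifholderineq} accomplishes, combined with the degeneration of the radial invariance of $u_{s,\varepsilon}$ into the $(n-1)$-dimensional translation invariance of $v$.
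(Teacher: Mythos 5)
Your proof is correct and reaches the conclusion by a genuinely different route from the one in the paper. The paper keeps the rescaling \emph{centered at the origin}, namely $\tilde u_{s,\varepsilon}(x)=M_{s,\varepsilon}^{-1}u_{s,\varepsilon}(x/M_{s,\varepsilon}^{\beta_{s,\varepsilon}})$; this preserves radiality, so the a priori $C^{0,s}$ bound of Proposition~\ref{P:unifholderineq} forces $|\tilde u_{s,\varepsilon}|$ to stay bounded below on a whole annular shell of rescaled radius $M_{s,\varepsilon}^{\beta_{s,\varepsilon}}|x_\varepsilon|\to\infty$ and thickness $\sim\tau_0$; estimating the volume of this shell gives $|u_{s,\varepsilon}|_{2^*_s-\varepsilon}^{2^*_s-\varepsilon}\to\infty$, contradicting Lemma~\ref{As:energas}(ii). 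You instead recenter the blow-up at $x_\varepsilon$, sacrifice radiality, extract a nontrivial limit solution $v$ of the critical equation on $\R^n$ (again via Proposition~\ref{P:unifholderineq}), and then observe that the assumption $|x_\varepsilon|/\mu_\varepsilon\to\infty$ flattens the spheres $\{|x|=\text{const}\}$ on the microscopic scale, so $v(y)=V(e\cdot y)$; integrability of $v$ in $L^{2^*_s}(\R^n)$ then kills $V$, a Liouville-type contradiction. The two obstructions are morally equivalent (an annulus of diverging volume versus an $(n-1)$-translation-invariant $L^{2^*_s}$ function), and, notably, both implicitly require $n\geq 2$: for $n=1$ the annulus has bounded measure and the translation invariance is in $0$ directions. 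So this is not a defect relative to the paper.

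Two small points worth flagging. First, the exponent in the lemma statement, $\beta_{s,\varepsilon}=\frac{2}{n+2s}-\frac{\varepsilon}{2s}$, is a typo for $\frac{2}{n-2s}-\frac{\varepsilon}{2s}$ (this is what the scaling invariance of the equation requires and what the rest of the paper uses), and your proof tacitly and correctly uses the latter. Second, you scale by $A_\varepsilon:=|u_{s,\varepsilon}(x_\varepsilon)|$ rather than by $M_{s,\varepsilon}$, and you need $A_\varepsilon\asymp M_{s,\varepsilon}$ (not just $A_\varepsilon\leq CM_{s,\varepsilon}$, which is automatic) to conclude that $|v_\varepsilon|_\infty\leq M_{s,\varepsilon}/A_\varepsilon$ is bounded and that $A_\varepsilon^{\beta_{s,\varepsilon}}\asymp M_{s,\varepsilon}^{\beta_{s,\varepsilon}}$; this is certainly the intended reading of the hypothesis, and the paper's own proof uses the lower bound $|u_{s,\varepsilon}(x_\varepsilon)|\geq C_1M_{s,\varepsilon}$ just as tacitly, but you might want to state it explicitly. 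With those caveats noted, the argument is sound.
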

\begin{proof}
If $s \in \left(\frac{1}{2},1\right)$ this is a consequence of the fractional Strauss inequality \eqref{straussineq} (see \cite[Proposition 1]{choozawa}). Indeed, suppose that $x_\varepsilon \neq 0$ (otherwise there is nothing to prove). Then
\[
\begin{aligned}
(M_{s, \varepsilon}^{\beta_{s, \varepsilon}} |x_\varepsilon|)^{\frac{n-2s}{2}} &= M_{s, \varepsilon}^{1-\frac{\varepsilon}{2^*_s-2}} |x_\varepsilon|^{\frac{n-2s}{2}}\\
 &\leq M_{s, \varepsilon}|x_\varepsilon|^{\frac{n-2s}{2}} \leq C|u_{s, \varepsilon}(x_\varepsilon)||x_\varepsilon|^{\frac{n-2s}{2}} \leq C K_{n,s}\|u_{s, \varepsilon}\|_s^2 \leq C, 
\end{aligned}
\]
where, in view of Lemma \ref{As:energas}-$(v)$, we used that $M_{s, \varepsilon} \geq 1$. Unfortunately, as pointed out in Sect. 1, the fractional Strauss inequality does not hold in general when $s \in \left(0,\frac{1}{2}\right]$. To overcome this difficulty we use the following argument, which is valid for any $s\in (0,1)$. 

Assume by contradiction that there exists a subsequence (still denoted by $\ep$ for simplicity), such that $M_{s, \varepsilon}^{\beta_{s, \varepsilon}} |x_\varepsilon|  \to + \infty$ as $\ep\to 0^+$.
Let us define the rescaled functions
\begin{equation}\label{E:rescaling}
\tilde u_{s, \varepsilon}(x) = \frac{1}{M_{s, \varepsilon}}u_{s, \varepsilon}\left( \frac{x}{M_{s, \varepsilon}^{\beta_{s, \varepsilon}}}\right), \quad x \in \R^n.
\end{equation}
It is immediate to see that the functions $\tilde u_{s, \varepsilon}$ satisfy
\begin{equation}\label{approxpass1}
\begin{cases}
(-\Delta)^s \tilde u_{s, \varepsilon} = |\tilde u_{s, \varepsilon}|^{2^*_s-2-\varepsilon}\tilde u_{s, \varepsilon}, & \text{ in }B_{M_{s, \varepsilon}^{\beta_{s, \varepsilon}} R},\\
\tilde u_{s, \varepsilon} = 0 & \text{ in }\R^n \setminus B_{M_{s, \varepsilon}^{\beta_{s, \varepsilon}} R}.
\end{cases}
\end{equation}
Since by construction $|\tilde u_{s, \varepsilon}|_\infty \leq 1$ and $M_{s, \varepsilon}^{\beta_{s, \varepsilon}} |x_\varepsilon|  \to + \infty$, then from Proposition \ref{P:unifholderineq} we deduce that
\begin{equation}\label{uppbound}
\|\tilde u_{s, \varepsilon}\|_{C^{0,s}(\R^n)} \leq C,
\end{equation}
for some $C>0$ independent on $\ep$. 

Now we observe that, by definition of $x_\ep$, there exists $C_1 \in (0, 1]$ such that for all sufficiently small $\ep>0$ it holds
\begin{equation}\label{uppbound2}
|\tilde u_{s, \varepsilon}(M_{s, \varepsilon}^{\beta_{s, \varepsilon}} x_\varepsilon)| = \frac{|u_{s, \varepsilon}(x_\varepsilon)|}{M_{s, \varepsilon}}\geq C_1 + o(\varepsilon).
\end{equation}
Using \eqref{uppbound}, \eqref{uppbound2} and the triangle inequality, for every $\tau >0$ and $\xi \in \R^n$ such that $|\xi| \leq 1$, we infer that
\[
\frac{C_1- |\tilde u_\lambda(M_{s, \varepsilon}^{\beta_{s, \varepsilon}} x_\varepsilon+ \tau \xi)| + o(\varepsilon)}{|\tau\xi|^s} \leq  \frac{|\tilde u_{s, \varepsilon}(M_{s, \varepsilon}^{\beta_{s, \varepsilon}} x_\varepsilon)- \tilde u_{s, \varepsilon}(M_{s, \varepsilon}^{\beta_{s, \varepsilon}} x_\varepsilon+ \tau \xi)|}{|\tau\xi|^s} \leq C.
\] 
In particular, we can find $\varepsilon_0>0$, $\tau_0>0$ and $C_2>0$ such that, for every $\varepsilon \in (0, \varepsilon_0)$, it holds 
\[
0 < C_2 \leq C_1-C\tau_0^s + o(\varepsilon)  \leq |\tilde u_{s, \varepsilon}(M_{s, \varepsilon}^{\beta_{s, \varepsilon}} x_\varepsilon+ \tau_0 \xi)|, \quad \forall |\xi|\leq 1.
\]
Therefore, since $\tilde u_{s, \varepsilon}$ is radial and $\xi$ is arbitrary we obtain that
\[
0 < C_2 \leq |\tilde u_{s, \varepsilon}(x)|,  \quad \forall x \in B_{M_{s, \varepsilon}^{\beta_{s, \varepsilon}} |x_\varepsilon| + \tau_0}\setminus B_{M_{s, \varepsilon}^{\beta_{s, \varepsilon}} |x_\varepsilon| - \tau_0}.
\]
Now, since we are assuming by contradiction that $M_{s, \varepsilon}^{\beta_{s, \varepsilon}} |x_\varepsilon| \to +\infty$ and since $M_{s, \varepsilon} \to +\infty$ we get that
\[
\begin{aligned}
|u_{s, \varepsilon}|^{2^*_s-\varepsilon}_{2^*_s-\varepsilon} &\geq M_{s, \varepsilon}^{-\varepsilon\left(\frac{n-2s}{2s} \right)}|u_{s, \varepsilon}|^{2^*_s-\varepsilon}_{2^*_s-\varepsilon} =  |\tilde u_{s, \varepsilon}|_{2^*_s-\varepsilon}^{2^*_s-\varepsilon}\\
 &\geq C_2[(M_{s, \varepsilon}^{\beta_{s, \varepsilon}}|x_\varepsilon| + \tau_0)^n-(M_{s, \varepsilon}^{\beta_{s, \varepsilon}}|x_\varepsilon| - \tau_0)^n] \\
&= 2nC_2 \tau_0 (M_{s, \varepsilon}^{\beta_{s, \varepsilon}} |x_\varepsilon|)^{n-1} + \psi(\varepsilon), 
\end{aligned}
\]
where $\psi(\varepsilon)$ is such that $\frac{\psi(\varepsilon)}{(M_{s, \varepsilon}^{\beta_{s, \varepsilon}}|x_\varepsilon|)^{n-1}}\to 0$, as $\ep\to 0^+$. From this we get that $| u_{s, \varepsilon}|_{2^*_s-\varepsilon}^{2^*_s-\varepsilon}\to +\infty$, as $\varepsilon \to 0^+$, which contradicts Lemma \ref{As:energas}, (ii). The proof is complete. 
\end{proof}

In the next result we study the asymptotic behavior of the rescaled solutions defined in \eqref{E:rescaling}.

\begin{lemma}\label{L:asyntrescal}
Let $s \in (0,1)$, $n>2s$ and let $(\tilde u_{s, \varepsilon})_\ep$ be the sequence of rescaled functions associated to $(u_{s,\ep})_\ep$, defined in \eqref{E:rescaling}. Then, up to a subsequence, $\tilde u_{s, \varepsilon} \to \tilde u_s$ in $C^{0,\alpha}_{loc}(\R^n)$ for some $\alpha \in (0,s)$, as $\ep\to 0^+$, where $\tilde u_s \in \D^s(\R^n)$ is a nontrivial weak solution to
\begin{equation}\label{eq:criticalproblemrn}
\begin{cases}
(-\Delta)^s \tilde u_s = |\tilde u_s|^{2^*_s-2}\tilde u_s & \text{ in }\R^n, \\
\tilde u_s >0.
\end{cases}
\end{equation}
Moreover, $\tilde u_s$ is radial and $|\tilde u_s|_\infty = |\tilde u_s(0)|$. 
\end{lemma}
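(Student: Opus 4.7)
The plan is to pass to the limit in the rescaled equation along a subsequence, using a uniform H\"older bound for compactness, and then characterize the limit via the classification of positive solutions to the critical equation in $\R^n$.

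\emph{Step 1 (Uniform H\"older bound and convergence).} The rescaled function $\tilde u_{s,\varepsilon}$ vanishes outside $B_{M_{s,\varepsilon}^{\beta_{s,\varepsilon}}R}$ and solves \eqref{approxpass1}, whose right-hand side is bounded in absolute value by $1$ because $|\tilde u_{s,\varepsilon}|_\infty\leq 1$ by construction. Since $M_{s,\varepsilon}^{\beta_{s,\varepsilon}}R\to +\infty$ by Lemma \ref{As:energas}$(v)$, the rescaled domain eventually contains any fixed ball, so Proposition \ref{P:unifholderineq} yields the uniform estimate $\|\tilde u_{s,\varepsilon}\|_{C^{0,s}(\R^n)}\leq C$. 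By Ascoli--Arzel\`a and a diagonal extraction over an exhaustion of $\R^n$, together with interpolation, a subsequence satisfies $\tilde u_{s,\varepsilon}\to \tilde u_s$ in $C^{0,\alpha}_{loc}(\R^n)$ for every $\alpha\in(0,s)$. A direct change of variables shows
\[
\|\tilde u_{s,\varepsilon}\|_s^2 = M_{s,\varepsilon}^{-\frac{(n-2s)\varepsilon}{2s}}\|u_{s,\varepsilon}\|_s^2 \leq \|u_{s,\varepsilon}\|_s^2,
\]
which is uniformly bounded by Lemma \ref{As:energas}, so, along a further subsequence, $\tilde u_{s,\varepsilon}\rightharpoonup \tilde u_s$ weakly in $\D^s(\R^n)$; lower semicontinuity combined with Lemma \ref{As:energas}$(i)$--$(iii)$ gives $\tilde u_s\in \D^s(\R^n)$ with $\|\tilde u_s\|_s^2\leq 2S_s^{n/(2s)}$.

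\emph{Step 2 (Limit equation, nontriviality, radiality).} Taking $\varphi\in C_c^\infty(\R^n)$ and choosing $\varepsilon$ small enough that $\mathrm{supp}\,\varphi\subset B_{M_{s,\varepsilon}^{\beta_{s,\varepsilon}}R}$, I would pass to the limit in the weak formulation of \eqref{approxpass1}: weak convergence in $\D^s(\R^n)$ handles the bilinear form, and the uniform $L^\infty$ bound together with pointwise convergence and dominated convergence handles the nonlinearity. This produces $(-\Delta)^s \tilde u_s = |\tilde u_s|^{2^*_s-2}\tilde u_s$ weakly in $\R^n$. For nontriviality, pick $x_\varepsilon\in B_R$ with $|u_{s,\varepsilon}(x_\varepsilon)|=M_{s,\varepsilon}$; by Lemma \ref{L:maxpointbehavior} the points $y_\varepsilon:=M_{s,\varepsilon}^{\beta_{s,\varepsilon}}x_\varepsilon$ remain bounded, so, along a subsequence, $y_\varepsilon\to y_*\in\R^n$ and $|\tilde u_s(y_*)|=1$. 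Replacing $u_{s,\varepsilon}$ by $-u_{s,\varepsilon}$ if necessary, I may assume $\tilde u_s(y_*)=1$. Radiality of $\tilde u_s$ is inherited from that of $u_{s,\varepsilon}$ via the pointwise limit.

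\emph{Step 3 (Constant sign and maximum at the origin).} The remaining point is that $\tilde u_s>0$ and $|\tilde u_s|_\infty=\tilde u_s(0)$. The key observation is the nonlocal Dirichlet energy identity: since $(u^+,u^-)_s\leq 0$ for every $u\in \D^s(\R^n)$, with strict inequality unless $u^+\equiv 0$ or $u^-\equiv 0$, any sign-changing solution $w\in \D^s(\R^n)$ of the critical equation would satisfy
\[
\|w\|_s^2 = \|w^+\|_s^2+\|w^-\|_s^2 - 2(w^+,w^-)_s > \|w^+\|_s^2+\|w^-\|_s^2 \geq 2S_s^{n/(2s)},
\]
where the last inequality follows by testing against $w^\pm$, using $(w^+,w^-)_s\leq 0$, and invoking the fractional Sobolev inequality. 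This would contradict the energy upper bound on $\tilde u_s$, so $\tilde u_s$ does not change sign; combined with $\tilde u_s(y_*)=1$ and the fractional strong maximum principle, $\tilde u_s>0$ in $\R^n$. Finally, the Chen--Li--Ou type classification identifies every positive solution of the critical equation on $\R^n$ with a standard bubble $U_{x_0,\mu}$; radial symmetry about the origin forces $x_0=0$, so $|\tilde u_s|_\infty=\tilde u_s(0)$. The main obstacle is ruling out sign-change: since the energy upper bound $2S_s^{n/(2s)}$ is sharp, the argument genuinely hinges on the strict nonlocal inequality $(u^+,u^-)_s<0$ for sign-changing functions, a phenomenon absent in the local setting.
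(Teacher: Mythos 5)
Your proposal is correct and follows essentially the same route as the paper: a uniform H\"older bound from Proposition~\ref{P:unifholderineq} to extract a $C^{0,\alpha}_{loc}$ limit, a uniform $\D^s(\R^n)$ bound to pass to the weak limit equation, nontriviality via Lemma~\ref{L:maxpointbehavior}, and the strict nonlocal interaction inequality $(w^+,w^-)_s<0$ combined with the energy bound $\|\tilde u_s\|^2_s\le 2S_s^{n/(2s)}$ to exclude a sign change. The one substantive deviation is at the very end: where the paper derives $|\tilde u_s|_\infty=\tilde u_s(0)$ from the fractional moving-plane method plus the strong maximum principle (giving radial decrease), you invoke the Chen--Li--Ou-type classification of positive $\D^s$-solutions of the critical equation and read the statement off the bubble form. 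Both are legitimate; your route is slightly over-powered, since it already identifies $\tilde u_s$ as a standard bubble, which is a conclusion the paper defers to Lemma~\ref{positivebubble} and establishes there by a separate Fatou/energy argument rather than by citing a classification theorem.
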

\begin{proof}
As seen in the proof of Lemma \ref{L:maxpointbehavior}, the functions $\tilde u_{s, \varepsilon}$ weakly satisfy \eqref{approxpass1} and by construction it holds that $|\tilde u_{s, \varepsilon}|_\infty \leq 1$. Then, since $M_{s, \varepsilon}^{\beta_{s, \varepsilon}} R \to + \infty$, thanks to Proposition \ref{P:unifholderineq} and a standard argument, up to a subsequence, we have
\[
\tilde u_{s, \varepsilon} \to \tilde u_s \text{ in } C^{0,\alpha}_{loc}(\R^n),
\]
for some $\tilde u_s \in C^{0,\alpha}_{loc}(\R^n)$, $\alpha \in (0,s)$.
We point out that $\tilde u_s \not \equiv 0$. Indeed, let $x_\varepsilon \in B_R$ be such that $|u_{s, \varepsilon}(x_\varepsilon)| = M_{s, \varepsilon}$. By construction we have $|\tilde u_{s, \varepsilon}(M_{s, \varepsilon}^{\beta_{s, \varepsilon}} x_\varepsilon)|=1$, and thanks to Lemma \ref{L:maxpointbehavior} we infer that the point $M_{s, \varepsilon}^{\beta_{s, \varepsilon}} x_\varepsilon$ stays in a compact subset of $\R^n$. Therefore, from the $C^{0, \alpha}_{loc}$-convergence of $\tilde u_{s, \varepsilon}$ in $\R^n$, we get that $\tilde u_s$ is non trivial.

Now we show that $\tilde u_s \in \D^s(\R^n)$. In fact, by Lemma \ref{As:energas}-(i) and since $M_{s, \varepsilon} \to +\infty$, we infer that
\[
\|\tilde u_{s, \varepsilon}\|^2_s = M_{s, \varepsilon}^{-\varepsilon\left(\frac{n-2s}{2s}\right)}\|u_{s, \varepsilon}\|^2_s \leq \|u_{s, \varepsilon}\|^2_s \to 2S_s^{\frac{n}{2s}}, \ \ \text{as} \ \varepsilon \to 0^+,
\]
and in particular, up to a subsequence, $\tilde u_{s, \varepsilon} \rightharpoonup v$ for some $v \in \D^s(\R^n)$. Then, since $\tilde u_{s, \varepsilon} \to \tilde u_s$ in $C^{0, \alpha}_{loc}(\R^n)$, we get that $v=\tilde u_s$ and we are done. In addition, applying Fatou's Lemma we also deduce that
\begin{equation}\label{nosc}
\|\tilde u_s\|_s^2 \leq \liminf_{\ep \to 0^+}\|\tilde u_{s, \varepsilon}\|^2_s \leq 2S_s^{\frac{n}{2s}}.
\end{equation}

Let us prove now that $\tilde u_s$ is a weak solution to \eqref{eq:criticalproblemrn}. Indeed, for every $\varphi \in C^\infty_c(\R^n)$, since $\tilde u_{s,\ep}$ is a weak solution to \eqref{approxpass1} we have
\begin{equation}\label{eq:weaklimriscal}
(\tilde u_{s, \varepsilon}, \varphi)_s =  \int_{B_{M_{s, \varepsilon}^{\beta_{s, \varepsilon}}R}}|\tilde u_{s, \varepsilon}|^{2^*_s-2-\varepsilon}\tilde u_{s, \varepsilon} \varphi \de x,
\end{equation}
where $\varepsilon$ is small enough  so that $\text{supp }\varphi \subset B_{M_{s, \varepsilon}^{\beta_{s, \varepsilon}} R}$.
Since $\tilde u_{s, \varepsilon} \to \tilde u_s$ for a.e. $x \in \R^n$, using the well known relations (see e.g. \cite{CorIac})
\begin{equation}\label{E:fracintbypart}
(u, \varphi)_s = \int_{\R^n}u (-\Delta)^s \varphi \de x, \quad \forall u \in \D^s(\R^n), \ \forall \varphi \in C^\infty_c(\R^n),
\end{equation}
and
\[
|(-\Delta)^s \varphi (x)| \leq C(\varphi)\frac{1}{(1+|x|)^{n+2s}},\quad \forall x \in \R^n,\ \forall \varphi \in C^\infty_c(\R^n), 
\]
and thanks to Lebesgue's dominated convergence theorem, passing to the limit as $\ep\to 0^+$ in \eqref{eq:weaklimriscal} we infer that
\[
\int_{\R^n}\tilde u_s (-\Delta)^s \varphi \de x = \int_{\R^n}|\tilde u_s|^{2^*_s-2}\tilde u_s \varphi \de x.
\]
Now, since $\tilde u_s \in \D^s(\R^n)$ we are allowed to use again \eqref{E:fracintbypart}, obtaining that $\tilde u_s$ weakly satisfies
\begin{equation}\label{weaksobsol}
(-\Delta)^s \tilde u_s = |\tilde u_s|^{2^*_s-2}\tilde u_s \quad \text{ in }\R^n.
\end{equation}
We prove now that $\tilde u_s$ is of constant sign. To this end, assume by contradiction that $\tilde u_s$ is sign-changing. Then, using $\tilde u_s^\pm \in \D^s(\R^n)$ as test functions in \eqref{weaksobsol} and recalling that $(\tilde u_s^+, \tilde u_s^-)_s < 0$, we get that
\[
\|\tilde u_s^\pm\|^2_s = (\tilde u_s^+, \tilde u_s^-)_s + |\tilde u_s^\pm|^{2^*_s}_{2^*_s} <|\tilde u_s^\pm|^{2^*_s}_{2^*_s}.
\]
Hence, by the Sobolev inequality we infer that
\[
S_s \leq \frac{\|\tilde u_s^\pm\|^2_s}{|\tilde u_s^\pm|^{2}_{2^*_s}}< |\tilde u_s^\pm|^{2^*_s -2}_{2^*_s},  
\]
and thus $2S_s^{\frac{n}{2s}} < |\tilde u_s^+|^{2^*_s}_{2^*_s}+ |\tilde u_s^-|^{2^*_s}_{2^*_s} = |\tilde u_s|^{2^*_s}_{2^*_s}$. 
Finally, using $\tilde u_s$ as a test function in \eqref{weaksobsol} we have $\|\tilde u_s\|_s^2 = |\tilde u_s|_{2^*_s}^{2^*_s}$, and we obtain that $2S_s^{\frac{n}{2s}}< \|\tilde u_s\|_s^2$, which contradicts \eqref{nosc}.

At the end we notice that, since $\tilde u_s$ is a pointwise limit of radial functions, it is radial too. Moreover, since $\tilde u_s$ is of constant sign, assuming without loss of generality that $\tilde u_s \geq 0$, we easily deduce, by the fractional strong maximum principle and the fractional moving plane method (see \cite{ChenLiLi}), that $\tilde u_s$ is also decreasing along the radii and thus $\tilde u_s$ achieves its maximum at the origin. The proof is complete. 
\end{proof}

A consequence of the above result is that least energy radial sign-changing solutions to \eqref{fracradsubcrit} cannot vanish at the origin.

\begin{lemma}\label{L:nozeroorigin}
Let $s\in (0,1)$ and $n>2s$. There exist $\overline \varepsilon >0$ and $C > 0 $ such that for every $\varepsilon \in (0, \overline  \varepsilon)$ it holds
\begin{equation}\label{eq:estimateatzero}
 |u_{s, \varepsilon}(0)|\geq C.
 \end{equation}
Moreover, for every $0<s_0<s_1 \leq 1$, $n>2s_1$ estimate \eqref{eq:estimateatzero} holds with $\overline \ep>0$, $C>0$ independent on $s \in [s_0,s_1)$. 
\end{lemma}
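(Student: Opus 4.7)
The plan is to argue by contradiction, using the compactness encoded in Lemma \ref{L:asyntrescal} together with the fact that the limiting rescaled profile is an Aubin--Talenti bubble which attains its $L^\infty$-norm precisely at the origin. For fixed $s$ the key observation is that
\[
|u_{s,\varepsilon}(0)| = M_{s,\varepsilon}\,|\tilde u_{s,\varepsilon}(0)|,
\]
where $\tilde u_{s,\varepsilon}$ is the rescaling introduced in \eqref{E:rescaling}, and that $|\tilde u_{s,\varepsilon}(0)|$ is bounded below by a positive constant for small $\varepsilon$. Combined with $M_{s,\varepsilon}\to+\infty$ (Lemma \ref{As:energas}(v)), this yields not merely the lower bound $C$ but in fact blow-up of $|u_{s,\varepsilon}(0)|$.

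\medskip

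\emph{Step 1.} By Lemma \ref{L:asyntrescal}, up to a subsequence, $\tilde u_{s,\varepsilon}\to\tilde u_s$ in $C^{0,\alpha}_{\mathrm{loc}}(\R^n)$, where $\tilde u_s$ is a non-trivial radial constant-sign solution of \eqref{eq:criticalproblemrn} with $|\tilde u_s|_\infty = |\tilde u_s(0)|$.

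\emph{Step 2.} Show $|\tilde u_s(0)|=1$. The bound $|\tilde u_s|_\infty\le 1$ follows from locally uniform convergence and $|\tilde u_{s,\varepsilon}|_\infty\le 1$. For the matching lower bound, let $x_\varepsilon\in B_R$ satisfy $|u_{s,\varepsilon}(x_\varepsilon)|=M_{s,\varepsilon}$ and set $\tilde x_\varepsilon:= M_{s,\varepsilon}^{\beta_{s,\varepsilon}}x_\varepsilon$, so that $|\tilde u_{s,\varepsilon}(\tilde x_\varepsilon)|=1$. By Lemma \ref{L:maxpointbehavior}, $(\tilde x_\varepsilon)$ is bounded, so up to a further subsequence $\tilde x_\varepsilon\to z_0\in\R^n$, and locally uniform convergence gives $|\tilde u_s(z_0)|=1$. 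Hence $|\tilde u_s|_\infty\ge 1$, and combined with the radial-decreasing character of $\tilde u_s$, this forces $|\tilde u_s(0)|=1$.

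\emph{Step 3.} From $C^{0,\alpha}_{\mathrm{loc}}$ convergence at $x=0$ we obtain $|\tilde u_{s,\varepsilon}(0)|\to 1$, whence $|\tilde u_{s,\varepsilon}(0)|\geq\frac12$ for all sufficiently small $\varepsilon$. Using once more $M_{s,\varepsilon}\to+\infty$,
\[
|u_{s,\varepsilon}(0)| \;=\; M_{s,\varepsilon}\,|\tilde u_{s,\varepsilon}(0)| \;\ge\; \frac{M_{s,\varepsilon}}{2} \;\ge\; \frac12
\]
for $\varepsilon\in(0,\overline\varepsilon)$, yielding \eqref{eq:estimateatzero} with, say, $C=\frac12$.

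\medskip

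For the uniform statement on $s\in[s_0,s_1)$, I would argue by contradiction with a diagonal sequence $(s_k,\varepsilon_k)$, $\varepsilon_k\to 0^+$, $s_k\to s^*\in[s_0,s_1]$. The uniform Hölder estimate of Proposition \ref{P:unifholderineq} together with the $s$-uniform energy bounds in Lemma \ref{As:energas} make $(\tilde u_{s_k,\varepsilon_k})$ pre-compact in $C^{0,\alpha'}_{\mathrm{loc}}$ for some $\alpha'\in(0,s_0)$, and the analogues of Steps 1--3 then deliver the contradiction. I expect the main obstacle to be precisely this passage: proving that the extracted subsequential limit is a non-trivial (and thus, by the usual classification, Aubin--Talenti) radial solution of the critical problem with exponent $s^*$, since one must pass to the limit in $(-\Delta)^{s_k}$ simultaneously with $s_k\to s^*$. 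This is handled by the continuity of the fractional Laplacian in $s$ applied to a fixed test function $\varphi\in C^\infty_c(\R^n)$ together with Lebesgue dominated convergence, in the same spirit as the proof of Lemma \ref{L:asyntrescal}; non-triviality of the limit is secured by Step 2 above, which localizes a point of height $1$ in a fixed compact set.
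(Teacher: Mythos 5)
Your argument for fixed $s$ is essentially correct and in fact proves the stronger assertion $|u_{s,\varepsilon}(0)|\to+\infty$; a minor tidy-up is needed, since Lemma~\ref{L:asyntrescal} and Lemma~\ref{L:maxpointbehavior} are stated only up to subsequences, so the step ``$|\tilde u_{s,\varepsilon}(0)|\to 1$'' requires an explicit sub-subsequence argument. (You could also shorten Step~2: $|\tilde u_s(0)|=|\tilde u_s|_\infty>0$ is already part of Lemma~\ref{L:asyntrescal}, and any positive lower bound suffices; pinning it to the value $1$ is not needed.)

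The genuine gap is in the uniformity in $s$, and it is not the one you flagged. Along a diagonal sequence $(s_k,\varepsilon_k)$ with $s_k\to\sigma$ and $\varepsilon_k\to 0^+$, you may not simply invoke an analogue of Lemma~\ref{L:asyntrescal}: its proof relies on $M_{s,\varepsilon}^{\beta_{s,\varepsilon}}R\to+\infty$, and Lemma~\ref{As:energas}(v) guarantees $M_{s,\varepsilon}\to+\infty$ only for \emph{each fixed} $s$ (the uniformity clause there extends only to items $(i)$--$(iii)$). Thus $M_k:=M_{s_k,\varepsilon_k}$ may stay bounded, in which case the rescaled functions live in balls of bounded radius and do not converge to an entire bubble. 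The paper treats this contingency as a separate case: if $(M_k)$ is bounded (and bounded away from zero by energy), then $u_{s_k,\varepsilon_k}$ converges without rescaling in $C^{0,\alpha}(\overline{B_R})$ to a non-trivial weak solution $u$ of the critical problem in $B_R$ with exponent $\sigma$; for $\sigma=1$ this contradicts the Pohozaev identity outright, and for $\sigma<1$ one must in addition show $u$ has constant sign, using the energy bound inherited from Lemma~\ref{L:nodalenerasynt} together with the fact that any sign-changing $\D^\sigma$-solution of the critical problem must lie strictly above the nodal energy level, after which the fractional Pohozaev identity applies. Only once this case is excluded may one assume $M_k\to+\infty$ and run the rescaling argument you outline. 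The passage $(-\Delta)^{s_k}\to(-\Delta)^{\sigma}$ you singled out as the main obstacle is by contrast routine (integration by parts against a fixed test function plus dominated convergence, as in Lemma~\ref{L:asyntrescal}); the only delicate point there is $\sigma=1$, where Fatou does not directly give $\tilde u\in\D^1(\R^n)$ and the paper resorts to a classification result of Farina.
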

\begin{proof}
We prove directly the second part of the Lemma.  Assume by contradiction that there exist $0<s_0<s_1 \leq 1$, three sequences $\varepsilon_k \to 0^+$, $C_k \to 0^+$, $(s_k)_k \in [s_0, s_1)$, and a sequence of nodal radial least energy solutions $u_k:= u_{s_k, \varepsilon_k}$  such that $|u_k(0)| <C_k$. Up to a subsequence, we can always assume that $s_k \to \sigma$, with $\sigma \in [s_0, s_1]$. 

Now, only two possibilities can occur: setting $M_k := |u_k|_\infty$, either $(M_k)_k$ is a bounded sequence or there exists a subsequence such that $M_k \to + \infty$. 

Assume that $(M_k)_k$ is bounded. We first observe that $(M_k)_k$ is bounded away from zero, otherwise we could find a subsequence such that $M_k \to 0$, but this would contradict Lemma \ref{As:energas}. Therefore, up to a subsequence we can assume that $M_k \to l$, for some real number $l>0$. Adapting the arguments of Lemma \ref{L:asyntrescal} and using Lemma \ref{L:nodalenerasynt} we readily infer that, up to a subsequence, $u_k \rightharpoonup u$ in $X^{s_0}_0(B_R)$ and $u_k \to u$ in $C^{0, \alpha}(\R^n)$, for some $\alpha \in (0,s_0)$. Furthermore we have $u \not \equiv 0$ and it holds that
\[
\int_{\R^n}u (-\Delta)^\sigma \varphi \de x= \int_{\R^n}|u|^{2^*_\sigma -2}u\varphi \de x \quad \forall \varphi \in C^\infty_c(B_R). 
\]
Using that $u_k \to u$ in $L^2(B_R)$, thanks to the fractional Sobolev embedding and Fatou's Lemma we find
\begin{equation}\label{fatoufourier}
\|u\|^2_\sigma = \int_{\R^n}|\xi|^{2\sigma}|\hat u(\xi)|^2\de x \leq \liminf_{k \to + \infty}\int_{\R^n}|\xi|^{2s_k}|\hat u_k(\xi)|^2\de x = \liminf_{k \to + \infty} \|u\|_s^2 \leq \frac{2\sigma}{n}S_\sigma^{\frac{n}{2\sigma}},
\end{equation}
where the last inequality is a consequence of the second part of Lemma \ref{L:nodalenerasynt}, while the equalities are due to the interpretation via the Fourier transform of the fractional Laplacian (see e.g. \cite{Hitch}).
From this discussion it follows that $u$ is a non trivial weak solution of 
\begin{equation}\label{critprob}
\begin{cases}
(-\Delta)^\sigma u = |u|^{2^*_\sigma -2}u & \text{in }B_R, \\
u = 0 & 	\text{in }\R^n \setminus B_R.
\end{cases}
\end{equation}
This readily contradicts the Pohozaev identity when $\sigma = 1$. If $\sigma <1$, the fractional Pohozaev identity only implies the nonexistence of constant-sign solutions to \eqref{critprob} (see \cite{RosSerrI}).  In order to obtain a contradiction we show that $u$ is of constant sign. Indeed, arguing as in the proof of Lemma \ref{L:asyntrescal}, we have that any sign-changing solution $u$ to \eqref{critprob} must satisfy $\|u\|^2_\sigma >\frac{2\sigma}{n}S_\sigma^{\frac{n}{2\sigma}}$. Hence, thanks to \eqref{fatoufourier} it follows that $u$ is of constant sign and we get the desired contradiction.
\medskip

Let us analyze the second case. Assume that $M_k \to + \infty$ and consider the rescaled functions
\[
\tilde u_k(x) = \frac{1}{M_k}u_k\left( \frac{x}{M_{k}^{\beta_{k}}}\right), \quad x \in \R^n,
\]
where $\beta_k = \frac{2}{n-2s_k} - \frac{\varepsilon_k}{2s_k}$. 
Arguing as in Lemma \ref{L:asyntrescal}, and taking into account Lemma \ref{L:nodalenerasynt}, we obtain that $\tilde u_{k} \to \tilde u$ in $C^{0,\alpha}_{loc}(\R^n)$, as $k \to +\infty$, for some $\alpha \in (0,s_0)$, where the function $\tilde u$ belongs to $\D^\sigma(\R^n)\cap C^{0,\alpha}_{loc}(\R^n) \setminus\{0\}$, it is radial, verifies $|\tilde u|_\infty = |\tilde u(0)|$ and weakly satisfies 
\[
\begin{cases}
(-\Delta)^\sigma \tilde u = |\tilde u|^{2^*_\sigma-2}\tilde u & \text{ in }\R^n, \\
\tilde u >0.
\end{cases}
\]
The only delicate point is when $\sigma =1$. Indeed, in this case we cannot simply argue via Fatou's Lemma to show that $\tilde u \in \D^1(\R^n)$. Nevertheless, since 
\[
|\tilde u|_{2^*_1}^{2^*_1} \leq \liminf_{k \to + \infty} |\tilde u_k|^{2^*_{s_k}}_{2^*_{s_k}}\leq \liminf_{k \to + \infty}S_{s_k}^{-\frac{2^*_{s_k}}{2}}\|\tilde u_k\|_{s_k}^{\frac{2^*_{s_k}}{2}}\leq C, 
\]
we have that $\tilde u \in L^{2^*_1}(\R^n)$. Therefore we can apply \cite[Theorem 2, Corollary 3]{farina}, obtaining that $\tilde u \in \D^1(\R^n)$ and $\|\tilde u\|^2_1 = |\tilde u|^{2^*_1}_{2^*_1}$.

To conclude notice that, since we are assuming $|\tilde u_k(0)|= \frac{1}{M_k}|u_k(0)|\to 0$ and since we have $\tilde u_k \to \tilde u$ in $C^{0, \alpha}_{loc}(\R^n)$, then it follows that $|\tilde u(0)|=0$, which contradicts the non triviality of $\tilde u$. The proof is then complete.
\end{proof}

In the next lemma we show, independently on the number of sign-changes, that $M_{s, \varepsilon}$ is achieved in the  nodal component containing the origin and  blows up faster than every other extremal value achieved in the other components. Before stating the result we introduce some notation. Assuming without loss of generality that $u_{s, \varepsilon}(0) > 0$, thanks to Lemma \ref{L:nozeroorigin}, for all sufficiently small $\ep>0$ the following quantities are well defined:
\[
\begin{aligned}
&r^1_\varepsilon: = \min\{r \in (0,R] \ ; \ u_{s, \varepsilon}(x) = 0, |x|= r\},\\
&M^+_{s, \varepsilon}: = \max \{u_{s, \varepsilon}(x) \ ; \ 0 \leq |x| \leq r^1_\varepsilon\}, \\
&\hat M_{s, \varepsilon}: = \max \{|u_{s, \varepsilon}(x)| \ ; \ r^1_\varepsilon \leq |x| \leq R\}.
\end{aligned}
\] 
In other words, $r^1_\varepsilon$ is the first nodal radius, $M_{s, \varepsilon}^+$ is the maximum of the solution in the first nodal component, while $\hat M_{s, \varepsilon}$ is the absolute maximum achieved in the other nodal components. 

\begin{lemma}\label{L:maxasynt}
Let $s\in (0,1)$ and $n>2s$. There exists $\varepsilon'>0$ such that for every $\varepsilon \in (0, \varepsilon')$ it holds
\[
M_{s, \varepsilon} = M_{s, \varepsilon}^+.
\]
Moreover, up to a subsequence, as $\varepsilon \to 0^+$
\[
(i) \ \ (M_{s, \varepsilon}^+)^{\beta_{s, \varepsilon}} r_\varepsilon^1 \to +\infty,\quad (ii) \ \ \frac{M_{s, \varepsilon}^+}{\hat M_{s, \varepsilon}}\to + \infty.  
\]
\end{lemma}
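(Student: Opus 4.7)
My plan is to use the rescaling analysis from Lemma \ref{L:asyntrescal} together with Lemma \ref{L:maxpointbehavior} to identify the limit profile of the rescaled solutions as a strictly positive standard bubble, and then deduce all three claims from this identification. Fix any $x_\varepsilon \in B_R$ with $|u_{s,\varepsilon}(x_\varepsilon)| = M_{s,\varepsilon}$. Lemma \ref{L:maxpointbehavior} keeps $\tilde y_\varepsilon := M_{s,\varepsilon}^{\beta_{s,\varepsilon}} x_\varepsilon$ bounded, so up to a subsequence $\tilde y_\varepsilon \to y_0 \in \R^n$, while Lemma \ref{L:asyntrescal} provides $\tilde u_{s,\varepsilon} \to \tilde u_s$ in $C^{0,\alpha}_{loc}(\R^n)$, with $\tilde u_s$ nontrivial, radial, of constant sign, and satisfying $|\tilde u_s|_\infty = |\tilde u_s(0)|$. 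To pin down the sign, I use the standing convention $u_{s,\varepsilon}(0) > 0$ -- valid uniformly for all small $\varepsilon$ thanks to Lemma \ref{L:nozeroorigin} -- which gives $\tilde u_{s,\varepsilon}(0) = u_{s,\varepsilon}(0)/M_{s,\varepsilon} > 0$, hence $\tilde u_s(0) \geq 0$; combined with the nontriviality of $\tilde u_s$ and $|\tilde u_s|_\infty = |\tilde u_s(0)|$, this forces $\tilde u_s(0) > 0$, so $\tilde u_s$ agrees with a strictly positive standard bubble on $\R^n$ and $\min_{\overline{B_{R'}}} \tilde u_s > 0$ for every $R' > 0$.

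Part (i) is then immediate: the $C^{0,\alpha}_{loc}$-convergence forces $\tilde u_{s,\varepsilon} > 0$ on $\overline{B_{R'}}$ for all small $\varepsilon$, so $u_{s,\varepsilon}$ does not vanish on $B_{R'/M_{s,\varepsilon}^{\beta_{s,\varepsilon}}}$, which yields $M_{s,\varepsilon}^{\beta_{s,\varepsilon}} r_\varepsilon^1 \geq R'$; letting $R' \to +\infty$ proves (i). For the equality $M_{s,\varepsilon} = M_{s,\varepsilon}^+$, I observe that since $\tilde y_\varepsilon \to y_0$ is bounded while $M_{s,\varepsilon}^{\beta_{s,\varepsilon}} r_\varepsilon^1 \to +\infty$, eventually $|x_\varepsilon| < r_\varepsilon^1$, so $x_\varepsilon$ lies inside the first positive nodal region, giving $M_{s,\varepsilon} = u_{s,\varepsilon}(x_\varepsilon) \leq M_{s,\varepsilon}^+$, while the reverse inequality is trivial. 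A standard subsequence-of-every-subsequence argument applied to the previous steps promotes the identity to hold for all sufficiently small $\varepsilon$.

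Finally, (ii) follows by contradiction: if $M_{s,\varepsilon}^+ / \hat M_{s,\varepsilon}$ stayed bounded along a subsequence, then by the preceding identity one would have $\hat M_{s,\varepsilon} \geq c\, M_{s,\varepsilon}$ for some $c > 0$, so any point $y_\varepsilon$ with $r_\varepsilon^1 \leq |y_\varepsilon| \leq R$ realizing $|u_{s,\varepsilon}(y_\varepsilon)| = \hat M_{s,\varepsilon}$ would satisfy $|u_{s,\varepsilon}(y_\varepsilon)| = O(M_{s,\varepsilon})$; Lemma \ref{L:maxpointbehavior} would then keep $M_{s,\varepsilon}^{\beta_{s,\varepsilon}} |y_\varepsilon|$ bounded, whereas $M_{s,\varepsilon}^{\beta_{s,\varepsilon}} |y_\varepsilon| \geq M_{s,\varepsilon}^{\beta_{s,\varepsilon}} r_\varepsilon^1 \to +\infty$ by (i), a contradiction. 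The most delicate point of the whole argument is the sign identification of $\tilde u_s$: without it one could not exclude that the absolute maximum $M_{s,\varepsilon}$ is driven by a blow-up of the negative part in an outer nodal annulus, and the geometric picture provided by (i)--(ii) would collapse.
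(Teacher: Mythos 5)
Your proof is correct and relies on the same key tools as the paper's: the strict positivity and maximality at the origin of the rescaled limit $\tilde u_s$ from Lemma \ref{L:asyntrescal}, the uniform H\"older estimate, and Lemma \ref{L:maxpointbehavior}. The organization is slightly different but equivalent in substance: you prove $M_{s,\varepsilon}^{\beta_{s,\varepsilon}}\, r_\varepsilon^1 \to +\infty$ directly from $\min_{\overline{B_{R'}}} \tilde u_s > 0$ and local uniform convergence, and then deduce $M_{s,\varepsilon}=M_{s,\varepsilon}^+$ from the boundedness of $M_{s,\varepsilon}^{\beta_{s,\varepsilon}} x_\varepsilon$; the paper instead assumes $M_{s,\varepsilon}=\hat M_{s,\varepsilon}$ and derives a contradiction because the first node would then be trapped in a compact set where $\tilde u_s>0$, and treats $(i)$ by the identical contradiction. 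For $(ii)$ you cite Lemma \ref{L:maxpointbehavior} directly (applied to the extremal point of the outer components, which has magnitude comparable to $M_{s,\varepsilon}$ under the contradiction hypothesis), whereas the paper re-runs the annulus estimate inside the proof; this is a small but genuine streamlining. You also make explicit the sign identification of $\tilde u_s$ from the convention $u_{s,\varepsilon}(0)>0$, reconciling it with the ``without loss of generality'' normalization in Lemma \ref{L:asyntrescal} — a point the paper glosses over but which is needed for the argument to be watertight. One minor overreach: calling $\tilde u_s$ a ``standard bubble'' at this stage is circular, since that is only established in Lemma \ref{positivebubble}, which depends on the present result; but you only actually use $\tilde u_s>0$ on $\R^n$ (hence bounded away from zero on compacts), which does follow from Lemma \ref{L:asyntrescal} alone via constant sign, nontriviality, and the strong maximum principle.
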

\begin{proof}
We begin by proving that $M_{s, \varepsilon} = M_{s, \varepsilon}^+$. 
Suppose by contradiction that there exists a sequence $\varepsilon \to 0^+$ such that $M_{s, \varepsilon} = \hat M_{s, \varepsilon}$. By Lemma \ref{L:maxpointbehavior} we get that $ M_{s, \varepsilon}^{\beta_{s, \varepsilon}} |x_\varepsilon| \not \to \infty$, where $x_\varepsilon$ is any point such that $|u_{s, \varepsilon}(x_\varepsilon) | = M_{s, \varepsilon}$. Notice that by construction we have $|x_\varepsilon| \geq r^1_\varepsilon$ and thus $M_{s, \varepsilon}^{\beta_{s, \varepsilon}} r^1_\varepsilon \not \to + \infty$ too.

As a consequence, up to a subsequence, $M_{s, \varepsilon}^{\beta_{s, \varepsilon}} r^1_\varepsilon\to l$ for some real number $l\geq 0$. Let $\tilde u_{s, \varepsilon}$ be the rescaling defined in \eqref{E:rescaling}. Then, by Lemma \ref{L:asyntrescal} we infer that $\tilde u_{s, \varepsilon}\to \tilde u_s$ in $C^{0,\alpha}_{loc}(\R^n)$ for some $\alpha \in (0,s)$, $ \tilde u_s \in C^{0,\alpha}_{loc}(\R^n)$. On the other hand, let $(y_\varepsilon) \subset \R^n$ be such that $|y_\varepsilon| = r^1_\varepsilon$. Up to a further subsequence, $M_{s, \varepsilon}^{\beta_{s, \varepsilon}} y_\varepsilon \to \hat y$ as $\ep \to 0^+$, where $|\hat y | = l$. 
Then, thanks to Proposition \ref{P:unifholderineq} and since $\tilde u_{s, \varepsilon}(M_{s, \varepsilon}^{\beta_{s, \varepsilon}} y_\varepsilon)=0$, $\tilde u_{s, \varepsilon} \to \tilde u_s$ a.e., we get that
\[
|\tilde u_s (\hat y)| \leq |\tilde u_{s, \varepsilon}(\hat y) - \tilde u_{s, \varepsilon}(M_{s, \varepsilon}^{\beta_{s, \varepsilon}} y_\varepsilon)| + |\tilde u_{s, \varepsilon}(\hat y) - \tilde u_s(\hat y)| \leq  C|\hat y - M_{s, \varepsilon}^{\beta_{s, \varepsilon}} y_\varepsilon|^\alpha + o(1) = o(1).
\] 
From this we deduce that $\tilde u(\hat y) = 0$, which contradicts the strict positivity of $\tilde u$ (see Lemma \ref{L:asyntrescal}). 

The proof of (i)  is identical and we omit it. Let us prove (ii). Let $x_\varepsilon \in \R^n$ be such that $|u_{s, \varepsilon}(x_\varepsilon)|= \hat M_{s, \varepsilon}$. We claim that  $\tilde u_{s, \varepsilon}(M_{s, \varepsilon}^{\beta_{s, \varepsilon}} x_\varepsilon)\to 0$.

 Indeed, if it is not the case, up to a subsequence, we find $c \in (0,1]$ such that  $\tilde u_{s, \varepsilon}(M_{s, \varepsilon}^{\beta_{s, \varepsilon}} x_\varepsilon)\to c$. Thanks to Proposition \ref{P:unifholderineq} and arguing  as in the proof of Lemma \ref{L:maxpointbehavior} we obtain that there exists a positive constant $C_1$ and a small positive number $\tau_0$, both independent on $\ep$, such that for all sufficiently small $\ep>0$
\[
|\tilde u_{s, \varepsilon}| \geq C_1 >0,\ \  \forall x \in B_{M_{s, \varepsilon}^{\beta_{s, \varepsilon}} |x_\varepsilon|+ \tau_0}\setminus B_{M_{s, \varepsilon}^{\beta_{s, \varepsilon}} |x_\varepsilon|-\tau_0}.
\]
Since $M_{s, \varepsilon}^{\beta_{s, \varepsilon}}|x_\varepsilon|>M_{s, \varepsilon}^{\beta_{s, \varepsilon}} r_\varepsilon^1 \to +\infty$, because of (i), we obtain that $|\tilde u_{s, \varepsilon}|^{2^*_s-\varepsilon}_{2^*_s-\varepsilon} \to + \infty$, which contradicts Lemma \ref{As:energas}-(ii). The claim is thus proved.

Now, in order to conclude the proof of (ii), we notice that
\[
1 = \frac{ |u_{s, \varepsilon}(x_\varepsilon)|}{\hat M_{s, \varepsilon}} = \frac{|\tilde u_{s, \varepsilon}(M_{s, \varepsilon}^{\beta_{s, \varepsilon}}x_\varepsilon)|M_{s, \varepsilon}}{\hat M_{s, \varepsilon}},
\]
and thanks to the previous claim we obtain the desired result.

\end{proof}

An immediate consequence of the previous result is the following
\begin{lemma}\label{positivebubble}
Let $s\in (0,1)$ and $n>2s$. The function $\tilde u_s$ given by Lemma \ref{L:asyntrescal} is a standard bubble centered at the origin, i.e. $\tilde u_s$ is of the form \eqref{fracbubble} with $x_0=0$.
\end{lemma}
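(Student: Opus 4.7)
The plan is to combine the properties of $\tilde u_s$ collected in Lemma \ref{L:asyntrescal} with the classification of positive entire solutions to the critical fractional equation on $\R^n$. From Lemma \ref{L:asyntrescal}, $\tilde u_s$ is a nontrivial element of $\D^s(\R^n)$, weakly solves $(-\Delta)^s \tilde u_s = \tilde u_s^{2^*_s-1}$ in $\R^n$ with $\tilde u_s>0$, is radial about the origin, and attains its supremum at $0$.

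The first step is to invoke the classification theorem for positive solutions to the critical fractional problem (established by Chen--Li--Ou for $s=1/2$, and extended to all $s \in (0,1)$ via the method of moving planes in integral form, or equivalently via the Caffarelli--Silvestre extension): every positive $u \in \D^s(\R^n)$ satisfying $(-\Delta)^s u = u^{2^*_s-1}$ in $\R^n$ must coincide with one of the standard fractional bubbles $U_{\tilde x_0, \mu}$ given by \eqref{fracbubble}, for some $\tilde x_0 \in \R^n$ and $\mu>0$. Applying this to $\tilde u_s$ yields $\tilde u_s = U_{\tilde x_0, \mu}$ for suitable parameters.

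The second step is to identify the center. Since each $U_{\tilde x_0, \mu}$ is rotationally symmetric about $\tilde x_0$ and strictly decreasing along the radii from $\tilde x_0$, and since $\tilde u_s$ is rotationally symmetric about the origin by Lemma \ref{L:asyntrescal}, the uniqueness of the symmetry axis forces $\tilde x_0 = 0$. Equivalently, one can argue from $|\tilde u_s|_\infty = \tilde u_s(0)$: the unique maximum point of $U_{\tilde x_0, \mu}$ is $\tilde x_0$, hence $\tilde x_0 = 0$. This gives $\tilde u_s = U_{0,\mu}$ for some $\mu>0$, which is the conclusion.

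The only substantive ingredient is the invocation of the classification result; once that is available, pinning down the center via radiality is immediate. Note that the value of $\mu$ is not pinned down here, so no further computation is required.
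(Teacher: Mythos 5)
Your argument is correct, but it follows a genuinely different route from the paper's. You invoke the Liouville-type classification of all positive finite-energy solutions to the critical fractional equation $(-\Delta)^s u = u^{2^*_s-1}$ on $\R^n$ (Chen--Li--Ou / Chen--Li--Li type; the paper's reference \cite{ChenLiLi} supplies one version), and then pin down the center from radiality and the location of the maximum. The paper instead avoids invoking any Liouville theorem: it shows directly that $\tilde u_s$ realizes the Sobolev constant $S_s$. One inequality comes for free from the equation, which gives $\|\tilde u_s\|^2_s = |\tilde u_s|^{2^*_s}_{2^*_s}$ and hence $S_s^{n/(2s)} \leq |\tilde u_s|^{2^*_s}_{2^*_s}$; the reverse inequality $|\tilde u_s|^{2^*_s}_{2^*_s} \leq S_s^{n/(2s)}$ is extracted by truncating $u_{s,\ep}$ to its first nodal annulus, rescaling, using Lemma~\ref{L:maxasynt}(ii) to show the truncated rescaled functions converge uniformly to $\tilde u_s$, and then applying Fatou together with Lemma~\ref{As:energas}. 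Once $\tilde u_s$ is identified as a Sobolev extremal, it must be a bubble by the (easier) characterization of extremals \cite{CotTav}, and radiality fixes the center. In short: you trade the paper's nodal-domain energy estimate for a stronger classification black box. Both are valid; the paper's route stays within the regularity-and-energy machinery it has already built (and it produces the extra identity $|\tilde u_s|^{2^*_s}_{2^*_s} = S_s^{n/2s}$ as a byproduct), while yours is shorter but imports a more powerful external theorem that the paper does not use.
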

\begin{proof}
Since $\tilde u_s$ is radial and satisfies \eqref{eq:criticalproblemrn}, in order to prove the desired result we only need to show that $\tilde u_s$ realizes the infimum in the fractional Sobolev inequality, that is, 
\[
S_s = \frac{\|\tilde u_s\|^2_s}{|\tilde u_s|^2_{2^*_s}}.
\]
Now, since $\|\tilde u_s\|^2_s = |\tilde u_s|^{2^*_s}_{2^*_s}$, by the Sobolev inequality, we readily infer that
\[
S_s \leq \frac{\|\tilde u_s\|^2_s}{|\tilde u_s|^{2}_{2^*_s}}\leq |\tilde u_s|^{2^*_s-2}_{2^*_s}, 
\]
and to conclude it suffices to show that $|\tilde u_s|^{2^*_s}_{2^*_s}\leq S_s^{\frac{n}{2s}}$.
To this end, we set
\[
u_{s, \varepsilon}^1(x) := 
\begin{cases}
u_{s, \varepsilon}(x) & |x|\leq r^1_\varepsilon, \\
0 & \text{otherwise},
\end{cases}
\]
and define
\[
\tilde u_\varepsilon^1 (x):= \frac{1}{M_{s, \varepsilon}^+}u_{s, \varepsilon}^1\left(\frac{x}{(M_{s, \varepsilon}^+)^{\beta_{s, \varepsilon}}}\right), \quad x \in \R^n.
\]
Thanks to Lemma \ref{L:maxasynt}, for all sufficiently small $\ep>0$ we have $M_{s, \varepsilon} = M_{s, \varepsilon}^+$. Then we readily infer that $|\tilde u_{s, \varepsilon} - \tilde u_{s, \varepsilon}^1|_\infty \leq \frac{\hat M_{s, \varepsilon}}{M_{s, \varepsilon}}$ which, again by Lemma \ref{L:maxasynt}, implies that $\tilde u_{s, \varepsilon}- \tilde u_{s, \varepsilon}^1\to 0$ uniformly in $\R^n$, as $\varepsilon \to 0^+$. Then, by Fatou's Lemma, taking into account that $|\tilde u^1_{s, \varepsilon}|^{2^*_s-\varepsilon}_{2^*_s-\varepsilon}=\left(M_{s,\ep}^+\right)^{-\ep \left(\frac{n-2s}{2s}\right)} | u^1_{s, \varepsilon}|^{2^*_s-\varepsilon}_{2^*_s-\varepsilon}$, $M_{s,\ep}^+\geq 1$ and Lemma \ref{As:energas}, we get that
\[
|\tilde u_s|^{2^*_s}_{2^*_s} \leq \liminf_{\ep \to 0^+}|\tilde u^1_{s, \varepsilon}|^{2^*_s-\varepsilon}_{2^*_s-\varepsilon} \leq \liminf_{\ep \to 0^+}| u^1_{s, \varepsilon}|^{2^*_s-\varepsilon}_{2^*_s-\varepsilon} \leq \liminf_{\ep \to 0^+}|u^+_{s, \varepsilon}|^{2^*_s-\varepsilon}_{2^*_s-\varepsilon} = S_s^{\frac{n}{2s}}.
\]
The proof is then complete.

\end{proof}
\end{section}

\begin{section}{Characterization of the nodal set}
In this section we study the nodal set of least energy radial sign-changing solutions to Problem \eqref{fracradsubcrit}. We begin with a couple of known preliminary results, which provide, respectively, an upper bound on the number of sign changes and a characterization of the nodal set.
\begin{lemma}\label{changebound}
Let $n>2s$, $s \in (0,1)$. Let $u_{s,\varepsilon}$ be a least energy radial sign-changing solution to Problem \eqref{fracradsubcrit}. There exists $\tilde \varepsilon_s \in (0, 2^*_s-2)$ such that, if $\varepsilon \in (0, \tilde \varepsilon _s)$, then $u_{s, \varepsilon} = u_{s,\varepsilon}(r)$ changes sign at most twice.

Let $0<s_0< s_1\leq 1$ and $n>2s_1$. Then there exists $\tilde \varepsilon>0$, independent on $s$, such that the same result holds for every $s \in [s_0, s_1)$ and $\varepsilon \in (0, \tilde \varepsilon)$.
\end{lemma}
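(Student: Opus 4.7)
My plan is to argue by contradiction via an energy-comparison argument, in the spirit of the fractional adaptation carried out in \cite[Lemma 4.8]{CorIac}. I would first suppose, toward a contradiction, that there are sequences $s_n \in [s_0, s_1)$ and $\varepsilon_n \to 0^+$ such that the associated least energy radial nodal solutions $u_n := u_{s_n, \varepsilon_n}$ change sign at least three times, so each $u_n$ possesses $k_n \geq 4$ nodal annular regions $A_1, \ldots, A_{k_n}$ with alternating signs. Setting $u_{n,i} := u_n \chi_{A_i} \in X_0^{s_n}(B_R)$ and $p_n := 2^*_{s_n} - \varepsilon_n$, I would exploit the disjoint-support identity
\[
(u_{n,j}, u_{n,i})_{s_n} \;=\; -C_{n,s_n}\iint \frac{u_{n,i}(x)\,u_{n,j}(y)}{|x-y|^{n+2s_n}}\,dx\,dy, \qquad j \neq i,
\]
and test the equation against each $u_{n,i}$ to derive the quasi-Nehari identity
\[
\|u_{n,i}\|_{s_n}^{2} \;=\; |u_{n,i}|_{p_n}^{p_n} \;-\; \sum_{j \neq i}(u_{n,j}, u_{n,i})_{s_n}.
\]

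The central step will be a per-component lower bound: combining the quasi-Nehari identity with the Sobolev inequality $S_{s_n, \varepsilon_n}|u_{n,i}|_{p_n}^{2} \leq \|u_{n,i}\|_{s_n}^{2}$ and with the bubble-tower concentration established in Section 4 (so that the various nodal components live at well-separated scales around the origin), I would aim to show that at least $k_n - 1$ of the components satisfy
\[
|u_{n,i}|_{p_n}^{p_n} \;\geq\; (1 - \eta_n)\,S_{s_n, \varepsilon_n}^{\,p_n/(p_n - 2)}, \qquad \eta_n \to 0.
\]
Using \eqref{E:posenequiv} and the full Nehari identity $\|u_n\|_{s_n}^{2} = |u_n|_{p_n}^{p_n} = \sum_i |u_{n,i}|_{p_n}^{p_n}$, summing over the ``good'' components gives
\[
I_{s_n, \varepsilon_n}(u_n) \;=\; \Big(\tfrac{1}{2} - \tfrac{1}{p_n}\Big)\,|u_n|_{p_n}^{p_n} \;\geq\; (k_n - 1)(1 - \eta_n)\,C_{\mathcal{N}(B_R)}(s_n, \varepsilon_n).
\]
Since $k_n \geq 4$, the uniform asymptotics of Lemma \ref{AsintPos} yield $\liminf_n I_{s_n, \varepsilon_n}(u_n) \geq \tfrac{3\bar{s}}{n}\,S_{\bar{s}}^{\,n/(2\bar{s})}$ for any cluster point $\bar{s}$ of $(s_n)$, contradicting the upper bound $I_{s_n, \varepsilon_n}(u_n) = C_{\mathcal{M}^r(B_R)}(s_n, \varepsilon_n) \to \tfrac{2\bar{s}}{n}\,S_{\bar{s}}^{\,n/(2\bar{s})}$ supplied by Lemma \ref{L:nodalenerasynt}.

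The main obstacle will be controlling the nonlocal cross-interactions $\sum_{j \neq i}(u_{n,j}, u_{n,i})_{s_n}$ finely enough to promote the Sobolev inequality into the quantitative lower bound above with $\eta_n \to 0$: unlike the local case $s = 1$, where the restriction of a solution to a single nodal region is itself a solution of the restricted problem and hence verifies the Nehari identity exactly, in the fractional setting components interact at all scales. Handling this will rely on the quantitative bubble-tower decomposition of Section 4, which shows that distinct nodal components concentrate at the origin at strictly separated scales, making their mutual interactions asymptotically of lower order than the leading Sobolev mass of each component. Finally, the uniformity in $s \in [s_0, s_1)$ will follow from the $s$-independent H\"older estimate of Proposition \ref{P:unifholderineq} together with the uniform energy asymptotics of Lemmas \ref{AsintPos} and \ref{L:nodalenerasynt}.
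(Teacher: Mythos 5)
The central step of your plan (``at least $k_n - 1$ of the components satisfy $|u_{n,i}|_{p_n}^{p_n}\geq(1-\eta_n)S_{s_n,\varepsilon_n}^{p_n/(p_n-2)}$'') does not follow from the ingredients you invoke, and this is a genuine gap. Look at the sign structure in your quasi-Nehari identity $\|u_{n,i}\|_{s_n}^2=|u_{n,i}|_{p_n}^{p_n}-\sum_{j\neq i}(u_{n,j},u_{n,i})_{s_n}$. For disjoint supports one has $(u_{n,j},u_{n,i})_{s_n}=-C_{n,s_n}\iint u_{n,j}(x)u_{n,i}(y)/|x-y|^{n+2s_n}$, so an opposite-sign pair gives $(u_{n,j},u_{n,i})_{s_n}>0$, while a \emph{same-sign} pair gives $(u_{n,j},u_{n,i})_{s_n}<0$. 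The opposite-sign terms are indeed controlled: they are positive and their full sum is $(u_n^+,u_n^-)_{s_n}$, which tends to zero by Lemma~\ref{As:energas}-(iii). But the same-sign terms enter the identity with a minus sign and are themselves negative, so they push $\|u_{n,i}\|_{s_n}^2$ \emph{above} $|u_{n,i}|_{p_n}^{p_n}$, not below. Thus the chain $S_{s_n,\varepsilon_n}|u_{n,i}|_{p_n}^2\leq\|u_{n,i}\|_{s_n}^2\leq(1+o(1))|u_{n,i}|_{p_n}^{p_n}$, which is what you need to conclude $|u_{n,i}|_{p_n}^{p_n-2}\geq(1-o(1))S_{s_n,\varepsilon_n}$, fails unless you prove that the same-sign interactions $|(u_{n,j},u_{n,i})_{s_n}|$ (with $j,i$ of the same sign, hence at least two nodal annuli apart) vanish. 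As soon as $k_n\geq 4$ every nodal component has at least one same-sign partner, so no component is exempt from this difficulty, and your claim that ``$k_n-1$ of the $k_n$'' components do enjoy the bound is not motivated.

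Your intended remedy — the ``quantitative bubble-tower decomposition of Section 4'' — is not available. Lemma~\ref{L:maxasynt} only compares the first nodal annulus (the one containing the origin) to the union of all the others: it shows $M_{s,\varepsilon}=M_{s,\varepsilon}^+$, $(M_{s,\varepsilon}^+)^{\beta_{s,\varepsilon}}r_\varepsilon^1\to+\infty$ and $M_{s,\varepsilon}^+/\hat M_{s,\varepsilon}\to+\infty$. It says nothing about a separation of scales among the subsequent nodal annuli $A_2,A_3,\dots$, and $(u_\varepsilon^+,u_\varepsilon^-)_{s}\to 0$ cannot be converted into a statement about two given \emph{same-sign} regions. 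Producing the full tower decomposition for all components would normally require already knowing that the number of nodal regions stays bounded, which is precisely what you are trying to prove, so the argument risks circularity. The paper itself defers this lemma to \cite[Theorems 5.1 and 5.2]{CorIac} (fed with Lemmas~\ref{L:nodalenerasynt} and~\ref{As:energas}); that reference runs a different argument which does not pass through a per-nodal-region Sobolev lower bound and in particular does not require controlling the same-sign nonlocal cross-terms region by region.
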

\begin{proof}
It suffices to argue as in \cite[Theorem 5.1]{CorIac} first, and then as in \cite[Theorem 5.2]{CorIac}, taking into account Lemma \ref{L:nodalenerasynt} and Lemma \ref{As:energas}. In particular $\hat \varepsilon>0$ is  given by Lemma \ref{L:nodalenerasynt}.
\end{proof}


\begin{lemma}\label{L:nodalchar}
Let $s \in \left(0, 1\right)$ and $n>2s$. There exists $\check \varepsilon_s>0$ such that  for all $\varepsilon \in (0, \check \varepsilon_s)$ any least energy radial sign-changing solution $u_{s,\varepsilon}$ to \eqref{fracradsubcrit} vanishes only at the nodes.

Moreover, let $0<s_0< s_1 \leq 1$, $n >2s_1$. Then the above result hold true for every $s \in [s_0, s_1)$ and $\varepsilon \in (0, \check \varepsilon)$, for some $\check \varepsilon>0$ independent on $s$.
\end{lemma}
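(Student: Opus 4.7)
The plan is to argue by contradiction. We would suppose there exist sequences $s_k \in [s_0,s_1)$ with $s_k \to \sigma \in [s_0,s_1]$, $\ep_k \to 0^+$, and radial least energy sign-changing solutions $u_k := u_{s_k,\ep_k}$ vanishing at some sphere $|x|=\rho_k>0$ which is not a node, i.e.\ $u_k$ keeps constant sign in a radial neighborhood of $\rho_k$. By Lemma \ref{L:nozeroorigin} we may assume $u_k(0)>0$, so $\rho_k \neq 0$, and by Lemma \ref{changebound} $u_k$ changes sign at most twice for $k$ large. Combining the touching zero with the sign change(s) forces $\{u_k \neq 0\}$ to split into $N_k \geq 3$ connected radial components $D_{k,1}, \ldots, D_{k,N_k}$, yielding the decomposition $u_k = \sum_{i=1}^{N_k} \sigma_{k,i}\, v_{k,i}$ with $\sigma_{k,i}\in \{\pm 1\}$ and $v_{k,i} := |u_k|\, \chi_{D_{k,i}} \geq 0$.

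The key step would be an energy estimate on each $v_{k,i}$. Using the uniform H\"older regularity of the rescaled solutions (Proposition \ref{P:unifholderineq}) together with the vanishing of $u_k$ on $\partial D_{k,i}$, we would check that $v_{k,i} \in X^{s_k}_0(B_R)$, so that testing the equation against it yields
\[
(u_k, v_{k,i})_{s_k} = |v_{k,i}|_{p_k+1}^{p_k+1},\qquad p_k := 2^*_{s_k} - 1 - \ep_k.
\]
Expanding $u_k = \sum_j \sigma_{k,j} v_{k,j}$ on the left and exploiting that $(v_{k,i}, v_{k,j})_{s_k} < 0$ for $i \neq j$ (nonnegative functions with disjoint supports), we would reach
\[
\|v_{k,i}\|_{s_k}^2 = |v_{k,i}|_{p_k+1}^{p_k+1} + \sum_{j\neq i} \sigma_{k,i}\sigma_{k,j}\, |(v_{k,i}, v_{k,j})_{s_k}|.
\]
The opposite-sign cross terms ($\sigma_{k,i}\sigma_{k,j}=-1$) would be $o(1)$ by Lemma \ref{As:energas}(iii), while the unique same-sign cross term -- occurring between the two same-sign lobes separated by the touching zero -- would be handled through a refined rescaling argument combining the bubble convergence of Lemmas \ref{L:asyntrescal}--\ref{positivebubble} with the uniform H\"older estimate, showing that one of the two lobes carries asymptotically vanishing $X_0^{s_k}$-norm.

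Coupling these identities with the fractional Sobolev inequality $S_{s_k,\ep_k}\,|v_{k,i}|_{p_k+1}^2 \leq \|v_{k,i}\|_{s_k}^2$ and with $S_{s_k,\ep_k}\to S_\sigma$ from Lemma \ref{AsintPos}, we would derive $|v_{k,i}|_{p_k+1}^{p_k+1} \geq S_\sigma^{n/(2\sigma)} + o(1)$ for each non-degenerate lobe. On the other hand, the Nehari identity $\|u_k\|_{s_k}^2 = \int_{B_R}|u_k|^{p_k+1}\de x$ combined with Lemma \ref{L:nodalenerasynt} gives
\[
\int_{B_R} |u_k|^{p_k+1}\de x = \frac{2(p_k+1)}{p_k-1}\,C_{\mathcal{M}^r(B_R)}(s_k,\ep_k) \longrightarrow 2\, S_\sigma^{n/(2\sigma)}.
\]
Summing over the $N_k\geq 3$ components would then yield $2S_\sigma^{n/(2\sigma)} + o(1) \geq 3S_\sigma^{n/(2\sigma)} + o(1)$, a contradiction. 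The hard part will be the control of the same-sign cross term: in the degenerate scenario where the small lobe created by the touching zero has vanishing mass, the rescaled function $\tilde u_k$ has a touching zero $R_k$ going to infinity, and one must combine radial symmetry, uniform H\"older continuity, and the decay of the limit bubble $U_{0,\mu}$ at infinity to rule out such a profile.
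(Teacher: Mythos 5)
Your setup is sound (contradiction, Lemma \ref{L:nozeroorigin} to rule out a zero at the origin, Lemma \ref{changebound} to bound the number of sign changes, splitting into $N_k\geq 3$ radial components, testing the equation against each lobe), and the Nehari-type identity you derive,
\[
\|v_{k,i}\|_{s_k}^2 = |v_{k,i}|_{p_k+1}^{p_k+1} + \sum_{j\neq i}\sigma_{k,i}\sigma_{k,j}\,|(v_{k,i},v_{k,j})_{s_k}|,
\]
together with the observation that opposite-sign cross terms are $o(1)$ by Lemma \ref{As:energas}(iii), is correct. The problem is that you have not actually resolved the difficulty you identify as ``the hard part.'' For a lobe $v_{k,i}$ touching another lobe of the same sign, the term $+\,|(v_{k,i},v_{k,j})_{s_k}|$ appears with a plus sign, and this is precisely what prevents you from deducing $S_{s_k,\ep_k}|v_{k,i}|^2_{p_k+1}\leq |v_{k,i}|_{p_k+1}^{p_k+1}+o(1)$. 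For two nonnegative radial functions with \emph{adjacent} supports separated by a single touching sphere, the interaction $C_{n,s}\iint v_{k,i}(x)v_{k,j}(y)|x-y|^{-n-2s}\de x\de y$ has no reason to vanish: the singular kernel makes it large, not small, when the supports touch. So the estimate $|v_{k,i}|_{p_k+1}^{p_k+1}\geq S_\sigma^{n/(2\sigma)}+o(1)$ is only justified for lobes whose sole same-sign neighbour has vanishing norm, and you never establish that \emph{this} is the alternative that must hold; the dichotomy ``cross term $o(1)$ or small lobe has vanishing mass'' is asserted, not proved.

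Moreover, even granting the dichotomy, the degenerate branch does not close. If the touching-zero lobe has vanishing $X^{s_k}_0$-norm, then that lobe contributes $o(1)$ to $\int|u_k|^{p_k+1}$, so your sum gives only $2S_\sigma^{n/(2\sigma)}+o(1)\geq 2S_\sigma^{n/(2\sigma)}+o(1)$, which is not a contradiction. Your suggested fix (a touching zero escaping to infinity after rescaling, to be ruled out by the decay of the bubble) does not obviously supply the missing contradiction: the convergence $\tilde u_{s_k,\ep_k}\to U_{0,\mu}$ of Lemma \ref{L:asyntrescal} is only in $C^{0,\alpha}_{loc}(\R^n)$, and the rescaled solution genuinely vanishes and changes sign far from the origin (e.g.\ at the sign-change radius $r^1_{\ep_k}M_{s_k,\ep_k}^{\beta_{s_k,\ep_k}}\to+\infty$), so a zero wandering off to infinity is not by itself incompatible with the bubble profile. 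There is also a technical point glossed over: membership $v_{k,i}\in X^{s_k}_0(B_R)$ of a sharp cutoff of a $C^{0,s_k}$ function along an interior sphere is not automatic for $s_k$ close to $1$, and needs the boundary-weighted H\"older estimates of \cite{RosSer} rather than the plain $C^{0,s}$ bound. The paper itself sidesteps all this by combining Lemma \ref{L:nozeroorigin} and Lemma \ref{changebound} with the argument of \cite[Theorem 1.2]{CorIac}, which handles the touching-zero analysis with a dedicated rescaling around the relevant node. Your energy-bookkeeping framework is a reasonable start, but the crucial step is missing.
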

\begin{proof}
It suffices to take ${\check \varepsilon}_s := \min\{\overline \varepsilon_s, \tilde \varepsilon_s\}$,
where $\overline \varepsilon_s, \tilde \varepsilon_s$ are given by Lemma \ref{L:nozeroorigin} and Lemma \ref{changebound}, respectively. Then, the results follows immediately by adapting the arguments of \cite[Theorem 1.2]{CorIac}. 
\end{proof}


In the next Lemma we prove the upper semi-continuity of the map $s \to C_{\mathcal{M}^r(B_R)}(s, \varepsilon)$.
\begin{lemma}\label{uppersemiconts}
Let $0<s_0<s_1 \leq 1$, $n >2s_1$ and $\varepsilon \in (0, \check \varepsilon)$, where $\check \varepsilon$ is given by in Lemma \ref{L:nodalchar}. Then for every $\sigma \in [s_0, s_1]$ we have
\[
\limsup_{s \to \sigma}C_{\mathcal{M}^r(B_R)}(s, \varepsilon) \leq C_{\mathcal{M}^r(B_R)}(\sigma, \varepsilon).
\]
\end{lemma}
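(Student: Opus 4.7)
The plan is to produce, for each $s$ close to $\sigma$, an admissible radial competitor $v_s \in \mathcal{M}^r_{s,\varepsilon}(B_R)$ of the form $v_s = \alpha_s u_\sigma^+ - \beta_s u_\sigma^-$, where $u_\sigma$ is a minimizer realizing $C_{\mathcal{M}^r(B_R)}(\sigma,\varepsilon)$, and then to show $I_{s,\varepsilon}(v_s) \to I_{\sigma,\varepsilon}(u_\sigma)$ as $s\to\sigma$. Since $C_{\mathcal{M}^r(B_R)}(s,\varepsilon) \leq I_{s,\varepsilon}(v_s)$, the upper semi-continuity follows at once.

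The first ingredient is that $u_\sigma^\pm$ live in $X_0^s(B_R)$ for all $s$ in a neighborhood of $\sigma$. Since $\varepsilon < \check\varepsilon$, Lemma \ref{L:nodalchar} guarantees that $u_\sigma$ vanishes only at its (finitely many, by Lemma \ref{changebound}) nodal radii, so $u_\sigma^\pm \not\equiv 0$ and they are genuine radial functions. By the interior--boundary regularity theory for the fractional Laplacian on balls (see \cite{RosSer}), $u_\sigma$ is smooth inside $B_R$ and satisfies $|u_\sigma(x)| \leq C\,\mathrm{dist}(x,\partial B_R)^\sigma$. A direct computation of the Gagliardo seminorm (splitting into interior and boundary layer contributions) then shows $u_\sigma \in X_0^s(B_R)$ for all $s \in (\sigma - \delta_0,\, \sigma + \delta_0)$, for some $\delta_0 > 0$; one can equivalently argue via the Fourier representation, since the boundary estimate yields $u_\sigma \in H^{s'}(\R^n)$ for every $s' < \sigma + 1/2$. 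The same holds for $u_\sigma^\pm$, as taking positive/negative parts preserves $C^{0,\sigma}$ regularity.

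Next, the existence of $(\alpha_s,\beta_s)\in(0,\infty)^2$ such that $v_s \in \mathcal{M}^r_{s,\varepsilon}(B_R)$ is a standard fact about the sign-changing Nehari projection (as in \cite{TWW}): one solves the $2\times 2$ system
\begin{align*}
\alpha^2 \|u_\sigma^+\|_s^2 - \alpha\beta\,(u_\sigma^+, u_\sigma^-)_s &= \alpha^{2^*_s-\varepsilon}\,|u_\sigma^+|_{2^*_s-\varepsilon}^{2^*_s-\varepsilon},\\
\beta^2 \|u_\sigma^-\|_s^2 - \alpha\beta\,(u_\sigma^+, u_\sigma^-)_s &= \beta^{2^*_s-\varepsilon}\,|u_\sigma^-|_{2^*_s-\varepsilon}^{2^*_s-\varepsilon},
\end{align*}
with $(u_\sigma^+,u_\sigma^-)_s<0$, and uniqueness together with the fact that $u_\sigma\in \mathcal{M}^r_{\sigma,\varepsilon}(B_R)$ forces $(\alpha_\sigma,\beta_\sigma)=(1,1)$. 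Then I would show $(\alpha_s,\beta_s)\to(1,1)$ as $s\to\sigma$ by passing to the limit in the system: the $L^p$ coefficients converge by dominated convergence (since $u_\sigma$ is bounded and compactly supported, so $|u_\sigma^\pm|^{2^*_s-\varepsilon}$ is dominated by an integrable envelope uniformly for $s$ near $\sigma$); the seminorms $\|u_\sigma^\pm\|_s$ and the cross term $(u_\sigma^+,u_\sigma^-)_s$ converge by dominated convergence applied to $|\xi|^{2s}|\widehat{u_\sigma^\pm}(\xi)|^2$ in the Fourier representation, using the $H^{s'}$-integrability established in the previous step to build the envelope.

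Granted these convergences, $I_{s,\varepsilon}(v_s)\to I_{\sigma,\varepsilon}(u_\sigma)=C_{\mathcal{M}^r(B_R)}(\sigma,\varepsilon)$, which closes the argument. The main technical obstacle is the first step, namely exhibiting membership $u_\sigma\in X_0^s(B_R)$ for $s$ slightly \emph{larger} than $\sigma$: the embedding $X_0^\sigma\hookrightarrow X_0^s$ is immediate for $s<\sigma$, but for $s>\sigma$ one must exploit the precise $\delta^\sigma$ boundary behavior of the minimizer (through \cite{RosSer}) to guarantee finiteness of the Gagliardo seminorm. Once this slight gain of Sobolev regularity is available, all the subsequent $s$-continuity statements reduce to routine dominated convergence.
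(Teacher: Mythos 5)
Your strategy is conceptually sound and genuinely different from the paper's. Where the paper first approximates the minimizer $u_{\sigma,\varepsilon}$ by a sequence $(\varphi_j)\subset C^\infty_c(B_R)\cap\mathcal M^r_{\sigma,\varepsilon}(B_R)$ (using density of $C^\infty_c$ in $X^\sigma_0$ together with Lemma~\ref{L:nodalchar} to control the supports of $\varphi_j^\pm$), then re-projects each fixed $\varphi_j$ onto $\mathcal M^r_{s_k,\varepsilon}(B_R)$ and finally diagonalizes, you skip the first approximation layer and project $u_{\sigma,\varepsilon}$ itself. That is a cleaner one-step argument, \emph{provided} the Nehari projection of $u_{\sigma,\varepsilon}$ makes sense for $s>\sigma$, and this is exactly where you have an unproven ingredient.

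The gap is the step ``$u_{\sigma,\varepsilon}\in X^s_0(B_R)$ for $s\in(\sigma,\sigma+\delta_0)$.'' You assert it via two sketches: a ``direct computation'' splitting interior and boundary-layer contributions, and a Fourier argument yielding $u_{\sigma,\varepsilon}\in H^{s'}(\R^n)$ for all $s'<\sigma+1/2$. Neither is a corollary of anything cited. The Ros-Oton--Serra bounds (\cite{RosSer}) are $C^{0,\sigma}(\R^n)$ and $u/\delta^\sigma\in C^\alpha(\overline{B_R})$; translating that into a \emph{gain} of Sobolev regularity past order $\sigma$ is a real theorem (essentially that $\delta^\sigma_+$ times a H\"older function lies in $H^{s'}$ for $s'<\sigma+1/2$), not a routine observation. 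It is true, but it needs to be proved (e.g. by flattening the boundary and estimating the Fourier transform of $t_+^\sigma v(t)$ for $v$ H\"older), or replaced by a reference. You flagged this yourself as ``the main technical obstacle''; in a written-up version it must be resolved, not deferred. A secondary remark: when you invoke the Fourier identity $\|u\|_s^2=\int|\xi|^{2s}|\widehat u|^2\,d\xi$, recall that this identity already encodes the $s$-dependent constant $C_{n,s}$ from the Gagliardo form, so the dominated-convergence argument you run is for the full norm and is fine, but it is worth saying so, given that $C_{n,s}\to 0$ as $s\to1$.

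The remainder of your proposal — existence and uniqueness of $(\alpha_s,\beta_s)$ with $\alpha_s u_\sigma^+-\beta_s u_\sigma^-\in\mathcal M^r_{s,\varepsilon}(B_R)$, the algebraic system forcing $(\alpha_\sigma,\beta_\sigma)=(1,1)$, passage to the limit via convergence of $\|\cdot\|_s$-norms, $(u_\sigma^+,u_\sigma^-)_s$, and the $L^{2^*_s-\varepsilon}$ norms — is correct and mirrors what the paper does in its two projection steps, so once the regularity gain is established the argument closes. What each route buys: the paper pays with an extra approximation/diagonalization (and needs Lemma~\ref{L:nodalchar} to ensure the nodal set is a finite union of spheres so that the supports of $\varphi_j^\pm$ can be separated), but never needs any Sobolev regularity above order~$\sigma$; you avoid the diagonalization entirely, but must invoke (and prove) a boundary-regularity-to-Sobolev improvement that is outside the toolbox the paper assembles. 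Also note that the Lipschitz kinks of $u_\sigma^\pm$ at the interior nodes are harmless ($H^{3/2-\epsilon}$ locally), so the only genuine obstruction is the $\delta^\sigma$ behavior at $\partial B_R$; you may want to state that explicitly so the reader sees where the $\delta_0$ actually comes from.
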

\begin{proof}
Let us fix $s_0$, $s_1$, $n$ and $\ep$ as in the statement. Let $(s_k)_k \subset [s_0, s_1)$ be a sequence such that $s_k \to \sigma \in [s_0, s_1]$, and consider a radial solution $u_{\sigma, \varepsilon}$ of \eqref{fracradsubcrit} which realizes $C_{\mathcal{M}^r(B_R)}(\sigma, \varepsilon)$. Assume that $\sigma <1$.  We aim to construct a sequence of almost minimizers of $C_{\mathcal{M}^r}(s_k, \varepsilon)$. We proceed in three different steps. We point out that when $\sigma=1$ the proof is identical, taking into account the conventions $(-\Delta)^1 u = -\Delta u$, $\|u\|^2_1 = |\nabla u|^2_2$, and that $(u^+,u^-)_1 \equiv 0 $ for all $u \in H^1_0(B_R)$.
\medskip

\vspace{2mm}
\begin{cla}
There exists a sequence $(\varphi_j)_j \subset C^\infty_c(B_R) \cap \mathcal{M}^r_{\sigma,\varepsilon}(B_R)$ such that 
\begin{enumerate}
\item $\text{supp}(\varphi_j^\pm) \subset \text{supp }(u^\pm_{\sigma, \varepsilon})$,
\item $\varphi_j \to u_{\sigma, \varepsilon}$ in $X^\sigma_0(B_R)$, as $j \to +\infty$, 
\item $I_{\sigma, \varepsilon}(\varphi_j) \to I_{\sigma,\varepsilon}(u_{\sigma, \varepsilon})$, as $j \to +\infty$. 
\end{enumerate}
\end{cla}
\vspace{2mm}

We first observe that, thanks to Lemma \ref{L:nodalchar}, the boundaries of $\text{supp }(u^\pm_{\sigma, \varepsilon})$ consist in a finite union of spheres. Therefore, adapting known density results (see e.g. \cite{FisSerVal}) we find two sequences of radial functions $(\tilde \varphi_j^\pm)_j \subset C^\infty_c(B_R)$ such that $\tilde \varphi_j^\pm \geq 0$, $\text{supp}(\tilde \varphi_j^\pm) \subset \text{supp }(u^\pm_{\sigma, \varepsilon})$ for all $j$, and $\tilde \varphi^\pm_j \to u_{\sigma, \varepsilon}^\pm$ in $X^\sigma_0(B_R)$.
Observe that, from the continuity of the scalar product, we have $(\tilde \varphi_j^+, \tilde \varphi_j^-)_\sigma \to (u_{\sigma, \varepsilon}^+, u_{\sigma, \varepsilon}^-)_\sigma$. 

Now we recall that it is always possible to find $\alpha_j>0$, $\beta_j>0$ such that  $\alpha_j \tilde \varphi_j^+ - \beta_j \tilde \varphi_j^- \in \mathcal{M}^r_{\sigma, \varepsilon}(B_R)$ (see e.g. \cite[Remark 3.4]{CorIac}), which is equivalent to solving the following
\begin{equation}\label{neharicondition}
\begin{aligned}
\alpha_j^{2^*_\sigma-2-\varepsilon}|\tilde \varphi_j^+|^{2^*_\sigma-\varepsilon}_{2^*_\sigma-\varepsilon} +\frac{\beta_j}{\alpha_j}(\tilde \varphi_j^+, \tilde \varphi_j^-)_\sigma &= \|\tilde \varphi^+_j\|^2_\sigma,\\
\beta_j^{2^*_\sigma-2-\varepsilon}|\tilde \varphi_j^-|^{2^*_\sigma-\varepsilon}_{2^*_\sigma-\varepsilon} +\frac{\alpha_j}{\beta_j}(\tilde \varphi_j^+, \tilde \varphi_j^-)_\sigma &= \|\tilde \varphi^-_j\|^2_\sigma.
\end{aligned}
\end{equation}
We claim that, definitely, $0<\underline\alpha<\alpha_j<\overline\alpha$ and $0<\underline\beta<\beta_j<\overline\beta$, for some positive constants $\underline\alpha, \overline\alpha, \underline\beta, \overline\beta$.
 Indeed, since $\tilde \varphi^\pm_j \to u_{\sigma, \varepsilon}^\pm$, and $u_{\sigma, \varepsilon}^\pm$ are non trivial, then the quantities $|\tilde \varphi_j^\pm|^{2^*_\sigma-\varepsilon}_{2^*_\sigma-\varepsilon}$,  $\|\tilde \varphi^\pm_j\|^2_\sigma$, $(\tilde \varphi_j^+, \tilde \varphi_j^-)_\sigma$ are uniformly bounded and uniformly away from zero. Moreover, by the definition of the scalar product we always have $(\tilde \varphi_j^+, \tilde \varphi_j^-)_\sigma<0$. Then, treating \eqref{neharicondition} as an algebraic system in $\alpha_j$, $\beta_j$ having as coefficients $|\tilde \varphi_j^\pm|^{2^*_\sigma-\varepsilon}_{2^*_\sigma-\varepsilon}$,  $\|\tilde \varphi^\pm_j\|^2_\sigma$, $(\tilde \varphi_j^+, \tilde \varphi_j^-)_\sigma$, it is easy to verify that, up to a sequence, it cannot happen that $\alpha_j \to +\infty$ or $\alpha_j\to 0^+$, and the same holds for $\beta_j$. The claim is thus proved.


Let us consider the sequence defined by $\varphi_j := \alpha_j \tilde \varphi_j^+ - \beta_j \tilde \varphi_j^-$.
By construction $(\varphi_j)_j \subset C^\infty_c(B_R)$, and in view of \eqref{neharicondition} we have $(\varphi_j)_j \subset \mathcal{M}^r_{\sigma, \varepsilon}(B_R)$. We claim that $\varphi_j \to u_{\sigma, \varepsilon}$ in $X^\sigma_0(B_R)$. 

Indeed, observe that, since $u_{\sigma, \varepsilon} \in \mathcal{M}_{\sigma, \varepsilon}^r(B_R)$, then
\[
\|u_{\sigma, \varepsilon}^\pm\|^2_\sigma = |u_{\sigma, \varepsilon}^\pm|^{2^*_\sigma-\varepsilon}_{2^*_\sigma-\varepsilon} + (u_{\sigma, \varepsilon}^+, u_{\sigma, \varepsilon}^-)_\sigma,
\]
then, up to a sequence, setting $\alpha:=\lim_{j \to +\infty} \alpha_j$, $\beta:=\lim_{j\to +\infty} \beta_j$ and passing to the limit in \eqref{neharicondition} we infer that
\[
\begin{aligned}
&\alpha(\alpha^{2^*_\sigma-2- \varepsilon}-1)|u_{\sigma, \varepsilon}^+|^{2^*_\sigma-\varepsilon}_{2^*_\sigma-\varepsilon} = -(\beta - \alpha) (u_{\sigma, \varepsilon}^+, u_{\sigma, \varepsilon}^-)_\sigma,\\
&\beta(\beta^{2^*_\sigma-2-\varepsilon}-1)|u_{\sigma, \varepsilon}^-|^{2^*_\sigma-\varepsilon}_{2^*_\sigma-\varepsilon} = - \alpha(\alpha^{2^*_\sigma-2-\varepsilon}-1)|u_{\sigma, \varepsilon}^+|^{2^*_\sigma-\varepsilon}_{2^*_\sigma-\varepsilon}.
\end{aligned}
\]
Recalling that $(u_{\sigma, \varepsilon}^+, u_{\sigma, \varepsilon}^-)_\sigma < 0$ it is immediate to see that both $0<\alpha<1$ and  $\alpha >1$ lead to a contradiction. Hence $\alpha =1$, and as a consequence we obtain that $\beta = 1$. Finally, from this and since
\[
\|u_{\sigma,\varepsilon}-\varphi_j\|_\sigma \leq |\alpha_j - 1|\|\tilde \varphi_j^+\|_\sigma + |\beta_j -1|\|\tilde \varphi_j^-\|_\sigma + \|\tilde \varphi^+_j - u_{\sigma, \varepsilon}^+\|_\sigma + \|\tilde \varphi_j^- - u_{\sigma, \varepsilon}^-\|_\sigma, 
\]
we obtain that $\varphi_j \to u_{\sigma, \ep}$ in $X^\sigma_0(B_R)$, as $j \to +\infty$. At the end, the last point of Step 1 is a straightforward consequence of the strong convergence of $\varphi_j$ to $u_{\sigma, \ep}$, together with the fractional Sobolev embedding. The proof of Step 1 is complete. 
\vspace{2mm}
\begin{cla}
Let $(\varphi_j)_j \in C^\infty_c(B_R) \cap \mathcal{M}^r_{\sigma,\ep}(B_R)$ be the sequence given by Step 1. Let $(s_k)_k$ be a sequence such that $s_k \to \sigma$, as $k \to +\infty$. 
For every $j$ fixed, we claim that there exists a sequence $(\varphi_{j, k})_k \subset C^\infty_c(B_R)$ such that $\varphi_{j,k} \in \mathcal{M}_{s_k, \varepsilon}^r(B_R)$ for every $k$, and
\[
\|\varphi_{j,k}\|_{s_k} \to \|\varphi_j\|_\sigma, \quad I_{s_k, \varepsilon}(\varphi_{j,k}) \to I_{\sigma,\varepsilon}(\varphi_j), \ \  \hbox{as} \ k \to + \infty . 
\]
\end{cla}
\vspace{4mm}
Let us fix $j$ and let $\varphi_j$ be as in the statement. From \eqref{E:fracintbypart} and \cite[Lemma 2.4]{uniquenondeg}, as $k \to +\infty$ we have $\|\varphi_j^\pm\|_{s_k} \to \|\varphi_j^\pm\|_\sigma$ and $\|\varphi_j\|_{s_k} \to \|\varphi_j\|_\sigma$. This easily implies that $(\varphi_j^+, \varphi_j^-)_{s_k} \to (\varphi_j^+, \varphi_j^-)_\sigma$, while by a standard computation we get that $|\varphi_j^\pm|_{2^*_{s_k}-\varepsilon}^{2^*_{s_k}-\varepsilon} \to |\varphi_j^\pm|^{2^*_\sigma-\varepsilon}_{2^*_\sigma-\varepsilon}$. 

Let $\alpha_k=\alpha(j,k)>0$, $\beta_k=\beta(j,k)>0$ be such that $\alpha_k \varphi_j^+ - \beta_k \varphi_j^- \in \mathcal{M}^r_{s_k, \ep}(B_R)$ and define $\varphi_{j,k} := \alpha_k \varphi_j^+ - \beta_k \varphi_j^-$. Arguing as in Step $1$ we get that, up to a subsequence, $\alpha_k, \beta_k \to 1$ as $k \to + \infty$. 
This easily implies that $\|\varphi_{j,k}\|_{s_k} \to \|\varphi_j\|_\sigma$ and $|\varphi_{j,k}|^{2^*_{s_k}-\varepsilon}_{2^*_{s_k}-\varepsilon} \to |\varphi_j|^{2^*_\sigma-\varepsilon}_{2^*_\sigma-\varepsilon}$, as $k \to +\infty$. The proof of step 2 is complete.
\vspace{2mm}
\begin{cla}
Conclusion.
\end{cla}
\vspace{2mm}
Let $(s_k)_k \subset (0,1)$ be a sequence such that $s_k \to \sigma$. Let us fix a small number $\tau >0$. Thanks to Step 1, there exists a function $\varphi_{\tau} \in C^\infty_c(B_R) \cap \mathcal{M}^r_{\sigma, \varepsilon}(B_R)$ such that 
\[
|I_{\sigma, \varepsilon}(u_{\sigma, \varepsilon}) - I_{\sigma, \varepsilon}(\varphi_{\tau})| < \frac{\tau}{2}.
\]
On the other hand, thanks to Step 2 there exist $\hat k= \hat k(\tau)>0$ and a sequence of functions $(\varphi_k)_k$ such that $\varphi_{k} \in C^\infty_c(B_R) \cap \mathcal{M}^r_{s_k, \varepsilon}(B_R)$ and 
\[
|I_{s_k, \varepsilon}(\varphi_{k}) - I_{\sigma,\varepsilon}(\varphi_\tau)| < \frac{\tau}{2}, \quad \forall k \geq \hat k(\tau).
\] 
As a consequence, we get that 
\[
|I_{s_k, \varepsilon}(\varphi_{k})-I_{\sigma,\varepsilon}(u_{\sigma, \varepsilon})|< \tau, \quad \forall k \geq \hat k(\tau).
\]
Therefore, since $u_{\sigma,\varepsilon}$ is a minimizer and $\varphi_{k} \in \mathcal{M}^r_{s_k, \varepsilon}(B_R)$, we infer that for all $k\geq \hat k(\tau)$
\[
C_{\mathcal{M}^r(B_R)}(s_k, \varepsilon) \leq I_{s_k, \varepsilon}(\varphi_{k}) \leq C_{\mathcal{M}^r(B_R)}(\sigma, \varepsilon) + \tau.
\]
Taking the $\limsup$ as $k\to+ \infty$ we get that
\[
\limsup_{k\to +\infty}C_{\mathcal{M}^r(B_R)}(s_k, \varepsilon) \leq C_{\mathcal{M}^r(B_R)}(\sigma, \varepsilon) + \tau,
\]
and since $\tau>0$ is arbitrary we obtain the desired result. The proof is then complete.
\end{proof}

In the next result we prove a uniform bound with respect to $s$ for the $L^\infty$-norm of the solutions. 
\begin{lemma}\label{L:linfbound}
Let $0<s_0< s_1\leq 1$, $n >2s_1$ and $\varepsilon \in (0, \hat \varepsilon)$, where $\hat \varepsilon$ is given by Lemma \ref{L:nodalenerasynt}. Then there exists $C>0$, depending on $\varepsilon$ but not on $s$, such that
\[
C^{-1} \leq \sup_{s \in [s_0, s_1)}|u_{s, \varepsilon}|_\infty\leq C,
\]
for every least energy radial sign-changing solution $u_{s, \varepsilon} \in \mathcal{M}^r_{s, \varepsilon}(B_R)$ of \eqref{fracsubcrit}.
\end{lemma}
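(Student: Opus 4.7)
My plan is to split the statement into its two inequalities and attack them separately: the lower bound is direct and algebraic, while the upper bound will require a blow-up argument by contradiction.

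For the lower bound, I plan to exploit $u_{s,\varepsilon} \in \mathcal{M}^r_{s,\varepsilon}(B_R)$: testing the equation against $u_{s,\varepsilon}^{+}$ and using $(u_{s,\varepsilon}^{+},u_{s,\varepsilon}^{-})_s \le 0$ gives $\|u_{s,\varepsilon}^{+}\|_s^{2} \le |u_{s,\varepsilon}^{+}|_{2^*_s-\varepsilon}^{2^*_s-\varepsilon}$. Combining H\"older on $B_R$ with the fractional Sobolev embedding yields
\[
\|u_{s,\varepsilon}^{+}\|_s^{2} \le |B_R|^{\varepsilon/2^*_s}\, S_s^{-(2^*_s-\varepsilon)/2}\,\|u_{s,\varepsilon}^{+}\|_s^{2^*_s-\varepsilon},
\]
which, together with the positivity and continuity of $s \mapsto S_s$ on $[s_0,s_1]$, produces a uniform-in-$s$ lower bound on $\|u_{s,\varepsilon}^{+}\|_s$. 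The pointwise estimate $|u_{s,\varepsilon}^{+}|_{2^*_s-\varepsilon}^{2^*_s-\varepsilon} \le |u_{s,\varepsilon}^{+}|_\infty^{2^*_s-\varepsilon}|B_R|$ then converts this into a uniform lower bound on $|u_{s,\varepsilon}|_\infty \ge |u_{s,\varepsilon}^{+}|_\infty$.

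For the upper bound, I argue by contradiction: suppose there exist $s_k \in [s_0,s_1)$ with $M_k := |u_{s_k,\varepsilon}|_\infty \to +\infty$ and introduce the rescaling
\[
\tilde u_k(x) := \frac{1}{M_k}\,u_{s_k,\varepsilon}\!\left(\frac{x}{M_k^{\beta_k}}\right),\qquad \beta_k := \frac{2^*_{s_k}-2-\varepsilon}{2s_k},
\]
chosen so that $|\tilde u_k|_\infty = 1$ and $\tilde u_k$ weakly solves $(-\Delta)^{s_k}\tilde u_k = |\tilde u_k|^{2^*_{s_k}-2-\varepsilon}\tilde u_k$ on $B_{M_k^{\beta_k} R}$. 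The right-hand side being bounded by $1$ in $L^\infty$ and the rescaled radius growing to $+\infty$, Proposition \ref{P:unifholderineq} provides $\|\tilde u_k\|_{C^{0,s_k}(\R^n)} \le C$ with $C$ independent of $k$. Since $s_k \ge s_0$, this yields a uniform $C^{0,s_0}$-estimate on unit balls, so that at a maximum point $\tilde x_k$ where $|\tilde u_k(\tilde x_k)|=1$ I obtain $|\tilde u_k| \ge 1/2$ on $B_{r_0}(\tilde x_k)$ for some $r_0 >0$ independent of $k$, and hence $|\tilde u_k|_{2^*_{s_k}-\varepsilon}^{2^*_{s_k}-\varepsilon} \ge c_1 >0$ uniformly.

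The contradiction then comes from combining this with the Nehari identity and the scaling identities already used in Section~4: direct computation gives
\[
\|\tilde u_k\|_{s_k}^{2} = |\tilde u_k|_{2^*_{s_k}-\varepsilon}^{2^*_{s_k}-\varepsilon} = M_k^{-\varepsilon(n-2s_k)/(2s_k)}\,\|u_{s_k,\varepsilon}\|_{s_k}^{2},
\]
and the energy bound $\|u_{s_k,\varepsilon}\|_{s_k}^{2} \le C$ uniform in $k$ follows from Lemma \ref{L:nodalenerasynt} via the Nehari relation $\|u\|_s^{2} = \tfrac{2(2^*_s-\varepsilon)}{2^*_s-\varepsilon-2}\,C_{\mathcal{M}^r(B_R)}(s,\varepsilon)$. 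Since $\varepsilon(n-2s_k)/(2s_k)$ stays uniformly bounded below in the chosen range, the right-hand side vanishes along the sequence, contradicting the uniform lower bound $c_1$. The main obstacle is the bookkeeping of the varying Hölder exponent $s_k$: to turn the $C^{0,s_k}$-regularity into a usable uniform estimate one has to restrict to unit scales and invoke $s_k \ge s_0 > 0$, and this same lower bound $s_k \ge s_0$ is precisely what makes the scaling factor $M_k^{-\varepsilon(n-2s_k)/(2s_k)}$ decay along the sequence and closes the loop.
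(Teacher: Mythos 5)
Your proposal is correct and matches the paper's \emph{second} (``alternative'') proof of this lemma: the upper bound by contradiction via the $\varepsilon$-corrected rescaling $\beta_k = \frac{2^*_{s_k}-2-\varepsilon}{2s_k}$, the uniform H\"older estimate of Proposition \ref{P:unifholderineq}, the scaling identity showing $|\tilde u_k|_{2^*_{s_k}-\varepsilon}^{2^*_{s_k}-\varepsilon} = M_k^{-\varepsilon(n-2s_k)/(2s_k)}\|u_{s_k,\varepsilon}\|_{s_k}^2 \to 0$, and the boundedness of $\|u_{s_k,\varepsilon}\|_{s_k}^2$ from Lemma \ref{L:nodalenerasynt} via \eqref{E:passag20} are exactly the paper's ingredients. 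Two minor points: you get your contradiction a bit more directly than the paper, since the uniform $C^{0,s_0}$ bound near a max point gives $|\tilde u_k|_{2^*_{s_k}-\varepsilon}^{2^*_{s_k}-\varepsilon} \geq c_1 > 0$ without passing to a limit function and invoking Fatou; and your explicit H\"older/Sobolev chain for the lower bound is precisely the content the paper compresses into ``the first inequality is trivial''. (The paper also offers an independent first proof of the upper bound via the $L^\infty$--$L^{2^*_s}$ estimate of Iannizzotto--Mosconi--Squassina, which you do not use.)
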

\begin{proof}
Let us fix $s_0$, $s_1$, $n$ and $\ep$ as in the statement.
The first inequality is trivial. As for the second one, it can be proved in two different ways. Indeed, from \cite[Theorem 3.2]{IanMosSqu} there exists $M \in C(\R^+)$ such that 
\[
|u_{s, \varepsilon}|_\infty \leq M(|u_{s, \varepsilon}|_{2^*_s}).
\]
A careful analysis of the proof shows that the function $M$ can be chosen in such a way that $M$ depends only on $n, R, s_0$, $s_1$ and $\varepsilon$, but not on $s$. 
Since $u_{s, \varepsilon} \in \mathcal{M}_{s, \varepsilon}^r(B_R)\subset \mathcal{N}_{s, \varepsilon}(B_R)$ and $u_{s, \varepsilon}$ is a least energy sign-changing solution to \eqref{fracradsubcrit}, we infer that 
\begin{equation}\label{E:passag20}
C_{\mathcal{M}^r(B_R)}(s, \varepsilon) = \frac{2^*_s-2-\varepsilon}{2(2^*_s-\varepsilon)}\|u_{s, \varepsilon}\|^2_s.
\end{equation}
Thus, thanks to the fractional Sobolev embedding and Lemma \ref{L:nodalenerasynt} we deduce that $|u_{s, \varepsilon}|_{2^*_s} \leq C_1$, for some constant $C_1>0$ independent on $s$. Similarly, using that $2C_{\mathcal{N}(B_R)}(s, \varepsilon) \leq C_{\mathcal{M}^r(B_R)}(s, \varepsilon)$ and Lemma \ref{AsintPos} we obtain that $|u_{s, \varepsilon}|_{2^*_s} \geq C_0>0$, where $C_0$ does not depend on $s$, and the desired result easily follows.

Alternatively, we can argue as follows: fix $s_0$, $s_1$, $n$ and $\ep$ as in the statement. Since $u_{s, \ep}$ is a least energy sign-changing solution to \eqref{fracradsubcrit} with $u_{s, \ep}\in \mathcal{M}_{s, \varepsilon}^r(B_R)\subset \mathcal{N}_{s, \varepsilon}(B_R)$, and since Lemma \ref{L:nodalenerasynt} holds, by \eqref{E:passag20} we get that the quantity $|u_{s, \varepsilon}|^{2^*_{s}-\varepsilon}_{2^*_{s}-\varepsilon}$ is uniformly bounded with respect to $s \in [s_0, s_1)$. Now, suppose by contradiction that there exists a sequence $(s_k)_k \subset [s_0, s_1)$ and a sequence $(u_{s_k, \varepsilon})_k$ such that $\delta_{s_k} := |u_{s_k, \varepsilon}|_{\infty}\to +\infty$, as $k\to +\infty$. Up to a subsequence, $s_k \to \sigma \in [s_0, s_1]$, as $k\to +\infty$. Let us consider the rescaled functions
\[
v_k(x) := \frac{1}{\delta_{s_k}}u_{s_k, \varepsilon}\left(\frac{x}{\delta_{s_k}^{\beta_{s_k}}}\right), \quad x \in  \R^n
\] 
where $\beta_{s_k}:= \frac{2}{n-{2s_k}}$. We recall that 
\[
\|v_k\|_{s_k}^2 = \|u_{s_k, \varepsilon}\|_{s_k}^2, \quad \quad |v_k|^{2^*_{s_k}-\varepsilon}_{2^*_{s_k}-\varepsilon}=\left(\delta_{s_k}\right)^{-\ep \left(\frac{n-2s_k}{2s_k}\right)} | u_{s_k, \varepsilon}|^{2^*_{s_k}-\varepsilon}_{2^*_{s_k}-\varepsilon}, 
\]
and $v_k$ weakly satisfies 
\begin{equation}\label{E:rescalprob}
\begin{cases}
(-\Delta)^{s_k}v_k = \frac{1}{\delta_{s_k}^{\varepsilon}}|v_k|^{2^*_s-2-\varepsilon}v_k & \text{in }B_{\delta^{\beta_{s_k}}_s},\\
v_k = 0 & \text{in }B_{\delta_s^{\beta_{s_k}}}.
\end{cases}
\end{equation}
Arguing exactly as in Lemma \ref{L:maxpointbehavior} we see that $v_k \to v$ in $C^{0,\alpha}_{loc}(\R^n)$, for some $\alpha \in (0, s_0)$, where $v \not \equiv 0$. On the other hand, by Fatou's Lemma we have 
$$  |v|^{2^*_{\sigma}-\varepsilon}_{2^*_{\sigma}-\varepsilon} \leq \liminf_{k\to +\infty} |v_k|^{2^*_{s_k}-\varepsilon}_{2^*_{s_k}-\varepsilon}= \liminf_{k\to +\infty} \left(\delta_{s_k}\right)^{-\ep \left(\frac{n-2s_k}{2s_k}\right)} | u_{s_k, \varepsilon}|^{2^*_{s_k}-\varepsilon}_{2^*_{s_k}-\varepsilon}=0,$$
because $\delta_{s_k}\to +\infty$ and $|u_{s_k, \varepsilon}|^{2^*_{s_k}-\varepsilon}_{2^*_{s_k}-\varepsilon}$ is bounded. Hence $v\equiv 0$ and we get a contradiction. The proof is complete.
\end{proof}

In the next result we study the asymptotic behavior of the solutions as $s$ goes to some limit value.

\begin{lemma}\label{L:convstoone}
Let $0<s_0<s_1\leq 1$, $n >2s_1$ and $\varepsilon \in (0,\hat \varepsilon)$, where $\hat \varepsilon$ is given by Lemma \ref{L:nodalenerasynt}. Let $s_k \to \sigma$, where $\sigma \in [s_0,s_1]$, and let $(u_{s_k, \varepsilon})_k$ be a sequence of least energy nodal radial solution to \eqref{fracradsubcrit}. 
Then 
\[
u_{s_k, \varepsilon}\to u_{\sigma, \varepsilon} \text{ in }C^{0,s_0}_{loc}(\R^n), 
\]
where $u_{\sigma, \varepsilon} \in \mathcal{M}^r_{\sigma, \varepsilon}(B_R)$ 
weakly satisfies
\[
\begin{cases}
(-\Delta)^s u_{\sigma, \varepsilon} = |u_{\sigma, \varepsilon}|^{2^*_\sigma-2-\varepsilon}u_{\sigma, \varepsilon} & \text{ in }B_R, \\
u_{\sigma, \varepsilon} = 0 &\text{ in }\R^n \setminus B_R.
\end{cases}
\]
In addition, it holds that
\[
\lim_{s_k \to \sigma}I_{s_k, \varepsilon}(u_{s_k, \varepsilon}) = I_{\sigma, \varepsilon}(u_{\sigma, \varepsilon}).
\]
\end{lemma}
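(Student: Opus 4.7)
The plan is to extract a uniformly convergent subsequence of $(u_{s_k, \varepsilon})_k$ via a uniform Hölder estimate, identify the pointwise limit $u_{\sigma, \varepsilon}$ as a weak solution of the limit problem by passing to the limit in the weak formulation, and finally upgrade to minimality of $u_{\sigma, \varepsilon}$ and energy convergence by combining the Nehari identity with the upper semi-continuity established in Lemma \ref{uppersemiconts}.

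First, by Lemma \ref{L:linfbound} the sequence $(|u_{s_k, \varepsilon}|_\infty)_k$ is uniformly bounded, so the right-hand sides $g_k := |u_{s_k, \varepsilon}|^{2^*_{s_k}-2-\varepsilon} u_{s_k, \varepsilon}$ are uniformly bounded in $L^\infty(B_R)$. Proposition \ref{P:unifholderineq} then yields a uniform bound on $\|u_{s_k, \varepsilon}\|_{C^{0,s_k}(\R^n)}$ which, since $s_k \geq s_0$, descends to a uniform $C^{0,s_0}$-bound on every compact set. A standard Arzelà--Ascoli plus diagonal argument extracts a subsequence converging in $C^{0,\alpha}_{loc}(\R^n)$ for every $\alpha \in (0, s_0)$ to some radial function $u_{\sigma, \varepsilon} \in C^{0,s_0}(\R^n)$ vanishing outside $\overline{B_R}$.

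For every $\varphi \in C_c^\infty(B_R)$ I rewrite $(u_{s_k, \varepsilon}, \varphi)_{s_k} = \int_{\R^n} u_{s_k, \varepsilon}\,(-\Delta)^{s_k}\varphi \de x$. Using the Fourier representation (and the standard limiting result $(-\Delta)^{s_k} \varphi \to -\Delta \varphi$ when $\sigma = 1$) one has the pointwise convergence $(-\Delta)^{s_k}\varphi \to (-\Delta)^{\sigma}\varphi$ together with the uniform decay $|(-\Delta)^{s_k}\varphi(x)| \leq C(\varphi)(1+|x|)^{-n-2s_0}$; dominated convergence and the uniform $L^\infty$ bound on $u_{s_k, \varepsilon}$ (all supported in $\overline{B_R}$) then give $(u_{s_k, \varepsilon}, \varphi)_{s_k} \to \int_{\R^n} u_{\sigma, \varepsilon}\,(-\Delta)^{\sigma}\varphi \de x$. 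On the right-hand side, uniform local convergence and the $L^\infty$ bound yield $\int |u_{s_k, \varepsilon}|^{2^*_{s_k}-2-\varepsilon} u_{s_k, \varepsilon}\varphi \de x \to \int |u_{\sigma, \varepsilon}|^{2^*_\sigma-2-\varepsilon} u_{\sigma, \varepsilon}\varphi \de x$. Thus $u_{\sigma, \varepsilon}$ solves the limit problem weakly.

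To see that $u_{\sigma, \varepsilon} \in \mathcal{M}^r_{\sigma, \varepsilon}(B_R)$, the Nehari identity $\|u_{s_k, \varepsilon}^\pm\|^2_{s_k} = |u_{s_k, \varepsilon}^\pm|^{2^*_{s_k}-\varepsilon}_{2^*_{s_k}-\varepsilon} + (u_{s_k, \varepsilon}^+, u_{s_k, \varepsilon}^-)_{s_k}$, together with $(u_{s_k, \varepsilon}^+, u_{s_k, \varepsilon}^-)_{s_k} \leq 0$ and the fractional Sobolev embedding, yield a uniform lower bound $|u_{s_k, \varepsilon}^\pm|^{2^*_{s_k}-\varepsilon}_{2^*_{s_k}-\varepsilon} \geq c > 0$; dominated convergence on $\overline{B_R}$ then passes this bound to $|u_{\sigma, \varepsilon}^\pm|^{2^*_\sigma-\varepsilon}_{2^*_\sigma-\varepsilon} > 0$, so $u_{\sigma, \varepsilon}^\pm \not\equiv 0$. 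Writing the energy in Nehari form,
\begin{equation*}
I_{s_k, \varepsilon}(u_{s_k, \varepsilon}) = \frac{2^*_{s_k}-2-\varepsilon}{2(2^*_{s_k}-\varepsilon)}\, |u_{s_k, \varepsilon}|^{2^*_{s_k}-\varepsilon}_{2^*_{s_k}-\varepsilon} \longrightarrow \frac{2^*_\sigma-2-\varepsilon}{2(2^*_\sigma-\varepsilon)}\, |u_{\sigma, \varepsilon}|^{2^*_\sigma-\varepsilon}_{2^*_\sigma-\varepsilon} = I_{\sigma, \varepsilon}(u_{\sigma, \varepsilon}),
\end{equation*}
and combining with $I_{s_k, \varepsilon}(u_{s_k, \varepsilon}) = C_{\mathcal{M}^r(B_R)}(s_k, \varepsilon)$ and the upper semi-continuity of Lemma \ref{uppersemiconts}, one obtains $I_{\sigma, \varepsilon}(u_{\sigma, \varepsilon}) \leq C_{\mathcal{M}^r(B_R)}(\sigma, \varepsilon)$; together with $u_{\sigma, \varepsilon} \in \mathcal{M}^r_{\sigma, \varepsilon}(B_R)$ this forces equality and hence minimality. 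The main technical hurdle will be the passage $(-\Delta)^{s_k}\varphi \to -\Delta\varphi$ in the case $\sigma = 1$, requiring known convergence results for the fractional Laplacian to the classical one as $s \to 1^-$.
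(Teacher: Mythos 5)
Your approach is the right one and essentially coincides with the one the paper invokes (by reference to \cite[Theorem 6.7]{CorIac}): a uniform H\"older bound from Lemma \ref{L:linfbound} and Proposition \ref{P:unifholderineq}, Arzel\`a--Ascoli, passage to the limit in the weak formulation via \eqref{E:fracintbypart}, a lower bound keeping $u^\pm_{\sigma,\varepsilon}\not\equiv 0$, and the Nehari form of the energy. However, there is a gap that needs to be filled before the limit object can legitimately be declared an element of $\mathcal{M}^r_{\sigma,\varepsilon}(B_R)$ or a weak solution in the functional sense.

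You never establish that $u_{\sigma,\varepsilon}\in X^\sigma_0(B_R)$, i.e.\ that the limit has finite $\|\cdot\|_\sigma$-seminorm. The uniform $C^{0,s_0}$-bound and the fact that $u_{\sigma,\varepsilon}$ vanishes outside $\overline{B_R}$ do \emph{not} imply $u_{\sigma,\varepsilon}\in H^\sigma(\R^n)$ when $\sigma>s_0$: a compactly supported $C^{0,\alpha}$ function only lies in $H^\beta$ for $\beta<\alpha$. This membership is required (i) to upgrade the identity $\int u_{\sigma,\varepsilon}(-\Delta)^\sigma\varphi\,\de x=\int |u_{\sigma,\varepsilon}|^{2^*_\sigma-2-\varepsilon}u_{\sigma,\varepsilon}\varphi\,\de x$, valid for $\varphi\in C^\infty_c(B_R)$, to the weak $X^\sigma_0$-formulation via \eqref{E:fracintbypart}; (ii) to test the resulting equation against $u^\pm_{\sigma,\varepsilon}$ and thereby obtain the Nehari constraints defining $\mathcal{M}^r_{\sigma,\varepsilon}(B_R)$; and (iii) to write $I_{\sigma,\varepsilon}(u_{\sigma,\varepsilon})=\frac{2^*_\sigma-2-\varepsilon}{2(2^*_\sigma-\varepsilon)}|u_{\sigma,\varepsilon}|^{2^*_\sigma-\varepsilon}_{2^*_\sigma-\varepsilon}$, which your energy-convergence step tacitly invokes. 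The remedy is the Fourier--Fatou argument the paper already uses in Lemma \ref{L:nozeroorigin}: since all the $u_{s_k,\varepsilon}$ share the fixed compact support $\overline{B_R}$ and converge uniformly there, they converge in $L^1\cap L^2(B_R)$, so $\hat u_{s_k,\varepsilon}\to\hat u_{\sigma,\varepsilon}$ pointwise, and Fatou's lemma gives
\[
\|u_{\sigma,\varepsilon}\|^2_\sigma=\int_{\R^n}|\xi|^{2\sigma}|\hat u_{\sigma,\varepsilon}|^2\,\de\xi\;\leq\;\liminf_{k\to\infty}\int_{\R^n}|\xi|^{2s_k}|\hat u_{s_k,\varepsilon}|^2\,\de\xi=\liminf_{k\to\infty}\|u_{s_k,\varepsilon}\|^2_{s_k}<\infty,
\]
the right-hand side being bounded by Lemma \ref{L:nodalenerasynt}. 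Unlike the rescaled setting of Lemma \ref{L:nozeroorigin}, this works uniformly including $\sigma=1$ precisely because the supports are fixed. Once this is inserted, the rest of your argument goes through; note also that your final paragraph proves more than the lemma states, since the identification $I_{\sigma,\varepsilon}(u_{\sigma,\varepsilon})=C_{\mathcal{M}^r(B_R)}(\sigma,\varepsilon)$ is the content of Proposition \ref{P:contenermap}, not of Lemma \ref{L:convstoone} itself.
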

\begin{proof}
It suffices to argue as in \cite[Theorem 6.7]{CorIac}, taking into account Lemma \ref{L:nodalenerasynt} and Lemma \ref{L:linfbound}.
\end{proof}
As a corollary of the previous results we obtain the continuity of the map $s \mapsto C_{\mathcal{M}^r(B_R)}(s, \varepsilon)$.
\begin{proposition}\label{P:contenermap}
Let $0<s_0<s_1 \leq1$, $n >2s_1$ and let $\varepsilon \in (0,\check \varepsilon)$, where $\check \varepsilon$ is given by Lemma \ref{L:nodalchar}. Let $(s_k)_k \subset [s_0, s_1)$, $\sigma \in [s_0, s_1]$, $(u_{s_k, \varepsilon})_k$ and $u_{\sigma, \varepsilon}$ be as is Lemma \ref{L:convstoone}. Then $u_{\sigma, \varepsilon}$ is a least energy solution, that is, $I_{\sigma, \varepsilon}(u_{\sigma, \varepsilon}) = C_{\mathcal{M}^r(B_R)}(\sigma, \varepsilon)$.

In particular, for any $\varepsilon \in (0,\check \varepsilon)$ the map from $[s_0,s_1]$ to $\R$, defined by $s \mapsto C_{\mathcal{M}^r(B_R)}(s, \varepsilon)$, is continuous.  
\end{proposition}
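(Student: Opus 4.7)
The plan is to combine the upper semi-continuity from Lemma \ref{uppersemiconts} with the energy-passage-to-the-limit furnished by Lemma \ref{L:convstoone}, and then bootstrap to the continuity of $s \mapsto C_{\mathcal{M}^r(B_R)}(s,\varepsilon)$ by a sub-subsequence argument.

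First I would prove that $u_{\sigma,\varepsilon}$ is itself a least energy nodal solution at $\sigma$. By Lemma \ref{L:convstoone} the limit $u_{\sigma,\varepsilon}$ lies in $\mathcal{M}^r_{\sigma,\varepsilon}(B_R)$, so the definition of the infimum gives automatically
\[
C_{\mathcal{M}^r(B_R)}(\sigma,\varepsilon) \le I_{\sigma,\varepsilon}(u_{\sigma,\varepsilon}).
\]
For the opposite inequality I would chain the convergence $I_{s_k,\varepsilon}(u_{s_k,\varepsilon}) \to I_{\sigma,\varepsilon}(u_{\sigma,\varepsilon})$ from Lemma \ref{L:convstoone} with the minimizing identity $I_{s_k,\varepsilon}(u_{s_k,\varepsilon})=C_{\mathcal{M}^r(B_R)}(s_k,\varepsilon)$ and with Lemma \ref{uppersemiconts}, obtaining
\[
I_{\sigma,\varepsilon}(u_{\sigma,\varepsilon}) = \lim_{k\to\infty} C_{\mathcal{M}^r(B_R)}(s_k,\varepsilon) \le \limsup_{k\to\infty} C_{\mathcal{M}^r(B_R)}(s_k,\varepsilon) \le C_{\mathcal{M}^r(B_R)}(\sigma,\varepsilon).
\]
Combining the two inequalities proves the first assertion. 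Note that the range $(0,\check\varepsilon)$ is contained both in the range $(0,\hat\varepsilon)$ required by Lemma \ref{L:convstoone} and in the range required by Lemma \ref{uppersemiconts}, so every hypothesis is met.

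For the continuity statement, fix $\sigma\in[s_0,s_1]$ and let $(t_k)_k\subset[s_0,s_1]$ be any sequence with $t_k\to\sigma$. For each $k$ select a least energy nodal radial solution $u_{t_k,\varepsilon}$, so that $C_{\mathcal{M}^r(B_R)}(t_k,\varepsilon)=I_{t_k,\varepsilon}(u_{t_k,\varepsilon})$. I would argue by the sub-subsequence principle: given any subsequence of $(t_k)_k$, Lemma \ref{L:linfbound} provides the uniform $L^\infty$-bounds needed to invoke Lemma \ref{L:convstoone}, which produces a further subsequence along which $u_{t_{k_j},\varepsilon}\to u^\star$ in $C^{0,s_0}_{loc}(\R^n)$ with $u^\star\in\mathcal{M}^r_{\sigma,\varepsilon}(B_R)$ and $I_{t_{k_j},\varepsilon}(u_{t_{k_j},\varepsilon})\to I_{\sigma,\varepsilon}(u^\star)$. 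The first part of the proposition, applied along this subsequence, yields $I_{\sigma,\varepsilon}(u^\star)=C_{\mathcal{M}^r(B_R)}(\sigma,\varepsilon)$. Since every subsequence of $\bigl(C_{\mathcal{M}^r(B_R)}(t_k,\varepsilon)\bigr)_k$ admits a further subsequence converging to the common value $C_{\mathcal{M}^r(B_R)}(\sigma,\varepsilon)$, the whole sequence converges to that value, which is precisely the continuity at $\sigma$.

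The main obstacle is conceptual rather than computational: the limit object $u^\star$ produced by Lemma \ref{L:convstoone} is not canonical (it may depend on the chosen minimizers and on the extracted subsequence), so convergence of the energy levels is not immediate. The sub-subsequence device sidesteps this non-uniqueness by showing that, irrespective of these choices, every cluster point of $\bigl(C_{\mathcal{M}^r(B_R)}(t_k,\varepsilon)\bigr)_k$ is trapped between the admissibility lower bound and the Lemma \ref{uppersemiconts} upper bound, forcing a unique accumulation value equal to $C_{\mathcal{M}^r(B_R)}(\sigma,\varepsilon)$. The other small verification is simply that $(0,\check\varepsilon)$ lies inside the domains of applicability of all three lemmas used, which is immediate from the definitions of $\hat\varepsilon$, $\tilde\varepsilon$, $\overline\varepsilon$, and $\check\varepsilon$.
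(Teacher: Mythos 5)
Your proposal is correct and is essentially the same as the paper's proof: both establish $I_{\sigma,\varepsilon}(u_{\sigma,\varepsilon})=C_{\mathcal{M}^r(B_R)}(\sigma,\varepsilon)$ by squeezing $C_{\mathcal{M}^r(B_R)}(\sigma,\varepsilon)\le I_{\sigma,\varepsilon}(u_{\sigma,\varepsilon})=\lim_k C_{\mathcal{M}^r(B_R)}(s_k,\varepsilon)\le C_{\mathcal{M}^r(B_R)}(\sigma,\varepsilon)$ using admissibility, Lemma~\ref{L:convstoone}, and Lemma~\ref{uppersemiconts}. The only difference is that you spell out the sub-subsequence device to upgrade the subsequential convergence produced by Lemma~\ref{L:convstoone} into actual continuity of $s\mapsto C_{\mathcal{M}^r(B_R)}(s,\varepsilon)$, a step the paper leaves implicit.
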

\begin{proof}
Fixing $s_0$, $s_1$, $n$, $\ep$ as in the statement, applying both Lemma \ref{uppersemiconts} and Lemma \ref{L:convstoone}, since $0<\check \varepsilon < \hat \varepsilon$, we infer that
\[
C_{\mathcal{M}^r(B_R)}(\sigma, \varepsilon) \leq I_{\sigma, \varepsilon}(u_{\sigma, \varepsilon}) = \lim_{s \to \sigma}I_{s, \varepsilon}(u_{s, \varepsilon}) = \lim_{s \to \sigma }C_{\mathcal{M}^r(B_R)}(s, \varepsilon) \leq C_{\mathcal{M}^r(B_R)}(\sigma, \varepsilon),
\]
which implies both stated results. 
\end{proof}

The following Lemma grants that every least energy nodal radial solution in a ball changes sign exactly once, when $s$ is close to one. 
 
\begin{lemma}\label{L:oncenears}
Let $s_0 \in (0, 1)$ and $n\geq 3$. There exists $\varepsilon_0>0$ such that, for any $\varepsilon \in (0,\varepsilon_0)$, there exists $\bar s=\bar s(\varepsilon) \in (0,1)$ such that for any $s \in (\bar s, 1)$ any least energy radial sign-changing solution $u_{s, \varepsilon}$ to \eqref{fracradsubcrit} changes sign exactly once. 
\end{lemma}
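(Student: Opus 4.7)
The plan is to argue by contradiction, leveraging the convergence $s\to 1^-$ together with the classical characterization of least-energy radial nodal solutions to the local problem in a ball. Suppose there exist $\varepsilon>0$ small and a sequence $s_k\to 1^-$ such that $u_{s_k,\varepsilon}$ is a least-energy radial nodal solution that changes sign more than once. By Lemma~\ref{changebound}, along a subsequence $u_{s_k,\varepsilon}$ changes sign exactly twice, at nodal radii $r^1_k<r^2_k$; we may assume $u_{s_k,\varepsilon}(0)>0$. By Lemma~\ref{L:convstoone} and Proposition~\ref{P:contenermap}, extracting a further subsequence, $u_{s_k,\varepsilon}\to u_{1,\varepsilon}$ in $C^{0,s_0}_{loc}(\R^n)$, where $u_{1,\varepsilon}$ is a least-energy radial sign-changing solution of the local problem $-\Delta u=|u|^{2^*-2-\varepsilon}u$ in $B_R$ with zero Dirichlet data. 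As recalled in the introduction (see \cite{BEP2,BEP3}), in the local case each nodal component of a least-energy solution carries energy $\approx S_1^{n/2}$, so choosing $\varepsilon_0>0$ small enough, $u_{1,\varepsilon}$ changes sign exactly once at some $r^*\in(0,R)$, with $u_{1,\varepsilon}>0$ on $[0,r^*)$ and $u_{1,\varepsilon}<0$ on $(r^*,R)$; classical regularity plus Hopf's lemma give $u_{1,\varepsilon}\in C^{2,\alpha}(\overline{B_R})$ and $\partial_\nu u_{1,\varepsilon}(R)>0$.

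After extracting subsequences, $r^1_k\to r^1$ and $r^2_k\to r^2$ with $0\le r^1\le r^2\le R$. I plan to exclude all four possible configurations. If $r^1=0$, the uniform H\"older estimate of Proposition~\ref{P:unifholderineq} gives $|u_{s_k,\varepsilon}(0)|=|u_{s_k,\varepsilon}(0)-u_{s_k,\varepsilon}(r^1_k)|\le C(r^1_k)^{s_0}\to 0$, contradicting Lemma~\ref{L:nozeroorigin}. If $0<r^1=r^2<R$, the annular support of $u^-_{s_k,\varepsilon}$ collapses to a sphere and, combining Lemma~\ref{L:linfbound} with Proposition~\ref{P:unifholderineq}, we deduce $u^-_{s_k,\varepsilon}\to 0$ uniformly on $\R^n$; hence $u_{1,\varepsilon}^-\equiv 0$, contradicting $u_{1,\varepsilon}\in\mathcal{M}^r_{1,\varepsilon}(B_R)$. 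If $0<r^1<r^2<R$, passing to the limit in $u_{s_k,\varepsilon}(r^i_k)=0$ yields two distinct interior zeroes for $u_{1,\varepsilon}$, contradicting the one-sign-change property established above.

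The remaining configuration, $0<r^1<r^2=R$, in which the outer positive annulus of $u_{s_k,\varepsilon}$ collapses onto $\partial B_R$, is the main obstacle. My plan to rule it out is a boundary-asymptotics argument: classical Hopf's lemma gives $u_{1,\varepsilon}(x)/\mathrm{dist}(x,\partial B_R)\to -\partial_\nu u_{1,\varepsilon}(R)<0$ uniformly as $x\to\partial B_R$, while fractional boundary regularity results in the spirit of \cite{RosSer} yield uniform (in $s$) H\"older control of $u_{s_k,\varepsilon}/d^{s_k}$ up to the boundary; combining this with the $C^{0,s_0}_{loc}$ interior convergence, the boundary quotients for $u_{s_k,\varepsilon}$ converge to the strictly negative classical boundary quotient. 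This contradicts the positivity of $u_{s_k,\varepsilon}$ on the shrinking outer annulus $r^2_k<|x|<R$, completing the proof. The hardest part is transferring the classical Hopf boundary estimate to the fractional regime uniformly in $s\in[s_0,1)$, which is where the bulk of the technical work lies; an alternative route would be to truncate $u_{s_k,\varepsilon}$ at the second node and, using the shrinking measure of the outer region together with the uniform H\"older bound, to produce a two-region competitor on the nodal Nehari set with strictly smaller energy, but carrying out this strict-inequality step appears to require essentially the same boundary information.
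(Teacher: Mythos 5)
Your high-level structure — argue by contradiction, use Lemma~\ref{changebound} to reduce to solutions changing sign exactly twice, invoke Lemma~\ref{L:convstoone} and Proposition~\ref{P:contenermap} to pass to the limit $u_{s_k,\varepsilon}\to u_{1,\varepsilon}$, and use the classical classification from \cite{BEP3} — coincides with the paper's. The paper delegates the persistence of the sign-change count under $s\to 1^-$ to \cite[Theorem 1.3]{CorIac}; your plan to make this explicit by excluding degeneration of the nodal radii is natural, and your treatment of the first three configurations ($r^1\to 0$, $r^1=r^2$ interior, and $0<r^1<r^2<R$) is sound.

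The genuine gap is in the boundary case $r^2_k\to R$. You invoke ``uniform (in $s$) H\"older control of $u_{s_k,\varepsilon}/d^{s_k}$ up to the boundary'' obtained from results ``in the spirit of \cite{RosSer}.'' This is precisely what is \emph{not} available as $s_k\to 1^-$: \cite[Theorem 1.2]{RosSer} gives $u/\delta^s\in C^\alpha(\overline{B_R})$ only for $\alpha<\min\{s,1-s\}$, so the admissible exponent degenerates to $0$ as $s\to 1^-$ and the constant blows up. The paper itself flags this in the proof of Theorem~\ref{mainteo2}, noting that the corresponding constant ``is uniform for $s\in[s_0,s_1]$ because $s_1$ is strictly less than one.'' Hence the quotient estimate cannot be extended uniformly to a left-neighborhood of $s=1$, and the convergence of boundary quotients your argument needs does not follow from what you cite. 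Note also that the boundary case is not a formality: if $r^2_k\to R$ and $r^1_k\to r^1\in(0,R)$, the $C^{0,\alpha}_{loc}$ limit $u_{1,\varepsilon}$ already has the correct profile (one sign change), so there is no contradiction from the interior classification alone — one genuinely needs boundary information (a Pohozaev/flux-type estimate, as the paper uses for $s_1<1$ in Theorem~\ref{mainteo2}), and making that uniform as $s\to 1^-$ is exactly the step the paper pushes into \cite[Theorem 1.3]{CorIac} and your proposal leaves open.
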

\begin{proof}
We begin by recalling that, in the local case, when $n \geq 3$ there exists $\varepsilon_1 >0$ such that, for every $\varepsilon \in (0, \varepsilon_1)$, least energy radial sing-changing solutions to
\begin{equation}\label{E:locprob}
\begin{cases}
-\Delta u = |u|^{2^*_1-2-\varepsilon}u & \text{ in }B_R,\\
u = 0 & \text{ in }\R^n \setminus B_R,
\end{cases}
\end{equation}
change sign exactly once (see e.g. \cite{BEP3}). Now, let us fix $s_0 \in (0,1)$ and define $ \varepsilon_0 : = \min \{\check \varepsilon, \varepsilon_1\}$, where $\check \varepsilon$ is given by Lemma \ref{L:nodalchar} for $s_0$ and $s_1=1$.
Let us fix $\varepsilon \in (0, \varepsilon_0)$ and assume by contradiction that there exist $(s_k)_k \subset [s_0, 1)$ such that $s_k \to 1^-$ and a sequence $(u_{s_k, \varepsilon})_k$ of least energy radial sign-changing solutions in $B_R$ which change sign exactly twice for any $k$ (these functions change sign at most twice in view of Lemma \ref{L:nodalchar}). Then, by Proposition \ref{P:contenermap} we have that $u_{s_k, \varepsilon} \to u_{1, \varepsilon}$ in $C^{0,\alpha}_{loc}(\R^n)$, for some $\alpha \in (0,s_0)$, and that $u_{1, \varepsilon}$ is a least energy sing-changing solution to \eqref{E:locprob}. In particular, in view of our choice of $\ep$, $u_{1, \varepsilon}$ changes sign exactly once.

On the other hand, arguing as in the proof of \cite[Theorem 1.3]{CorIac}, we infer that the number of sign changes is preserved when passing to the limit as $s \to 1^-$ and thus $u_{1, \varepsilon}$ has to change sign twice. This gives a contradiction and concludes the proof. 
\end{proof}

Finally, we can state and prove Theorem \ref{mainteo2}. We first recall that, when speaking of a $L^2(B_R)$-continuous family $\mathcal{A} = \{v_{s, \varepsilon}\}_{s \in [s_0, 1)}$ of least energy nodal radial solutions to Problem \eqref{fracradsubcrit}, we mean a map $\Phi:[s_0,1) \to L^2(B_R)$ such that $\Phi$ is continuous and  $\Phi(s)=v_{s, \varepsilon} \in \mathcal{M}^r_{s,\ep}(B_R)$ is a least energy radial sign-changing solution  to Problem \eqref{fracradsubcrit} for any $s \in [s_0,1)$.

\begin{theorem}\label{mainteo2}
Let $s_0 \in (0, 1)$ and $n \geq 3$. There exists $\ep_0>0$ such that, for any $\ep \in (0, \ep_0)$, if there exists a $L^2(B_R)$-continuous family $\mathcal{A} = \{v_{s, \varepsilon}\}_{s \in [s_0, 1)}$ of least energy nodal radial solutions to Problem \eqref{fracradsubcrit}, then every element of the family changes sign exactly once. 
\end{theorem}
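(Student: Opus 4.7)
The plan is to fix $\varepsilon_0 := \min\{\check\varepsilon, \varepsilon_1\}$, where $\check\varepsilon$ is given by Lemma \ref{L:nodalchar} (so each $v_{s,\varepsilon}$ changes sign at most twice and vanishes only at its nodes) and $\varepsilon_1$ makes Lemma \ref{L:oncenears} applicable. For $\varepsilon \in (0,\varepsilon_0)$, partition $[s_0,1) = A_1 \sqcup A_2$, where $A_i := \{s \in [s_0,1) : v_{s,\varepsilon}\text{ changes sign exactly }i\text{ times}\}$. Since Lemma \ref{L:oncenears} gives $(\bar s, 1) \subset A_1$ for some $\bar s<1$, and $[s_0,1)$ is connected, it suffices to prove that $A_2$ is both open and closed in $[s_0,1)$, whence $A_2 = \emptyset$.

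The first tool is to upgrade the given $L^2(B_R)$-continuity of $\mathcal{A}$. Combining it with the uniform $L^\infty$ bound of Lemma \ref{L:linfbound}, the uniform Hölder regularity of Proposition \ref{P:unifholderineq}, and its boundary counterpart from \cite[Theorem 1.2]{RosSer}, one obtains that $s \mapsto v_{s,\varepsilon}$ is continuous into $C^{0,\alpha}(\overline{B_R})$ for some $\alpha \in (0,s_0)$, and moreover that the ratios $v_{s,\varepsilon}/\delta^{s}$ (with $\delta := \mathrm{dist}(\cdot,\partial B_R)$) are uniformly bounded in $C^{0,\alpha}(\overline{B_R})$ and depend continuously on $s$ up to $\partial B_R$. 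From here, openness of $A_2$ is immediate: if $\sigma \in A_2$, by Lemma \ref{L:nodalchar} the three nodal intervals of $v_{\sigma,\varepsilon}$ carry definite sign on any compact subinterval, and the above uniform convergence transfers these three sign regions to $v_{s,\varepsilon}$ for $s$ close to $\sigma$, giving at least two sign changes; combined with Lemma \ref{changebound} this is exactly two.

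For closedness of $A_2$, take $s_k \in A_2$ with $s_k \to \sigma$ and nodal radii $0 < r^1_k < r^2_k < R$. Up to a subsequence $r^i_k \to r^i_\ast \in [0,R]$ and $v_{\sigma,\varepsilon}(r^i_\ast)=0$. Three degenerate configurations must be excluded. First, $r^1_\ast = 0$ is ruled out by the uniform lower bound $|v_{s,\varepsilon}(0)| \geq C > 0$ from Lemma \ref{L:nozeroorigin}. Second, if $r^1_\ast = r^2_\ast$ (collapsing middle region), one combines the Nehari identity $\|v^-_{s_k,\varepsilon}\|_{s_k}^{2} \leq |v^-_{s_k,\varepsilon}|_p^{p}$ (with $p = 2^*_{s_k}-\varepsilon$), the fractional Sobolev embedding, and the Hölder interpolation $|v^-|_p^p \leq |v^-|_{2^*_{s_k}}^p\,|\mathrm{supp}(v^-)|^{\varepsilon/2^*_{s_k}}$ to force $|v^-_{s_k,\varepsilon}|_{2^*_{s_k}} \to +\infty$; this clashes with $|v^-_{s_k,\varepsilon}|_{2^*_{s_k}} \leq |v^-|_\infty\,|\mathrm{supp}(v^-)|^{1/2^*_{s_k}} \to 0$, which follows from Lemma \ref{L:linfbound} and the vanishing middle annulus. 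Third, $r^2_\ast = R$ (outer positive region collapsing onto $\partial B_R$) is the delicate case, since the Sobolev--Nehari argument cannot be replayed on $v^+$: its support still contains the inner positive annulus. For it, I would invoke the fractional Pohozaev identity for radial solutions, which couples $\int_{B_R}|v_{s,\varepsilon}|^{2^*_s-\varepsilon}$ with the squared boundary trace $\mu_{s,\varepsilon}^2$, where $\mu_{s,\varepsilon} := (v_{s,\varepsilon}/\delta^{s})|_{\partial B_R}$: continuity in $s$ of the right-hand side yields $\mu_{s_k,\varepsilon}^2 \to \mu_{\sigma,\varepsilon}^2 > 0$, while $\mu_{s_k,\varepsilon} > 0$ (since $v_{s_k,\varepsilon} > 0$ in the outer region) and $\mu_{\sigma,\varepsilon} < 0$ (since $v_{\sigma,\varepsilon}$ is strictly negative in a one-sided neighborhood of $\partial B_R$ by Lemma \ref{L:nodalchar}, while Pohozaev forces $\mu_{\sigma,\varepsilon}\neq 0$). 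Combined with the $C^{0,\alpha}$-continuity of $v_{s,\varepsilon}/\delta^{s}$ up to $\partial B_R$, one has $\mu_{s_k,\varepsilon} \to \mu_{\sigma,\varepsilon}$ in sign, a contradiction. Once all three degenerate configurations are excluded, $0 < r^1_\ast < r^2_\ast < R$ and, by Lemma \ref{L:nodalchar} together with Lemma \ref{changebound}, $v_{\sigma,\varepsilon}$ changes sign exactly twice, so $\sigma \in A_2$. The main obstacle is the third case: handling it cleanly is precisely where the boundary regularity from \cite[Theorem 1.2]{RosSer} and the fractional Pohozaev identity -- the key ingredients emphasized in the introduction -- become essential.
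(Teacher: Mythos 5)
Your proposal is correct and follows essentially the same topological strategy as the paper: use Lemma \ref{L:oncenears} to seed the once-sign-changing set near $s=1$, then show by a connectedness argument that the twice-sign-changing set is empty, with its closedness being the delicate point, established by ruling out the three degenerate limiting node configurations $r^1_\ast=0$, $r^1_\ast=r^2_\ast$, $r^2_\ast=R$ via Lemma \ref{L:nozeroorigin}, a Nehari/Sobolev estimate, and boundary regularity plus the Pohozaev identity, respectively. A few remarks on where your sub-arguments diverge from the paper's. \emph{(i)} You partition $[s_0,1)$ rather than fixing a compact interval $[s_0,s_1]$ with $\bar s < s_1 < 1$ as the paper does; your argument remains sound because Lemma \ref{L:oncenears} forces $A_2\subset[s_0,\bar s]$, so no accumulation point approaches $1$, but this should be said explicitly since the uniform Ros-Oton--Serra boundary estimate degenerates as $s\to 1^-$. \emph{(ii)} For the collapsing middle annulus, the paper directly bounds $|v^-_{s_k,\varepsilon}|^p_p \leq C|v_{s_k,\varepsilon}|^p_\infty\bigl((r''_k)^n-(r'_k)^n\bigr)\to 0$ and contradicts the Nehari/Sobolev lower bound $|v^-_{s_k,\varepsilon}|_p^{p-2}> C^\varepsilon S_{s_k}$; your H\"older-interpolation route (forcing $|v^-|_{2^*_{s_k}}\to\infty$ while it must tend to $0$) is equivalent and equally valid. \emph{(iii)} For the outer collapse, the paper argues by magnitude: $v_{s_k,\varepsilon}(r''_k)=0$ together with the uniform $C^{0,\alpha}$ bound on $v_{s_k,\varepsilon}/\delta^{s_k}$ gives $v_{s_k,\varepsilon}/\delta^{s_k}(R)\to 0$, so the Pohozaev identity yields $|v_{s_k,\varepsilon}|^{2^*_{s_k}-\varepsilon}_{2^*_{s_k}-\varepsilon}\to 0$, contradicting Nehari. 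Your sign-based variant (positive boundary ratio along the sequence versus negative ratio in the limit, both nonzero by Pohozaev) is also fine, but it additionally requires that $v_{s_k,\varepsilon}/\delta^{s_k}\to v_{\sigma,\varepsilon}/\delta^{\sigma}$ uniformly on $\overline{B_R}$, a statement you assert as ``$C^{0,\alpha}$-continuity up to $\partial B_R$'' without proof; it does follow from the uniform H\"older bound on the ratio plus interior pointwise convergence via Arzel\`a--Ascoli, but the paper's magnitude argument sidesteps the need to identify the limit of the boundary ratio and is slightly leaner.
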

\begin{proof}
Let us fix $s_0\in (0,1)$, and let $ \varepsilon_0>0$ be the number given by Lemma \ref{L:oncenears}. Let us fix $\ep \in (0,\ep_0)$ and observe that, in view of Lemma \ref{L:oncenears}, there exists $\bar s \in (0,1)$ such that for any $s \in (\bar s, 1)$, every least energy radial sign-changing solution to \eqref{fracradsubcrit} changes sign only once. Let us fix $s_1 \in (\bar s, 1)$, let $\mathcal{A}$ be as in the statement, and set
\[
\mathfrak{S}_{\varepsilon} := \{s \in [s_0, s_1] \ ; \ v_{s, \varepsilon}  \text{ changes sign exactly once}\}.
\]

In view of the previous disccusion $\mathfrak{S}_{\varepsilon}$ is not empty. We claim that $\mathfrak{S}_{\varepsilon}$ is closed. 

Indeed, let $(s_k)_k \subset \mathfrak{S}_{\varepsilon}$ be a sequence such that $s_k \to \sigma$, for some $\sigma \in [s_0,s_1]$, and consider the associated sequence $(v_{s_k, \varepsilon})_k\subset \mathcal{A}$. By Lemma \ref{L:convstoone} and thanks to Proposition \ref{P:contenermap}, up to a subsequence, we have $v_{s_k, \varepsilon} \to u_\ep$ in $C^{0, \alpha}(\overline{B_R})$ for some $\alpha \in(0,s_0)$, where $u_\ep \in X^\sigma_0(B_R)$ is a least energy nodal radial solution of \eqref{fracradsubcrit} with $s=\sigma$. In particular, $v_{s_k, \varepsilon} \to u_\ep$ in $L^2(B_R)$ and, since we are assuming that $\mathcal{A}$ is $L^2(B_R)$-continuous, it holds that $u_\ep = v_{\sigma, \varepsilon} \in \mathcal{A}$. Now, taking into account Lemma \ref{L:nodalchar}, since $v_{s_k, \varepsilon} \to u_\ep$ in $C^{0, \alpha}(\overline{B_R})$ and  $v_{s_k, \varepsilon}$ changes sign once for all $k$, we infer that the only possibility is that $v_{\sigma, \varepsilon}$ changes sign only once. Hence $\sigma \in \mathfrak{S}_{\varepsilon}$, and the claim is proved.
\medskip

We claim that $\mathfrak{S}_{s_0, \varepsilon}$ is open. To prove the claim we show that the complementary set $\mathfrak{S}_{\varepsilon}^\mathsf{c}$ is closed. By definition and thanks to Lemma \ref{L:nodalchar} we have
\[
\mathfrak{S}_{\varepsilon}^\complement = \{s \in [s_0, s_1] \ ; \ v_{s, \varepsilon} \text{ changes sign exactly twice}\}.\] 
Let $(s_k)_k \subset \mathfrak{S}_{\varepsilon}^\complement$ be such that $s_k \to \sigma$ for some $\sigma \in[s_0,s_1]$, as $k \to +\infty$. Arguing as before, up to a subsequence, we get that $v_{s_k, \varepsilon} \to v_{\sigma, \varepsilon}$ in $C^{0, \alpha}(\overline{B_R})$, for $\alpha \in (0,s_0)$.

Let us denote by  $0<r_k'<r_k''<R$ the nodes of $v_{s_k, \varepsilon}(r) = v_{s_k, \varepsilon}(x)$, $|x| = r$. 
We observe that $r'_k \not\to 0$. Indeed, if $r'_k \to 0$, as $k \to +\infty$, as a consequence of the $C^{0,\alpha}$-convergence we infer that $v_{\sigma, \varepsilon}(0) = 0$. But this contradicts Lemma \ref{L:nodalchar}, and we are done.

Secondly, we claim that $r'_k - r''_k \not\to 0$. Indeed, assume by contradiction that $r'_k - r''_k \to 0$. Thanks to Lemma \ref{L:linfbound} we get that
\begin{equation}\label{eq:contrfin}
\begin{aligned}
|v_{s_k, \varepsilon}^-|^{2^*_{s_k}-\varepsilon}_{2^*_{s_k}-\varepsilon}&=\int_{B_{r''_k}\setminus B_{r'_k}}|v_{s_k,\varepsilon}^-|^{2^*_{s_k}-\varepsilon}\de x\\
 &\leq C|v_{s_k, \varepsilon}|^{2^*_{s_k}-\varepsilon}_\infty\int_{r'_k}^{r''_k}\rho^{n-1}\de \rho \leq C\left( (r''_k)^n-(r_k')^n\right) \to 0.
\end{aligned}
\end{equation}
On the other hand, since $v_{s_k, \varepsilon} \in \mathcal{M}^r_{s_k, \varepsilon}(B_R)$ and  thanks to Lemma \ref{AsintPos} we find a constant $C>0$ independent on $k$ such that
\begin{equation}\label{technicality}
C^\varepsilon S_{s_k} \leq S_{s_k, \varepsilon} \leq \frac{\|v^-_{s_k, \varepsilon}\|^2_{s_k}}{|v_{s_k, \varepsilon}^-|^{2}_{2^*_{s_k}- \varepsilon}} < |v_{s_k, \varepsilon}|_{2^*_{s_k}-\varepsilon}^{2^*_{s_k}-2-\varepsilon}.
\end{equation}
Hence, $ |v_{s_k, \varepsilon}|_{2^*_{s_k}-\varepsilon}^{2^*_{s_k}-2-\varepsilon}$ is bounded away from zero and this contradicts \eqref{eq:contrfin}.  

It remains to prove that $r''_k \not \to R$. To this end, we first point out that, thanks \cite[Theorem 1.2]{RosSer}, it holds
\[
\left\|\frac{v_{s_k, \varepsilon}}{\delta^{s_k}}\right\|_{C^{0,\alpha}(\overline{B_R})} \leq C|v_{s_k, \varepsilon}|^{2^*_{s_k}-1-\varepsilon}_\infty,
\] 
where $0<\alpha < \min\{s, 1-s\}$,  
\[
\delta^s(x) := d(x, \partial B_R))^s = (R- |x|)^s \quad \text{ and }\quad \frac{v_{s_k, \varepsilon}}{\delta^{s_k}}(R) := \lim_{\tau \to 0}\frac{v_{s_k, \varepsilon}(R-\tau)}{\delta^{s_k}(R-\tau)}.
\]
A careful analysis of the proof shows that the constant $C>0$ is uniform for $s \in [s_0,s_1]$ because $s_1$ is strictly less than one. Moreover we can fix $\alpha$ by choosing $0<\alpha < \min \{s_0, 1-s_1\}$. 

Assume now by contradiction that $R- r''_k \to 0$ as $k \to +\infty$. Since $v_{s_k, \varepsilon}(r''_k) = 0$, using the previous estimate and Lemma \ref{L:linfbound}, we have
\begin{equation}\label{Eq:zerobound}
\left|\frac{v_{s_k, \varepsilon}}{\delta^{s_k}}(R)\right| = \left|\frac{v_{s_k, \varepsilon}}{\delta^{s_k}}(R)- \frac{v_{s_k, \varepsilon}(r''_k)}{\delta^{s_k}(r''_k)}\right|\leq C|R- r''_k|^\alpha \to 0.
\end{equation}
On the other hand, applying the fractional Pohozaev identity (see \cite{RosSerrI}) to \eqref{fracradsubcrit} we get that
\[
\frac{2n- (n- 2s_k)(2^*_{s_k}- \varepsilon)}{2^*_{s_k}- \varepsilon}|v_{s_k, \varepsilon}|^{2^*_{s_k}- \varepsilon}_{2^*_{s_k}- \varepsilon} = \Gamma(1+s_k)^2 R |\partial B_R|\left|\frac{u_{s_k, \varepsilon}}{\delta^{s_k}}(R)\right|^2
\]
which, together with \eqref{Eq:zerobound}, implies that $|v_{s_k, \varepsilon}|^{2^*_{s_k}- \varepsilon}_{2^*_{s_k}- \varepsilon} \to 0$ as $s_k \to \sigma$, thus contradicting \eqref{technicality}.

From this discussion it follows that $r'_k$ and $r''_k$ definitely stay in the interior of the domain, away from the origin and their distance does not tend to zero. Thanks to the $C^{0,\alpha}$-convergence and by Lemma \ref{L:nodalchar} we infer that also $v_{\sigma, \varepsilon}$ changes sign exactly twice. Hence $\sigma \in \mathfrak{S}_{\varepsilon}^C$, and thus $\mathfrak{S}_{\varepsilon}^C$ is a closed set. At the end, $ \mathfrak{S}_{\varepsilon}$ is not empty, it is both open and closed, and thus $ \mathfrak{S}_{\varepsilon}=[s_0,s_1]$. Since by construction $s_1>\bar s$ we conclude that every element of $\mathcal{A}$ changes sign exactly once. The proof is complete.
\end{proof}

\end{section}

\end{document}